\newcommand{\defi}[1]{{\upshape\sffamily #1}}
\renewcommand{\a}{\alpha}
\renewcommand{\b}{\beta}
\newcommand{\bw}{\bigwedge}
\renewcommand{\det}{\textrm{det}}
\renewcommand{\ll}{\lambda}
\newcommand{\oo}{\otimes}
\newcommand{\pd}{\partial}
\newcommand{\lie}{\mathfrak{g}}
\renewcommand{\S}{\mathbb{S}}
\newcommand{\D}{\mathcal{D}}
\newcommand{\I}{\mathcal{I}}
\newcommand{\Q}{\mathcal{Q}}
\newcommand{\sI}{\mathscr{I}}
\newcommand{\sP}{\mathscr{P}}
\newcommand{\sS}{\mathscr{S}}
\newcommand{\sV}{\mathscr{V}}
\DeclareMathOperator{\ShHom}{\mathscr{H}\text{\kern -3pt {\calligra\large om}}\,}
\DeclareMathOperator{\ShExt}{\mathscr{E}\text{\kern -2pt {\calligra\large xt}}\,}
\newcommand{\Aut}{\operatorname{Aut}}
\newcommand{\Ext}{\operatorname{Ext}}
\newcommand{\GL}{\operatorname{GL}}
\newcommand{\Hom}{\operatorname{Hom}}
\newcommand{\SL}{\operatorname{SL}}
\newcommand{\Sym}{\operatorname{Sym}}
\newcommand{\coker}{\operatorname{coker}}
\renewcommand{\det}{\operatorname{det}}
\renewcommand{\ker}{\operatorname{ker}}
\newcommand{\opmod}{\operatorname{mod}}
\newcommand{\opMod}{\operatorname{Mod}}
\newcommand{\rep}{\operatorname{rep}}
\newcommand{\bb}[1]{\mathbb{#1}}
\renewcommand{\rm}[1]{\textrm{#1}}
\newcommand{\mc}[1]{\mathcal{#1}}
\newcommand{\ol}[1]{\overline{#1}}
\newcommand{\tl}[1]{\tilde{#1}}
\def\lra{\longrightarrow}
\def\llra{\longleftrightarrow}
\def\hra{\hookrightarrow}
\newtheorem{theorem}{Theorem}[section]
\newtheorem*{theorem*}{Theorem}
\newtheorem*{problem*}{Problem}
\newtheorem{lemma}[theorem]{Lemma}
\newtheorem{corollary}[theorem]{Corollary}
\newtheorem*{corollary*}{Corollary}
\newtheorem*{thm-simples*}{Theorem on Simple $\D$-modules}
\newtheorem*{thm-category*}{Theorem on the Category of Equivariant $\D$-modules}
\newtheorem*{thm-iterated*}{Theorem on Iterated Local Cohomology Modules}
\theoremstyle{definition}
\newtheorem*{definition*}{Definition}
\newtheorem{example}[theorem]{Example}
\theoremstyle{remark}
\newtheorem{remark}[theorem]{Remark}
\newtheorem*{remark*}{Remark}
\numberwithin{equation}{section}
\begin{document}

\title{Equivariant $\D$-modules on binary cubic forms}

\author{Andr\'as C. L\H{o}rincz}
\address{Department of Mathematics, Purdue University, West Lafayette, IN 47907}
\email{alorincz@purdue.edu}

\author{Claudiu Raicu}
\address{Department of Mathematics, University of Notre Dame, 255 Hurley, Notre Dame, IN 46556\newline
\indent Institute of Mathematics ``Simion Stoilow'' of the Romanian Academy}
\email{craicu@nd.edu}

\author{Jerzy Weyman}
\address{Department of Mathematics, University of Connecticut, Storrs, CT 06269}
\email{jerzy.weyman@uconn.edu}

\subjclass[2010]{Primary 13D45, 14F10, 16G20}

\date{\today}

\keywords{Binary forms, equivariant $\D$-modules, quivers, local cohomology}

\begin{abstract} 
 We consider the space $X=\Sym^3\bb{C}^2$ of binary cubic forms, equipped with the natural action of the group $\GL_2$ of invertible linear transformations of $\bb{C}^2$. We describe explicitly the category of $\GL_2$-equivariant coherent $\D_X$-modules as the category of representations of a quiver with relations. We show moreover that this quiver is of tame representation type and we classify its indecomposable representations. We also give a construction of the simple equivariant $\D_X$-modules (of which there are $14$), and give formulas for the characters of their underlying $\GL_2$-representations. We conclude the article with an explicit calculation of (iterated) local cohomology groups with supports given by orbit closures.
\end{abstract}

\maketitle

\section{Introduction}\label{sec:intro}

Let $W=\bb{C}^2$ denote a $2$-dimensional complex vector space, and let $X = \Sym^3 W$ denote the corresponding space of binary cubic forms. There is a natural action of the group $\GL(W) = \GL_2(\bb{C})$ (which we'll simply denote by $\GL$) on the space $X$, with four orbits:
\begin{itemize}
 \item The zero orbit $O_0 = \{0\}$.
 \item The orbit $O_2 = \{ l^3 : 0\neq l \in W\}$ of cubes of linear forms, whose closure $\ol{O_2}$ is the affine cone over the twisted cubic curve.
 \item The orbit $O_3 = \{ l_1^2 \cdot l_2 : 0 \neq l_1,l_2 \in W \mbox{ distinct up to scaling}\}$ whose closure $\ol{{O}_3}$ is the hypersurface defined by the vanishing of the cubic discriminant (and it is also the affine cone over the tangential variety to the twisted cubic curve).
 \item The dense orbit $O_4 = \{ l_1 \cdot l_2 \cdot l_3 : 0\neq l_1,l_2,l_3 \in W \mbox{ distinct up to scaling}\}$.
\end{itemize}
Note that our indexing of the orbits is chosen so that $O_i$ has dimension $i$ for $i=0,2,3,4$. Letting $\D = \D_X$ denote the Weyl algebra of differential operators on $X$ with polynomial coefficients, our goal is to completely classify and give concrete constructions of the simple $\GL$-equivariant $\D$-modules, as well as to give an explicit quiver description of the category of equivariant coherent $\D$-modules and to analyze its structure.

By the Riemann--Hilbert correspondence, the simple equivariant $\D$-modules are known to be indexed by irreducible equivariant local systems on the orbits of the group action, but their explicit realization is in general difficult to obtain (see Open Problem 3 in \cite[Section~6]{mac-vil}). In the case of binary cubic forms the classification is summarized as follows. 

\begin{thm-simples*}
 There exist 14 simple $\GL$-equivariant $\D$-modules on $X=\Sym^3 W$, which can be classified according to their support as follows:
 \begin{itemize}
  \item $6$ of the modules, denoted $G_{-1},G_0=S,G_1,G_2,G_3,G_4$, have full support and correspond to irreducible rank one equivariant local systems on $O_4$.
  \item $3$ of the modules, denoted $Q_0,Q_1,Q_2$, have full support and correspond to irreducible rank two equivariant local systems on $O_4$.
  \item one of the modules, denoted $P$, has support $\ol{O_3}$.
  \item $3$ of the modules, denoted $D_0,D_1,D_2$, have support $\ol{O_2}$.
  \item one of the modules, denoted $E$, has support $O_0$.
 \end{itemize}
 All the modules have a concrete description, starting with $G_0=S$ which is the affine coordinate ring of $X$. They are permuted by the Fourier transform, which fixes $P$ and $G_3$ and swaps the modules in each of the pairs
 %\begin{equation}\label{eq:Fourier-pairs}
 \[(S,E),\ (G_{-1},D_1),\ (G_1,D_2),\ (G_2,G_4),\ (Q_1,Q_2),\ (D_0,Q_0),\]
 %\end{equation}
 and are also permuted by the holonomic duality functor, which fixes $S,Q_0,P,D_0,E,G_3$ and swaps the modules in each of the pairs
 \[(D_1,D_2),\ (Q_1,Q_2),\ (G_2,G_4),\ (G_1,G_{-1}).\]
 %The non-zero local cohomology modules of $\mc{O}_X$ which are supported on orbit closures are
 %\begin{equation}\label{eq:basic-loccoh}
 %H^4_{O_0}(X,\mc{O}_X) = E,\quad H^2_{\ol{O_2}}(X,\mc{O}_X) = D_0,\quad H^0_{\ol{O_4}}(X,\mc{O}_X) = G_0
 %\end{equation}
 %and $H^1_{\ol{O_3}}(X,\mc{O}_X)$ which has $\D$-module length two and is realized as an extension of $E$ by $P$.
\end{thm-simples*}

We will analyze in more detail each of the $14$ simple modules in the theorem above, but before doing so we discuss the category $\opmod_{\GL}(\D_X)$ of $\GL$-equivariant coherent $\D_X$-modules. It is known quite generally that categories of $\D$-modules (or perverse sheaves) admit a quiver interpretation \cites{gel-mac-vil,vilonen,lor-wal}, but explicit descriptions of such categories are hard to come by \cites{galligo-granger-1,galligo-granger-2,mac-vil-cusp,braden-grinberg,lor-wal}. In the case of binary cubic forms we have the following.

\begin{thm-category*}
There is an equivalence of categories 
\[\opmod_{\GL}(\D_X) \simeq \rep(\Q,\I),\]
where $\rep(\Q,\I)$ is the category of finite-dimensional representations of a quiver $\Q$ with relations $\I$, described as follows. The vertices and arrows of the quiver $\Q$ are depicted in the diagram
\[\xymatrix@=2.3pc@L=0.2pc{
s \ar@<0.5ex>[dr]^{\alpha_1} & & d_0 \ar@<0.5ex>[dl]^{\alpha_2}  & & & g_{1} \ar@<0.5ex>[rr]^{\gamma_1} & & d_1 \ar@<0.5ex>[ll]^{\delta_1} & \\
 & p \ar@<0.5ex>[ul]^{\beta_1} \ar@<0.5ex>[ur]^{\beta_2}\ar@<0.5ex>[dl]^{\beta_4} \ar@<0.5ex>[dr]^{\beta_3} & & & \overset{q_1}{\bullet} & \overset{q_2}{\bullet} & \overset{g_2}{\bullet} & \overset{g_3}{\bullet} & \overset{g_4}{\bullet} \\
q_0 \ar@<0.5ex>[ur]^{\alpha_4} & & e \ar@<0.5ex>[ul]^{\alpha_3} & & & g_{-1} \ar@<0.5ex>[rr]^{\gamma_{-1}} & & d_2 \ar@<0.5ex>[ll]^{\delta_{-1}} &
}\]
and the set of relations $\I$ is given by all 2-cycles and all non-diagonal compositions of two arrows:
\[\alpha_i \beta_i \mbox{ and } \beta_i\alpha_i \mbox{ for } i=1,2,3,4,\quad \gamma_i\delta_i\mbox{ and }\delta_i\gamma_i\mbox{ for }i=1,-1,\mbox{ and}\]
\[\alpha_1 \beta_2, \alpha_1\beta_4 , \alpha_2\beta_1, \alpha_2\beta_3, \alpha_3\beta_2,\alpha_3\beta_4,\alpha_4\beta_1,\alpha_4\beta_3.\]
Moreover, the bijection between the simple $\GL$-equivariant $\D_X$-modules and the nodes of the above quiver is given by replacing upper case symbols with the corresponding lower case symbols.
\end{thm-category*}

Once a quiver description of the category $\opmod_{\GL}(\D_X)$ is given, there are powerful tools available to understand its structure, such as Auslander--Reiten theory or tilting theory. We give an indication of this at the end of Section~\ref{sec:category-modDx} without going into the details of the theory. Instead we focus on proving that the quiver of $\opmod_{\GL}(\D_X)$ is of tame representation type, and to classify its indecomposable objects by relating them to the indecomposables of the extended Dynkin quiver of type $\hat{D}_4$. These latter indecomposables are well-understood, as their classification constitutes the solution of the celebrated \defi{four subspace problem} (see \cite{gel-pon} and \cite[Section XIII.3]{sim-sko2}).

In commutative algebra, the importance of the study of equivariant $\D$-modules comes from the desire to understand local cohomology modules with support in orbit closures \cites{raicu-weyman-witt,raicu-weyman,raicu-weyman-loccoh,raicu-veronese}. Our investigations owe a great deal to the work of Gennady Lyubeznik, who pioneered the use of $\D$-module techniques in commutative algebra. In the seminal paper \cite{lyubeznik}, Lyubeznik studies iterated local cohomology groups with respect to general families of supports, which lead to invariants known to be notoriously hard to compute in concrete examples. We show in Section~\ref{sec:loccoh} how basic knowledge of the structure of $\opmod_{\GL}(\D_X)$ can help to compute explicitly local cohomology groups associated with the space of binary cubics.

What is missing (and we promised to clarify) from the statements of the theorems above is the concrete construction of the simple equivariant $\D$-modules. In order to achieve this, we begin by setting up some notation. We let $V = W^*$ denote the dual vector space to $V$ so that $\Sym^3 V$ is naturally identified with the space of linear forms on $X$, and let $S = \Sym(\Sym^3 V)$ denote the ring of polynomial functions on $X$. If $\{w_0,w_1\}$ is a basis for $W$ then $\{w_0^3,3w_0^2 w_1,3w_0 w_1^2, w_1^3\}$ is a basis for $\Sym^3 W$. Choosing $\{x_0,x_1,x_2,x_3\}$ to be the dual basis of $\Sym^3 V$ we obtain an identification $S = \bb{C}[x_0,x_1,x_2,x_3]$. The condition for a cubic form 
\begin{equation}\label{eq:generic-cubic}
 f = x_0\cdot w_0^3 + 3x_1 \cdot w_0^2 w_1 + 3x_2 \cdot w_0 w_1^2 + x_3\cdot w_1^3
\end{equation}
to be in $\ol{O_2}$ is equivalent to the vanishing of the $2\times 2$ minors of 
\[ A=\begin{bmatrix}
 x_0 & x_1 & x_2 \\
 x_1 & x_2 & x_3 \\
\end{bmatrix}\]
while the condition that $f\in \ol{O_3}$ is equivalent to the vanishing of the discriminant
\begin{equation}\label{eq:discriminant}
 \Delta = 3 x_1^2 x_2^2 - 4x_0 x_2^3 - 4 x_1^3 x_3  - x_0^2 x_3^2 + 6 x_0x_1x_2x_3.
\end{equation}
We write $S_{\Delta}$ for the localization of $S$ at $\Delta$ and for every integer $i\in\bb{Z}$ we define
\begin{equation}\label{eq:Fi}
 F_i = S_{\Delta} \cdot \Delta^{i/6}
\end{equation} 
which is a $\D$-module. Moreover, we have $F_i = F_j$ if and only if $i-j$ is divisible by $6$. For $i=-1,0,1,2,3,4$ we define $G_i$ to be the $\D$-submodule of $F_i$ generated by $\Delta^{i/6}$, and part of the Theorem on Simple $\D$-modules is the fact that each $G_i$ is a simple $\D$-module. It is easy to see that $G_0=S$ is simple, and we will show later that in fact $G_i = F_i$ for $i=2,3,4$. Moreover, $F_1$ and $F_{-1}$ have $\D$-module length two so that the quotients $F_1/G_1$ and $F_{-1}/G_{-1}$ are simple $\D$-modules, which we label $D_1$ and $D_2$ respectively.

The modules $D_0$ and $E$ arise as local cohomology modules, for instance $D_0 = H^2_{\ol{O_2}}(S)$ and $E = H^4_{O_0}(S)$, while in order to construct $P$ we proceed as follows. The $\D$-module $F_0 = S_{\Delta}$ is generated by $\Delta^{-1}$, and therefore the same is true about the quotient $S_{\Delta}/S$. There exist a surjective $\D$-module map
\[ \pi: S_{\Delta}/S \lra E\]
sending $\Delta^{-1}$ to the socle generator of $E$, and $P$ can be realized as $\ker(\pi)$.

To construct the remaining modules $Q_0,Q_1,Q_2$ we use the Fourier transform, which may be defined as follows. If we write $\D = S \langle \pd_0,\pd_1,\pd_2,\pd_3\rangle$ with $\pd_i = \frac{\pd}{\pd x_i}$, then for every $\D$-module $M$ its \defi{Fourier transform $\mc{F}(M)$} is the $\D$-module with the same underlying abelian group, but with the \emph{new} action of the generators $x_i,\pd_i$ of $\D$ given in terms of the \emph{old} action by
\[ x_i \cdot^{new} m = - \pd_i \cdot^{old} m,\quad \pd_i \cdot^{new} m = x_i \cdot^{old} m\mbox{ for every }m\in M.\]
We use this definition only for the purpose of this Introduction, but in order to preserve equivariance more care is needed in defining the Fourier transform, as we explain in Section~\ref{subsec:Dmods}. The most basic example of Fourier transform is exhibited by the identification of $E$ with $\mc{F}(S)$, but this extends to other pairs of simple $\D$-modules as indicated in our first theorem. We have $Q_0 = \mc{F}(D_0)$ and
\[ Q_1 = (Q_0)_{\Delta} \oo_{S_{\Delta}} F_2 = (Q_0)_{\Delta} \cdot \Delta^{1/3},\quad Q_2 = (Q_0)_{\Delta} \oo_{S_{\Delta}} F_4 = (Q_0)_{\Delta} \cdot \Delta^{2/3},\]
finalizing the list of simple objects in $\opmod_{\GL}(\D_X)$.

The article is organized as follows. In Section~\ref{sec:prelim} we record some preliminaries and basic notation regarding $\GL$-representations, (equivariant) $\D$-modules, and quivers. In Section~\ref{sec:simple-Dmods} we analyze the simple $\GL$-equivariant $\D$-modules on the space of binary cubic forms, and describe their characters. In Section~\ref{sec:category-modDx} we study the category of equivariant $\D$-modules by providing its quiver description and classifying its indecomposable objects. We finish with a number of explicit computations of local cohomology modules in Section~\ref{sec:loccoh}.

\section{Preliminaries}\label{sec:prelim}

\subsection{Admissible representations and their characters}\label{subsec:repthy}

Throughout this paper we let $W$ denote a complex vector space of dimension two, we let $V = W^{\vee}$ be its dual, and write $\GL$ for the group $\GL(W)\simeq\GL_2(\bb{C})\simeq\GL(V)$ of invertible linear transformations of $W$. We let $\Lambda$ denote the set of (isomorphism classes of) finite dimensional irreducible $\GL$-representations, which are classified by \defi{dominant weights} $\ll=(\ll_1,\ll_2)\in \bb{Z}^2$, $\ll_1\geq\ll_2$. Concretely, we write $\S_{\ll}V$ for the irreducible $\GL$-representation corresponding to $\ll$, which is determined by the following conventions:
\begin{itemize}
 \item If $\ll=(0,0)$ then $\S_{\ll}V = \bb{C}$ with the trivial $\GL$-action.
 \item If $\ll=(a,0)$ with $a\geq 0$ then $\S_{\ll}V = \Sym^a V$ is the space of homogeneous degree $a$ polynomial functions on $W$.
 \item For every dominant $\ll=(\ll_1,\ll_2)$ we have $\S_{(\ll_1+1,\ll_2+1)}V = \S_{\ll}V \oo  \bw^2 V$, and in particular if $\ll=(1,1)$ then $\S_{(1,1)}V = \bw^2 V$. We will often refer to $\bw^2 V$ as the \defi{determinant of $V$} and denote it $\det(V)$.
\end{itemize}
 More generally, if $U$ is any $k$-dimensional vector space (or a $\GL$-representation), we define the \defi{determinant of $U$} to be $\det(U) = \bw^k U$, so for instance we get
 \begin{equation}\label{eq:det-sym}
 \det(\Sym^a V) = \bw^{a+1}\Sym^a V = \S_{(b,b)}V\mbox{ where }b=\frac{a(a+1)}{2}.
 \end{equation}
We write $\S_{\ll}W$ for the $\GL$-representation dual to $\bb{S}_{\ll}V$, so that if we let $\ll^{\vee} = (-\ll_2,-\ll_1)$ then
\[ \S_{\ll}W = \Hom_{\bb{C}}(\S_{\ll}V,\bb{C}) = \S_{\ll^{\vee}}V.\]
We next consider \defi{admissible $\GL$-representations} to be representations that decompose into a (possibly infinite) direct sum
\begin{equation}\label{eq:admissible}
 M = \bigoplus_{\ll} (\S_{\ll}V)^{\oplus a_{\ll}},\mbox{ where }a_{\ll}\geq 0.
\end{equation}
The \defi{Grothendieck group $\Gamma(\GL)$ of virtual admissible $\GL$-representations} is defined as a direct product of copies of $\bb{Z}$ indexed by the set $\Lambda$
\[ \Gamma(\GL) = \bb{Z}^{\Lambda} = \{\mbox{maps } f: \Lambda \lra \bb{Z} \}.\]
We will often represent elements of $\Gamma(\GL)$ as formal sums
\begin{equation}\label{eq:formalsum}
 a=\sum_{\ll} a_{\ll} \cdot e^{\ll}
\end{equation}
where $e^{\ll}$ corresponds to $\S_{\ll}V$ and $a_{\ll}\in\bb{Z}$ is the \defi{multipicity} of $\S_{\ll}V$. The correspondence between maps $f:\Lambda\lra\bb{Z}$ and the formal sum $a$ in (\ref{eq:formalsum}) is given by $a_{\ll} = f(\ll)$ for all $\ll\in\bb{Z}^2$ dominant. Given an element $a\in\Gamma(\GL)$ as in (\ref{eq:formalsum}) we will write
\[ \langle a,e^{\ll} \rangle = a_{\ll}.\]

Besides the additive group operation, $\Gamma(\GL)$ also has a partially defined multiplication given by convolution of functions:
\begin{equation}\label{eq:convolution}
 (f\cdot g)(\ll) = \sum_{\mu+\delta = \ll} f(\mu) \cdot g(\delta)
\end{equation}
which is defined precisely when all the sums in (\ref{eq:convolution}) involve finitely many non-zero terms. Using this multiplication, we can make sense of inverting some virtual representations such as $(1-e^{\ll})$:
\[ \frac{1}{1-e^{\ll}} = 1 + e^{\ll} + e^{2\ll} + \cdots\]
Given an admissible representation $M$ as in (\ref{eq:admissible}) we write $[M]$ for the class of $M$ in $\Gamma(\GL)$. For example, when $\Sym(V) = \bigoplus_{i\geq 0}\Sym^i V$ is the ring of polynomial functions on $W$ then $[\Sym(V)] = 1/(1-e^{(1,0)})$. The following will be important in our study of $\D$-modules on binary cubics:

\begin{lemma}[{\cite[Section~6.1]{landsberg-manivel}}]\label{lem:charS} If $S = \Sym(\Sym^3 V)$ is the ring of polynomial functions on the space $X=\Sym^3 W$ of binary cubic forms then
\begin{equation}\label{eq:character-S}
[S] = \frac{1+e^{(6,3)}}{(1-e^{(3,0)})(1-e^{(4,2)})(1-e^{(6,6)})}.
\end{equation}
\end{lemma}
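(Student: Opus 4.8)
The plan is to compute the character of $S = \Sym(\Sym^3 V)$ directly from the defining generator $f$ in \eqref{eq:generic-cubic}, using the classical description of the ring of covariants of binary cubics. First I would identify a set of algebra generators for $S$ that is adapted to the $\GL$-action. The space $\Sym^3 V$ itself is the degree-one piece; the discriminant $\Delta$ from \eqref{eq:discriminant} spans a copy of $\det(V)^{\oplus 6} = \S_{(6,6)}V$ in degree $4$; the Hessian covariant of $f$ spans a copy of $\Sym^2 V \oo \det(V)^{\oplus 2} = \S_{(4,2)}V$ in degree $2$; and the Jacobian of $f$ with its Hessian (the covariant $T$) spans a copy of $\Sym^3 V \oo \det(V)^{\oplus 3} = \S_{(6,3)}V$ in degree $3$. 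The classical fact (going back to Clebsch/Gordan, see \cite[Section~6.1]{landsberg-manivel}) is that $S$ is generated by $f$, the Hessian $H$, and $T$, subject to a single relation expressing $T^2$ as a polynomial in $f$, $H$, and $\Delta$; equivalently, $S$ is a free module of rank $2$ over the polynomial subring $\bb{C}[f,H,\Delta]$ on generators $1$ and $T$.

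From this structural statement the character formula is immediate: the polynomial subring $\Sym(\Sym^3 V) \supseteq \bb{C}[f,H,\Delta]$ generated by three algebraically independent elements lying in the irreducible representations $\S_{(3,0)}V$, $\S_{(4,2)}V$, $\S_{(6,6)}V$ has character
\[
\frac{1}{(1-e^{(3,0)})(1-e^{(4,2)})(1-e^{(6,6)})},
\]
since multiplication by the three generators is $\GL$-equivariant and their powers multiply in $\Gamma(\GL)$ by convolution; and then the free rank-two module structure with second generator $T \in \S_{(6,3)}V$ multiplies this by $(1+e^{(6,3)})$, giving exactly \eqref{eq:character-S}. One subtlety to check is that $\bb{C}[f,H,\Delta]$ really is a polynomial ring (no relations among $f,H,\Delta$) and that the decomposition $S = \bb{C}[f,H,\Delta] \oplus T\cdot\bb{C}[f,H,\Delta]$ is a direct sum as $\GL$-representations — both follow from the classical invariant theory of binary cubics, but one should cite or verify that the single syzygy is precisely $T^2 \in \bb{C}[f,H,\Delta]$ with no lower-degree relations.

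The main obstacle, assuming one does not simply quote \cite[Section~6.1]{landsberg-manivel} wholesale, is pinning down the exact weights: one must verify that $H$ transforms as $\S_{(4,2)}V$ (not merely $\Sym^2 V$ up to a determinant twist to be determined) and similarly that $T$ is $\S_{(6,3)}V$ and $\Delta$ is $\S_{(6,6)}V$. This is a matter of tracking how many factors of $\det(V) = \bw^2 V$ appear, which one can fix by computing the action of scalar matrices (i.e. the total degree grading): $H$ has polynomial degree $2$ in the coefficients $x_i$, and each $x_i$ lives in $\Sym^3 V$, so a scalar $c\cdot\mathrm{id}$ acts on $H$ by $c^6$; since $\Sym^2 V$ carries weight-sum $2$ and $\det(V)$ weight-sum $2$, writing $H\in \Sym^2 V\oo\det(V)^{\oplus k}$ forces $2 + 2k = 6$, hence $k=2$ and $\S_{(2+2,0+2)}V = \S_{(4,2)}V$. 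The analogous scalar-weight bookkeeping gives $\Delta\in\S_{(6,6)}V$ (degree $4$, so weight-sum $12 = 2b$ with $b=6$) and $T\in\S_{(6,3)}V$ (degree $3$, weight-sum $9$, and base representation $\Sym^3 V$ of weight-sum $3$, so $3+2k=9$, $k=3$). Once these weights are fixed and the free-module structure is in hand, no further computation is needed.
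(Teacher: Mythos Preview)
The paper gives no proof of this lemma; it simply cites \cite[Section~6.1]{landsberg-manivel}. Your approach through the classical structure of covariants of the binary cubic is precisely what underlies that reference, and the weight bookkeeping in your last paragraph is correct.

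There is, however, a genuine conflation that must be fixed before the argument goes through. The objects $f$, $H$, $T$ are not elements of $S$: the generic form $f$ lies in $S_1\oo\Sym^3 W$, the Hessian in $S_2\oo\Sym^2 W$, the Jacobian in $S_3\oo\Sym^3 W$. They are elements of the covariant ring $C=\bigoplus_{d,m}(S_d\oo\Sym^m W)^{\SL}$, and it is $C$---not $S$---that is generated by $f,H,T,\Delta$ with one syzygy and that is free of rank two over $\bb{C}[f,H,\Delta]$. Your key sentence ``the polynomial subring $\bb{C}[f,H,\Delta]\subset S$ has character $1/\bigl((1-e^{(3,0)})(1-e^{(4,2)})(1-e^{(6,6)})\bigr)$'' is where the argument breaks as written: such a product of geometric series in $\Gamma(\GL)$ records a graded algebra on three \emph{one-dimensional} weight spaces, which is exactly what $\bb{C}[f,H,\Delta]\subset C$ is, but no such subring of $S$ exists (indeed $S$ is already generated by its four-dimensional degree-one piece). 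The repair is to keep $S$ and $C$ separate and observe that $C$ is isomorphic to the algebra $S^U$ of highest-weight vectors for the unipotent radical $U$ of a Borel, with a covariant of degree $d$ and order $m$ corresponding to a highest-weight vector of weight $\bigl((3d+m)/2,(3d-m)/2\bigr)$. Since $\langle[S],e^{\ll}\rangle=\dim(S^U)_{\ll}$, the torus-graded Hilbert series of $S^U\simeq C$ computes $[S]$; the monomial basis $\{f^aH^b\Delta^cT^{\epsilon}\}$ of $C$, whose elements have weights $a(3,0)+b(4,2)+c(6,6)+\epsilon(6,3)$, then yields \eqref{eq:character-S} exactly as you intended.
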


We say that a sequence $(f_n)_{n}$ of elements in $\Gamma(\GL)$ \defi{converges to $f$}, and write $\lim_{n\to\infty} f_n = f$, if for each dominant weight $\ll$ we have that the sequence of integers $(f_n(\ll))_n$ is eventually constant, equal to $f(\ll)$. A typical example of convergent sequence arises from localization: if $\Delta$ is the discriminant of the binary cubic form (see (\ref{eq:discriminant})) then it spans a one-dimensional $\GL$-representation with $[\bb{C}\cdot\Delta] = e^{(6,6)}$. We get based on Lemma~\ref{lem:charS} that
\begin{equation}\label{eq:character-Sdelta}
 [S_{\Delta}] = \lim_{n\to\infty} [\Delta^{-n}\cdot S] = \frac{1+e^{(6,3)}}{(1-e^{(3,0)})(1-e^{(4,2)})} \cdot e^{(6,6)\bb{Z}}
\end{equation}
where $e^{(6,6)\bb{Z}} = \sum_{i\in\bb{Z}} e^{(6i,6i)}$. We have more generally the following.

\begin{lemma}\label{lem:localization}
 Suppose that $M$ is a $\GL$-equivariant $S$-module which is an admissible representation, and suppose that $\Delta$ is a non-zerodivisor on $M$. If for each dominant weight $\ll$ we have that the sequence of multiplicities $\langle [M],e^{\ll+(6n,6n)}\rangle$ is eventually constant, with stable value equal to $m_{\ll}^{stab}$, then
 \[ [M_{\Delta}] = \lim_{n\to\infty} [\Delta^{-n}\cdot M] = \sum_{\ll} m_{\ll}^{stab}\cdot e^{\ll}.\]
\end{lemma}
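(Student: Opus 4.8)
The plan is to realize $M_\Delta$ as an increasing union of $\GL$-subrepresentations, each obtained from $M$ by a weight shift, and then read off multiplicities.

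Since $\Delta$ is a non-zerodivisor on $M$, multiplication by $\Delta$ on $M$ is injective, the localization map $M\to M_\Delta$ is injective, and setting $\Delta^{-n}M:=\{m/\Delta^n:m\in M\}\subseteq M_\Delta$ we obtain a chain of $\GL$-subrepresentations
\[ M=\Delta^0M\subseteq\Delta^{-1}M\subseteq\Delta^{-2}M\subseteq\cdots,\qquad M_\Delta=\bigcup_{n\geq 0}\Delta^{-n}M,\]
the inclusions coming from $m/\Delta^n=\Delta m/\Delta^{n+1}$. Writing $\bb{C}\cdot\Delta^{-1}\subseteq S_\Delta$ for the one-dimensional $\GL$-representation spanned by $\Delta^{-1}$, whose class in $\Gamma(\GL)$ is $e^{-(6,6)}$ (it is dual to $\bb{C}\cdot\Delta$), the multiplication map
\[ M\otimes_{\bb{C}}\big(\bb{C}\cdot\Delta^{-1}\big)^{\otimes n}\ \xrightarrow{\ \sim\ }\ \Delta^{-n}M,\qquad m\otimes(\Delta^{-1})^{\otimes n}\longmapsto m/\Delta^n,\]
is a $\GL$-equivariant isomorphism. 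Hence each $\Delta^{-n}M$ is again admissible, and $[\Delta^{-n}M]=e^{-(6n,6n)}\cdot[M]$ in $\Gamma(\GL)$; by the definition of the convolution product this says
\[ \big\langle[\Delta^{-n}M],e^{\ll}\big\rangle=\big\langle[M],e^{\ll+(6n,6n)}\big\rangle\qquad\text{for every dominant }\ll.\]

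Now fix a dominant weight $\ll$ and set $c_n:=\langle[\Delta^{-n}M],e^{\ll}\rangle$. Because the $\Delta^{-n}M$ form an increasing chain of subrepresentations of $M_\Delta$, the sequence $(c_n)_n$ is non-decreasing; by the last displayed identity together with the hypothesis it is moreover eventually constant with value $m_\ll^{stab}$, so $\sup_n c_n=m_\ll^{stab}<\infty$. On the other hand $\S_\ll V$ is finite-dimensional, so every $\GL$-homomorphism $\S_\ll V\to M_\Delta$ has finite-dimensional image and therefore factors through $\Delta^{-n}M$ for $n\gg 0$; consequently the multiplicity of $\S_\ll V$ in $M_\Delta$ equals $\sup_n c_n$. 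Thus $\langle[M_\Delta],e^{\ll}\rangle=m_\ll^{stab}$ for every dominant $\ll$, which shows simultaneously that $M_\Delta$ is admissible and that $[M_\Delta]=\sum_\ll m_\ll^{stab}\cdot e^{\ll}$. Finally, the displayed identity and the hypothesis show that $[\Delta^{-n}M]$ converges in $\Gamma(\GL)$ to $\sum_\ll m_\ll^{stab}\cdot e^{\ll}$ by the very definition of convergence, which yields the asserted chain of equalities. (For $M=S$ this recovers \eqref{eq:character-Sdelta}.)

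The argument is essentially formal; the one point deserving care is the passage from the $\Delta^{-n}M$ to the directed union $M_\Delta=\bigcup_n\Delta^{-n}M$ — that is, the fact that $\Hom_\GL(\S_\ll V,-)$ commutes with this filtered colimit of injective maps — together with the observation that the boundedness hidden in the hypothesis is exactly what forces each such colimit to be finite, so that admissibility survives in the limit.
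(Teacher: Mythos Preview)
Your proof is correct and follows essentially the same approach as the paper: realize $M_\Delta$ as the increasing union $\bigcup_n \Delta^{-n}M$, compute $\langle[\Delta^{-n}M],e^{\ll}\rangle=\langle[M],e^{\ll+(6n,6n)}\rangle$ via the weight of $\Delta$, and conclude by the hypothesis. You spell out a bit more carefully than the paper why the multiplicity in the union agrees with the stable value (via finite-dimensionality of $\S_\ll V$ and compatibility with the filtered colimit), but the underlying argument is the same.
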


\begin{proof}
 The condition that $\Delta$ is a non-zerodivisor on $M$ insures that $M$ embeds into $M_{\Delta}$ and that one can write $M_{\Delta} = \bigcup_{n\geq 1} \Delta^{-n}\cdot M$. It then suffices to show that for each dominant weight $\ll$ we have that $\langle \Delta^{-n}\cdot M,e^{\ll} \rangle = m_{\ll}^{stab}$ for $n\gg 0$. Since $[\bb{C}\cdot\Delta] = e^{(6,6)}$, it follows that $[\bb{C}\cdot\Delta^n] = e^{(6n,6n)}$ and therefore
 \[\langle \Delta^{-n}\cdot M,e^{\ll} \rangle = \langle M,e^{\ll}\cdot[\Delta^n] \rangle = \langle [M],e^{\ll+(6n,6n)}\rangle = m_{\ll}^{stab} \mbox{ for }n\gg 0.\qedhere\]
\end{proof}

The main examples of admissible representations in this work come from $\GL$-equivariant coherent $\D_X$-modules (see \cite[Theorem~2.4]{lor-wal},
where admissible representations are called \defi{multiplicity-finite}). If $M$ is a $\GL$-equivariant $\D_X$-module then the same is true about the localization $M_{\Delta}$, hence Lemma~\ref{lem:localization} applies. In the next section we record some more generalities on equivariant $\D$-modules.

\subsection{Equivariant $\D$-modules {\cite{hot-tak-tan}}}\label{subsec:Dmods}

Given a smooth complex algebraic variety $Y$ we denote by $\D_Y$ the sheaf of differential operators on $Y$. If $G$ is a connected affine algebraic group acting on $Y$, with Lie algebra $\lie$, we get by differentiating the $G$-action on $Y$ a map from $\lie \oo_{\bb{C}} \mc{O}_Y$ to the sheaf of algebraic vector fields on $Y$, which in turn extends to a map of sheaves of algebras $U(\lie) \oo_{\bb{C}} \mc{O}_Y \to \D_Y$, where $U(\lie)$ denotes the universal enveloping algebra of $\lie$. We will always assume that $G$ acts on $Y$ with finitely many orbits and we will be interested in the study of
\[ \opmod_G(\D_Y) = \mbox{the category of }G\mbox{-equivariant coherent }\D_Y\mbox{-modules},\]
as defined in \cite[Definition 11.5.2]{hot-tak-tan}: a $\D_Y$-module $M$ is \defi{equivariant} if we have a $\D_{G\times Y}$-isomorphism $\tau: p^*M \rightarrow m^*M$, where $p: G\times Y\to Y$ denotes the projection and $m: G\times Y\to Y$ the map defining the action, with $\tau$ satisfying the usual compatibility conditions. When $Y$ is affine, this amounts to $M$ admitting an algebraic $G$-action whose differential coincides with the $\lie$-action induced by the natural map $\lie \to \D_Y$. It follows from \cite[Proposition~3.1.2]{VdB:loccoh} that a morphism of $\D_Y$-modules between objects in $\opmod_G(\D_Y)$ is automatically $G$-equivariant, so $\opmod_G(\D_Y)$ is equivalent to a full subcategory of the category $\opmod(\D_Y)$ of all coherent $\D_Y$-modules, which is closed under taking submodules and quotients. If $G$ is semi-simple, then if follows from \cite[Proposition~1.4]{lor-wal} that this subcategory is also closed under taking extensions. Moreover, it follows from \cite[Theorem~11.6.1]{hot-tak-tan} that every module in $\opmod_G(\D_Y)$ is regular and holonomic, and via the Riemann--Hilbert correspondence $\opmod_G(\D_Y)$ is equivalent to the category of $G$-equivariant perverse sheaves on $Y$. This category was shown in \cite[Theorem 4.3]{vilonen} to be equivalent to the category of finitely generated modules over a finite dimensional $\bb{C}$-algebra, which in turn is equivalent to the category of representations of a quiver with relations (see \cite[Corollary~I.6.10,~Theorems~II.3.7~and~III.1.6]{ass-sim-sko}). A more direct, $\D$-module theoretic approach to express $\opmod_G(\D_Y)$ as a category of quiver representations is described in \cite{lor-wal}, and it is this approach that we will follow in our study. 

In what follows it will be important to consider several other categories of $\D$-modules, as well as the functors between them. If $Z\subset Y$ is a $G$-stable closed subvariety of $Y$, we define $\opmod_G^Z(\D_Y)$ to be the full subcategory of $\opmod_G(\D_Y)$ consisting of equivariant $\D_Y$-modules with support contained in $Z$. Associated to any $G$-equivariant local system $\mc{S}$ on some open subset contained in the smooth locus of $Z$ we have (see \cite[Remark~7.2.10]{hot-tak-tan})
\[\mc{L}(Z,\mc{S};Y) = \mbox{the \defi{intersection homology} }\D_Y\mbox{-module corresponding to }\mc{S}\]
which is a simple object in $\opmod_G^Z(\D_Y)$ (and hence also simple in $\opmod_G(\D_Y)$ and in $\opmod(\D_Y)$). In the case when $\mc{S}$ is the constant sheaf $\bb{C}$, we simply write $\mc{L}(Z;Y)$ for the corresponding intersection homology $\D_Y$-module. One important construction of objects in $\opmod_G^Z(\D_Y)$ comes from considering local cohomology functors $\mc{H}^i_Z(Y,\bullet)$: for each $i\geq 0$ and each $M\in\opmod_G(\D_Y)$ we have
\[ \mc{H}^i_Z(Y,M) = \mbox{the }i\mbox{-th local cohomology module of }M\mbox{ with support in }Z,\]
which is an element of $\opmod_G^Z(\D_Y)$. If we write $c = \dim(Y) - \dim(Z)$ for the codimension of $Z$ in $Y$ and take $M=\mc{O}_Y$ to be the structure sheaf of $Y$, then
\begin{itemize}
 \item $\mc{H}^i_Z(Y,\mc{O}_Y) = 0$ for all $i<c$.
 \item $\mc{H}^c_Z(Y,\mc{O}_Y)$ contains $\mc{L}(Z;Y)$ as a $\D_Y$-submodule, and the quotient $\mc{H}^c_Z(Y,\mc{O}_Y) / \mc{L}(Z;Y)$ has support contained in the singular locus of $Z$ (in particular $\mc{H}^c_Z(Y,\mc{O}_Y) = \mc{L}(Z;Y)$ when $Z$ is smooth).
\end{itemize}
More generally, if $Z' \subset Z$ is closed, and if we let $V = Z \setminus Z'$ denote the complement, then we can talk about the local cohomology functors $\mc{H}^i_V(Y,\bullet)$ with support in $V$, which are related to the usual ones via a long exact sequence for every $\D_Y$-module $M$ (see \cite[(1.2)]{lyubeznik}):
\[ \cdots \lra \mc{H}^i_{Z'}(Y,M) \lra \mc{H}^i_Z(Y,M) \lra \mc{H}^i_V(Y,M) \lra \mc{H}^{i+1}_{Z'}(Y,M) \lra \cdots\]
Moreover, if $M\in\opmod_G(\D_Y)$ then we have that $\mc{H}^i_Z(Y,M)\in\opmod_G^Z(\D_Y)$.

When $Z$ is itself smooth we can talk about various categories of $\D_Z$-modules. As a consequence of Kashiwara's equivalence \cite[Theorem~1.6.1]{hot-tak-tan} we get an equivalence of categories
\begin{equation}\label{eq:Kashiwara-equiv}
 \opmod_G(\D_Z) \simeq \opmod^Z_G(\D_Y),
\end{equation}
under which $\mc{O}_Z \in \opmod_G(\D_Z)$ corresponds to $\mc{L}(Z;Y) = \mc{H}^c_Z(Y,\mc{O}_Y) \in \opmod^Z_G(\D_Y)$.

We next consider a non-empty $G$-stable open subset $U\subset Y$, and write $j:U \to Y$ for the corresponding open immersion. The direct and inverse image functors $j_*$ and $j^*$ for quasi-coherent sheaves restrict to functors between the corresponding categories of $\D$-modules:
\begin{equation}\label{eq:adjoint-pairs}
 \opmod(\D_U) \overset{j_*}{\underset{j^*}{\rightleftarrows}} \opmod(\D_Y), \qquad \opmod_G(\D_U) \overset{j_*}{\underset{j^*}{\rightleftarrows}} \opmod_G(\D_Y), \qquad \opmod_G^{W\cap U}(\D_U) \overset{j_*}{\underset{j^*}{\rightleftarrows}} \opmod_G^W(\D_Y),
\end{equation}
where in the last pair of categories $W$ denotes a closed $G$-stable subvariety of $Y$. In each of the three cases, we have that $j_*$ is right adjoint to $j^*$, and $j^*$ is (left) exact, so $j_*$ takes injective objects to injective objects. For any $\D_Y$-module $M$, the adjunction between $j_*$ and $j^*$ gives a natural map $M \to j_* j^*M$. If we let $Z = Y\setminus U$ we get an exact sequence
\begin{equation}\label{eq:4term-loccoh}
 0 \lra \mc{H}_Z^0(Y,M) \lra M \lra j_* j^*M \lra \mc{H}_Z^1(Y,M) \lra 0
\end{equation}
and for every $k\geq 1$ we have isomorphisms (where $R^k j_*$ denotes the $k$-th derived functor of $j_*$)
\[ R^k j_*(j^* M) \simeq \mc{H}_Z^{k+1}(Y,M).\]

As a consequence of the Riemann--Hilbert correspondence and \cite[Theorem~11.6.1]{hot-tak-tan} we have the following.
\begin{theorem}\label{thm:simples}
 Suppose that $Y$ is a smooth complex algebraic variety, and that $G$ is an algebraic group acting on $Y$.
 
 (a) If $G$ acts transitively on $Y$, we identify $Y$ with $G/K$ for some algebraic subgroup $K$ of $G$ and let $H = K/K^0$ denote the component group of $K$ (where $K^0$ is the connected component of the identity element in $K$). The category $\opmod_G(\D_Y)$ is equivalent to the category of finite dimensional complex representations of $H$ and is therefore semisimple (since $H$ is finite).

 (b) If $G$ acts with finitely many orbits on $Y$ then we have a bijective correspondence
 \[\left\{(O,\mc{S}) : \begin{array}{c}
 O\mbox{ is a }G\mbox{-orbit and } \\ \mc{S}\mbox{ is an equivariant irreducible local system on }O
 \end{array}
 \right\} 
 \llra \{\mbox{simple objects of }\opmod_G(\D_Y)\}\]
 \[\mbox{given by }\qquad(O,\mc{S}) \llra \mc{L}(O,\mc{S};Y).\]
 Moreover, if we fix $O = G/K$ and let $H = K/K^0$ as in part (a), then we have a bijective correspondence
\[\{\mbox{equivariant irreducible local systems on }O\} \llra \{\mbox{irreducible representations of }H\}.\]
\end{theorem}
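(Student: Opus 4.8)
The plan is to deduce both statements from the Riemann--Hilbert correspondence. By \cite[Theorem~11.6.1]{hot-tak-tan} every object of $\opmod_G(\D_Y)$ is regular holonomic, so Riemann--Hilbert yields an exact equivalence of abelian categories between $\opmod_G(\D_Y)$ and the category $\operatorname{Perv}_G(Y)$ of $G$-equivariant perverse sheaves on $Y$ (constructible with respect to the orbit stratification, which is finite by hypothesis); moreover, under this equivalence the intersection homology $\D_Y$-module $\mc{L}(O,\mc{S};Y)$ corresponds to the intersection cohomology complex $IC_{\ol{O}}(\mc{S})$, as recorded in \cite[Remark~7.2.10]{hot-tak-tan}. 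It therefore suffices to prove the corresponding assertions for $\operatorname{Perv}_G(Y)$, where one may invoke the standard structure theory of perverse sheaves.

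For part (a), I would first observe that, since $G$ acts transitively, the orbit stratification of $Y$ has a single piece, so a $G$-equivariant perverse sheaf on $Y$ has locally constant cohomology sheaves and, by perversity, is of the form $\mc{S}[\dim Y]$ for a $G$-equivariant local system $\mc{S}$. Thus $\operatorname{Perv}_G(Y)$ is equivalent to the category of $G$-equivariant local systems on $Y = G/K$. Descent along the $K$-torsor $q\colon G \to G/K$ identifies $G$-equivariant sheaves on $G/K$ with finite-dimensional representations of $K$, via $\mc{F}\mapsto \mc{F}_{eK}$, and the equivariant structure forces $q^*\mc{F}$ to be the constant sheaf on $G$; hence the local systems among these are exactly the representations on which $K^0$ acts trivially. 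Equivalently, the monodromy of a $G$-equivariant local system factors through $\pi_1(G/K)/\operatorname{im}(\pi_1(G))$, which by the homotopy exact sequence of the fibration $K\to G\to G/K$ (and connectedness of $G$) equals $\pi_0(K)=H$; conversely every representation of $H$ pulls back to a $G$-equivariant local system. Hence $\operatorname{Perv}_G(Y)\simeq\operatorname{Rep}(H)$, and this category is semisimple because $H$ is finite.

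For part (b), I would argue that any simple object of $\operatorname{Perv}_G(Y)$ has irreducible support $Z$, and that $Z$, being $G$-stable and closed, equals $\ol{O}$ for a unique orbit $O$. Restricting to the open dense orbit $O\subseteq\ol{O}$ and applying part (a) to $O$ (with $K$ the stabilizer of a point of $O$), the restriction is $\mc{S}[\dim O]$ for a simple $G$-equivariant local system $\mc{S}$ on $O$; by the defining property of the intermediate extension functor $j_{!*}$, the simple perverse sheaf must be $IC_{\ol{O}}(\mc{S})$, and conversely every such $IC$-complex is simple and $G$-equivariant. Transporting back along Riemann--Hilbert gives the bijection $(O,\mc{S})\leftrightarrow\mc{L}(O,\mc{S};Y)$. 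The final displayed bijection between equivariant irreducible local systems on $O$ and irreducible representations of $H$ is precisely the restriction to simple objects of the equivalence $\operatorname{Rep}(H)\simeq\{G\text{-equivariant local systems on }O\}$ from part (a), which preserves simple objects since it is an equivalence of abelian categories.

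The bulk of the argument merely assembles standard facts; the step that deserves genuine care is the identification of $G$-equivariant local systems on $G/K$ with $\operatorname{Rep}(\pi_0(K))$ --- specifically, verifying that the equivariant structure kills exactly the image of $\pi_1(G)$ in $\pi_1(G/K)$ and that every representation of $H$ indeed carries a canonical equivariant structure. The remaining ingredients --- regularity and holonomicity of equivariant $\D$-modules, the passage to perverse sheaves, the support argument on a homogeneous space, and the $IC$ classification of simple equivariant perverse sheaves --- are exactly the inputs from the Riemann--Hilbert correspondence and Beilinson--Bernstein--Deligne--Gabber theory that are already cited in the preceding discussion.
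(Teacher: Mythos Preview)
The paper does not actually prove this theorem: it is stated without proof, introduced only by the sentence ``As a consequence of the Riemann--Hilbert correspondence and \cite[Theorem~11.6.1]{hot-tak-tan} we have the following.'' Your proposal is a correct and appropriately detailed unpacking of precisely that citation, following the same route the paper gestures at (reduce to equivariant perverse sheaves via Riemann--Hilbert, then use the standard $IC$-sheaf classification of simples and the identification of equivariant local systems on $G/K$ with representations of $\pi_0(K)$), so there is nothing to compare.
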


The following will play a key role in our description of the category of equivariant $\D$-modules in Section~\ref{sec:category-modDx}.

\begin{lemma}\label{lem:inj}
Let $G,Y$ be as in Theorem~\ref{thm:simples}, and let $O$ denote one of the orbits. Consider $Z=\ol{O}\setminus O$, $U=Y\setminus Z$, and let $j: U \to Y$ be the corresponding open immersion. If $M$ is a simple equivariant $\D_Y$-module with support $\ol{O}$ then $j_* j^* M$ is the injective envelope of $M$ in the category $\opmod^{\ol{O}}_G (\D_Y)$.
\end{lemma}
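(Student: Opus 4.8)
The plan is to verify the defining property of an injective envelope: that $j_*j^*M$ is injective in $\opmod^{\ol O}_G(\D_Y)$, that $M$ embeds in it, and that this embedding is essential. First I would set up the relevant categories. Since $O$ is the unique open orbit in $\ol O$ (all other orbits in $\ol O$ lie in the singular locus $Z = \ol O\setminus O$), the open immersion $j\colon U\to Y$ with $U = Y\setminus Z$ identifies $\ol O\cap U$ with $O$, which is a \emph{closed} orbit in $U$. Hence by Kashiwara's equivalence \eqref{eq:Kashiwara-equiv} we have $\opmod_G^{\ol O\cap U}(\D_U) = \opmod_G^{O}(\D_U)\simeq \opmod_G(\D_O)$, and by Theorem~\ref{thm:simples}(a) this category is semisimple, equivalent to the finite-dimensional representations of the component group $H$ of the stabilizer of a point of $O$. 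In particular $j^*M$, being the restriction of the simple module $M$ with support $\ol O$, is a simple (hence injective) object of $\opmod_G^{O}(\D_U)$.

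Next I would use the adjunction $(j^*, j_*)$ from \eqref{eq:adjoint-pairs}, in the last of the three forms listed there, with the closed subvariety taken to be $\ol O$: this gives an adjoint pair $\opmod_G^{O}(\D_U)\rightleftarrows \opmod_G^{\ol O}(\D_Y)$ in which $j^*$ is exact (indeed restriction to an open set is exact), so its right adjoint $j_*$ preserves injectives. Since $j^*M$ is injective in $\opmod_G^{O}(\D_U)$, it follows that $j_*j^*M$ is injective in $\opmod_G^{\ol O}(\D_Y)$. The unit of the adjunction gives a canonical map $M\to j_*j^*M$, and applying the four-term sequence \eqref{eq:4term-loccoh} to $M$ we see its kernel is $\mc H^0_Z(Y,M)$ and its cokernel embeds in $\mc H^1_Z(Y,M)$. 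Because $M$ is simple with support exactly $\ol O$, it has no nonzero submodule supported on the proper closed subset $Z$, so $\mc H^0_Z(Y,M) = 0$ and the map $M\to j_*j^*M$ is injective.

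It remains to show this embedding is essential, i.e. every nonzero $\D_Y$-submodule $N\subseteq j_*j^*M$ in $\opmod_G^{\ol O}(\D_Y)$ meets $M$ nontrivially. Here I would argue via restriction to $U$: the functor $j^*$ is exact and, on $\opmod_G^{\ol O}(\D_Y)$, it is faithful on subobjects of $j_*j^*M$ in the sense that $j^*$ applied to the inclusion $N\hookrightarrow j_*j^*M$ is an inclusion $j^*N\hookrightarrow j^*j_*j^*M = j^*M$ (the last equality is the standard fact that $j^*j_* = \mathrm{id}$ for an open immersion). If $j^*N = 0$ then $N$ is supported on $Z$, but $N\subseteq j_*j^*M$ and $j_*j^*M$ has no nonzero sections supported on $Z$ — indeed $\mc H^0_Z(Y, j_*j^*M)=0$ since $j_*j^*M = j_*(\text{something})$ has no subobject killed by restriction to $U$ — so $N\neq 0$ forces $j^*N\neq 0$. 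Now $j^*N$ is a nonzero submodule of the simple object $j^*M$, hence $j^*N = j^*M$. Then $N$ and $M$ are two submodules of $j_*j^*M$ with the same (full) restriction to $U$; their sum $N+M$ still restricts to $j^*M$, and the intersection $N\cap M$ restricts to $j^*N\cap j^*M = j^*M\neq 0$ (using exactness of $j^*$ and that both contain $j^*M$), so $N\cap M\neq 0$. This proves essentiality, and hence $j_*j^*M$ is the injective envelope of $M$ in $\opmod_G^{\ol O}(\D_Y)$.

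The main obstacle I anticipate is the essentiality step, specifically making precise that $j_*j^*M$ contains \emph{no} nonzero submodule supported on $Z$ and that $j^*$ detects nonvanishing of submodules. The cleanest route is to observe that for an open immersion $j^*j_* \cong \mathrm{id}$, so any $N\subseteq j_*j^*M$ with $j^*N=0$ would be a subobject of $j_*j^*M$ that dies on $U$; but such a subobject corresponds under \eqref{eq:4term-loccoh} (applied now to $j_*j^*M$ in place of $M$) to a submodule of $\mc H^0_Z(Y,j_*j^*M)$, and the exactness of \eqref{eq:4term-loccoh} together with $j^*j_*j^*M = j^*M$ forces $\mc H^0_Z(Y,j_*j^*M)=0$. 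Once this vanishing is in hand, the rest is formal diagram-chasing in the (closed-under-sub-and-quotient) category $\opmod_G^{\ol O}(\D_Y)$.
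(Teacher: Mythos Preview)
Your proof is correct and follows essentially the same approach as the paper's: both use Kashiwara's equivalence together with Theorem~\ref{thm:simples}(a) to see that $\opmod_G^O(\D_U)$ is semisimple, conclude that $j^*M$ is injective there, push forward via $j_*$ to get injectivity of $j_*j^*M$ in $\opmod_G^{\ol O}(\D_Y)$, and use the vanishing $\mc H^0_Z(Y,M)=0$ to obtain the embedding $M\hookrightarrow j_*j^*M$.

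The only difference is in the final step. You verify essentiality directly, arguing that any nonzero $N\subseteq j_*j^*M$ has $j^*N=j^*M$ and then that $j^*(N\cap M)=j^*N\cap j^*M\neq 0$. The paper instead shows that $j_*j^*M$ is indecomposable by computing its endomorphism ring via adjunction:
\[
\Hom_{\D_Y}(j_*j^*M,\,j_*j^*M)\;=\;\Hom_{\D_U}(j^*j_*j^*M,\,j^*M)\;=\;\Hom_{\D_U}(j^*M,\,j^*M)\;=\;\bb{C},
\]
which is a one-line argument and immediately rules out any nontrivial direct-sum decomposition. Both routes are standard; the paper's is a bit shorter, while yours verifies the defining property of an injective envelope head-on.
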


\begin{proof}
Since $M$ is simple with support $\ol{O}$ and $\mc{H}_Z^0(Y,M)$ is a $\D$-submodule with support contained in $Z\subsetneq\ol{O}$, it follows that $\mc{H}_Z^0(Y,M)=0$. Using (\ref{eq:4term-loccoh}) we get a natural injective map $M\hra j_* j^* M$, so it suffices to verify that $j_*j^*M$ is an injective object in $\opmod^{\ol{O}}_G (\D_Y)$ and that it is indecomposable.

If we let $W = \ol{O}$ then $W\cap U = O$, so based on (\ref{eq:adjoint-pairs}) we can think of $j_*,j^*$ as a pair of adjoint functors between the categories $\opmod^{{O}}_G (\D_U)$ and $\opmod^{\ol{O}}_G (\D_Y)$. Using Kashiwara's equivalence (\ref{eq:Kashiwara-equiv}) and Theorem~\ref{thm:simples}(a) we get that $\opmod^{O}_G (\D_U)\simeq \opmod_G (\D_O)$ is semisimple and in particular $j^*M$ is an injective object. As remarked earlier, $j_*$ takes injectives to injectives, hence $j_*j^*M$ is injective in $\opmod^{\ol{O}}_G (\D_Y)$.

To show that  $j_* j^* M$ is indecomposable, we note that  $j^*M$ is simple and therefore
\[\Hom_{\D_Y} (j_* j^* M, j_* j^* M) = \Hom_{\D_U} (j^*j_* j^* M, j^* M)=\Hom_{\D_U} (j^* M, j^* M)=\bb{C}.\]
If $j_* j^* M$ decomposed as a direct sum of two submodules then $\Hom_{\D_Y} (j_* j^* M, j_* j^* M)$ would have dimension at least two, concluding our proof.
\end{proof}

We end this section by recalling two important functors on $\opmod_G(\D_Y)$:
\begin{itemize}
 \item The \defi{holonomic duality functor} $\bb{D}: \opmod_G(\D_Y) \lra \opmod_G(\D_Y)$ is a \defi{duality of categories} (i.e. an equivalence of categories between $\opmod_G(\D_Y)$ and its opposite category $\opmod_G(\D_Y)^{op}$). Translated via the Riemann--Hilbert correspondence this functor corresponds to Verdier duality \cite[Corollary~4.6.5]{hot-tak-tan}, and in particular it restricts to a duality on $\opmod_G^Z(\D_Y)$ for any $G$-stable subvariety $Z$. At the level of $\D_Y$-modules it is defined via (see \cite[Section~2.6]{hot-tak-tan})
 \[ \bb{D}(M) = \ShExt^{\ n}_{\D_Y}(M,\D_Y) \oo_{\mc{O}_Y} \omega_Y^{-1}\]
where $n=\dim(Y)$, $\omega_Y$ is the canonical line bundle on $Y$, and where $\bullet \oo_{\mc{O}_Y} \omega_Y^{-1}$ is the functor that transforms right $\D_Y$-modules (of which $\ShExt^{\ n}_{\D_Y}(M,\D_Y)$ is one) into left $\D_Y$-modules \cite[Proposition~1.2.12]{hot-tak-tan}. What will be important for us is that $\bb{D}$ interchanges injective and projective objects in $\opmod_G^Z(\D_Y)$, and that it permutes the collection of simple objects. 
 
 \item The \defi{Fourier transform}. If $Y$ is an affine space, corresponding to a finite dimensional $G$-representation $U$, we let $Y^{\vee}$ denote the affine space corresponding to the dual representation $U^{\vee}$. As explained in \cite[Section~2.5]{raicu-matrices} we get an equivalence of categories
 \[\mc{F}^{\circ}:\opmod_G(\D_Y) \lra \opmod_G(\D_{Y^{\vee}}),\quad \mc{F}^{\circ}(M) = M \oo_{\bb{C}} \det(U^{\vee}).\] 
In the case when $G = \GL(W)$ and $Y=\S_{\ll}W$, we can further use the natural isomorphism $\tau:\GL(W) \simeq \GL(V)$ given by $\tau(\phi) = (\phi^{\vee})^{-1}$ (the inverse transpose), which is compatible with the respective actions of $\GL(V)$ and $\GL(W)$ on $\S_{\ll}V$, to obtain an equivalence of categories
\[ \opmod_{\GL(W)}(\D_{\S_{\ll}V}) \overset{\tau}{\lra} \opmod_{\GL(V)}(\D_{\S_{\ll}V})\]
Fixing a vector space isomorphism $T:V\to W$ we obtain a group isomorphism $\tau':\GL(V) \simeq \GL(W)$ given by $\tau'(\phi) = T\circ\phi\circ T^{-1}$, and because of the functoriality of $\bb{S}_{\ll}$ an isomorphism $\tl{T}:\S_{\ll}V \lra S_{\ll}W$ compatible with the respective actions of $\GL(V)$ and $\GL(W)$ on the source and the target. This provides a further equivalence of categories
\[ \opmod_{\GL(V)}(\D_{\S_{\ll}V}) \overset{(\tau',\tl{T})}{\lra} \opmod_{\GL(W)}(\D_{\S_{\ll}W}).\]
Putting all of these equivalences together, we obtain a self equivalence of categories
\begin{equation}\label{eq:def-Fourier}
\mc{F}:\opmod_{\GL(W)}(\D_{\S_{\ll}W}) \overset{\mc{F}^{\circ}}{\lra} \opmod_{\GL(W)}(\D_{\S_{\ll}V}) \overset{\tau}{\lra} \opmod_{\GL(V)}(\D_{\S_{\ll}V}) \overset{(\tau',\tl{T})}{\lra} \opmod_{\GL(W)}(\D_{\S_{\ll}W})
\end{equation}
which we will refer to (by abuse of language) as the Fourier transform on $Y=\S_{\ll}W$.
\end{itemize}

\begin{remark}\label{rem:Fourier} The construction of a Fourier transform as a self-equivalence of $\opmod_G(\D_{Y})$ can be done more generally when $G$ is a connected linear reductive group. Let $T\subset G$ be a maximal torus and let $B\subset G$ be a Borel subgroup containing $T$. There exists an involution $\theta \in \Aut(G)$ such that $\theta(t)=t^{-1}$ for all $t\in T$ and $B\cap \theta(B)=T$ (for example, see \cite[II. Corollary 1.16]{jantzen}). If $V$ is any $G$-module, we can twist the action of $G$ by $\theta$ to obtain a $G$-module $V^*$, which is isomorphic to the usual dual representation $V^{\vee}$ of $V$. Twisting the action of $G$ on $Y$ by $\theta$ gives then an equivalence of categories $\tau_{\theta}:\opMod_G(\D_{Y}) \simeq \opMod_G(\D_{Y^*})$. Using a $G$-isomorphism $Y^*\simeq Y^{\vee}$ together with the usual Fourier transform $\mathcal{F}^{\circ} : \opMod_G(\D_{Y^{\vee}}) \xrightarrow{\sim} \opMod_G(\D_{Y})$ we obtain an involutive self-equivalence of $\opmod_G(\D_Y)$ (which we also call the Fourier transform)
\[\mathcal{F} : \opmod_G(\D_Y) \overset{\tau_{\theta}}{\lra} \opmod_G(\D_{Y^*}) \xrightarrow{\sim} \opmod_G(\D_{Y^{\vee}}) \overset{\mc{F}^{\circ}}{\lra} \opMod_G(\D_{Y}).\]
\end{remark}

\subsection{The Fourier transform of admissible representations}\label{subsec:Fourier-admissible}

We let $U = \Sym^3 W$ and recall from (\ref{eq:det-sym}) that
\[ \det(U) = \S_{(6,6)}W.\]
We define the Fourier transform (relative to $U$) $\mc{F}:\Lambda\to\Lambda$ via
\[ \mc{F}(\ll) = \ll^{\vee} - (6,6) = (-\ll_2-6,-\ll_1-6).\]
This in turn induces a Fourier transform $\mc{F}:\Gamma(\GL) \lra \Gamma(\GL)$ given by
\[\mc{F}\left(\sum_{\ll} a_{\ll} e^{\ll}\right) = \sum_{\ll} a_{\ll} e^{\mc{F}(\ll)}.\]
The motivation behind this definition is as follows. If $X = \Sym^3 W$ is the affine space corresponding to $U$, and if $M\in\opmod_{\GL}(\D_X)$ then it follows from \cite[Theorem~2.4]{lor-wal} that $[M]$ is an admissible $\GL$-representation, and therefore
\[ M \simeq \bigoplus_{\ll} \S_{\ll}V^{\oplus a_{\ll}},\mbox{ for }a_{\ll}\in\bb{Z}.\]
 Applying the Fourier transform $\mc{F}:\opmod_{\GL}(\D_X)\lra \opmod_{\GL}(\D_X)$ as constructed in (\ref{eq:def-Fourier}) we obtain
 \[\mc{F}(M) \simeq \bigoplus_{\ll} \S_{\ll+(6,6)}W^{\oplus a_{\ll}} = \bigoplus_{\ll} \S_{\ll^{\vee}-(6,6)}V^{\oplus a_{\ll}},\mbox{ that is }[\mc{F}(M)]=\mc{F}([M]).\]
We rewrite the above conclusion as follows:
\begin{equation}\label{eq:wts-Fourier}
 \langle [\mc{F}(M)], e^{\ll} \rangle = \langle [M], e^{\ll^{\vee} - (6,6)} \rangle \mbox{ for every dominant weight }\ll.
\end{equation}

We record two basic instances of the Fourier transform which will be used later. Using the fact that the simple $\D$-module $E = \mc{L}(O_0;X)$ supported at the origin can be realized as $\mc{F}(S)$ it follows from (\ref{eq:character-S}) that
\begin{equation}\label{eq:character-E}
[E] = [\mc{F}(S)] = \frac{1+e^{(-3,-6)}}{(1-e^{(0,-3)})(1-e^{(-2,-4)})(1-e^{(-6,-6)})}\cdot e^{(-6,-6)}.
\end{equation}
We can also use (\ref{eq:wts-Fourier}) in conjunction with (\ref{eq:character-Sdelta}) to show that the character of $S_{\Delta}$ doesn't change under taking Fourier transform:
\[ [\mc{F}(S_{\Delta})] = \frac{1+e^{(-3,-6)}}{(1-e^{(0,-3)})(1-e^{(-2,-4)})} \cdot e^{(6,6)\bb{Z}} \cdot e^{(-6,-6)} = [S_{\Delta}].\]
This is a reflection of the fact that $\mc{F}(S_{\Delta})$ has three composition factors, $S,E$ and $P$, and the Fourier transform exchanges $S$ and $E$, and it preserves $P$.

\subsection{Quivers and their representations}\label{subsec:quivers}
We begin by establishing some notation and reviewing some basic results regarding the representation theory of quivers, following \cite{ass-sim-sko}.  A \defi{quiver} $\Q$ is an oriented graph, i.e. a pair $\Q=(\Q_0,\Q_1)$ formed by a finite set of vertices $\Q_0$ and a finite set of arrows $\Q_1$. An arrow $\a\in \Q_1$ has a \defi{head} (or a \defi{target}) $h(\a)$ and a \defi{tail} (or a \defi{source}) $t(\a)$ which are elements of $\Q_0$:
\[\xymatrix{
t(\a) \ar[r]^{\a} & h(\a)
}\]
A \defi{(directed) path $p$ in $\Q$ from $a$ to $b$ of length $l$} is a sequence of arrows
\begin{equation}\label{eq:pathQ}
p:\quad\xymatrix{
a=a_0 \ar[r]^{\a_1} & a_1 \ar[r]^{\a_2} & a_2 \ar[r]^{\a_3} & \ \cdots\ \ar[r]^{\a_l} & a_l = b \\
}
\end{equation}
where $a_i\in \Q_0$ and $\a_i\in \Q_1$. We call $a$ (resp. $b$) the \defi{source} (resp. \defi{target}) of the path, and write $p=\a_1\a_2\cdots\a_l$. The complex vector space with basis given by the paths in $\Q$ has a natural multiplication induced by concatenation of paths (where $pq=0$ if the source of $q$ is different from the target of $p$). The corresponding $\bb{C}$-algebra is called the \defi{path algebra of the quiver $\Q$} and is denoted $\bb{C}\Q$. A \defi{relation} in $\Q$ is a $\bb{C}$-linear combination of paths of length at least two having the same source and target. We define a \defi{quiver with relations} $(\Q,\mc{I})$ to be a quiver $\Q$ together with a finite set of relations $\mc{I}$. The \defi{quiver algebra of $(\Q,\mc{I})$} is the quotient $\mc{A}=\bb{C}\Q/\langle\mc{I}\rangle$ of the path algebra by the ideal generated by the relations. We will often use the word quiver to refer to a quiver with relations $(\Q,\I)$, and talk about $\sum c_i\cdot p_i = 0$ as being a relation in the quiver if $\sum c_i\cdot p_i \in \langle\mc{I}\rangle$, or equivalently if the relation $\sum c_i\cdot p_i = 0$ holds in the quiver algebra $\mc{A}$. We will moreover always assume that the ideal of relations $\langle\mc{I}\rangle$ contains any path whose length is large enough, so that the corresponding quiver algebra $\mc{A}$ is finite dimensional (see \cite[Section II.2]{ass-sim-sko}).

A \defi{(finite-dimensional) representation} $V$ of a quiver $(\Q,\mc{I})$ is a family of (finite-dimensional) vector spaces $\{V_x\,|\, x\in Q_0\}$ together with linear maps $\{V(\a) : V_{t(\a)}\to V_{h(\a)}\, | \, \a\in Q_1\}$ satisfying the relations induced by the elements of $\mc{I}$. More precisely, for every path $p:a\to b$ as in (\ref{eq:pathQ}) we consider the composition
\[ V(p) = V(\a_l) \circ V(\a_{l-1}) \circ \cdots \circ V(\a_1)\]
and we ask that for every element $\sum_i c_i\cdot p_i\in\mc{I}$ with $c_i\in\bb{C}$ and $p_i$ a path from $a$ to $b$, we have that
\[ \sum_i c_i\cdot V(p_i) = 0.\]
A morphism $\phi:V\to V'$ of two representations $V,V'$ of $(\Q,\mc{I})$ is a collection of linear maps 
\[\phi = \{\phi(x) : V_x \to V'_x\,| \,x\in Q_0\},\]
with the property that for each $\a\in Q_1$ we have 
\[\phi(h(\a))\circ V(\a)=V'(\a)\circ \phi(t(\a)).\]
We note that the data of a representation of $(\Q,\mc{I})$ is equivalent to that of a module over the quiver algebra $\mc{A}$, and a morphism of representations corresponds to an $\mc{A}$-module homomorphism. In other words, the category $\rep(\Q,\mc{I})$ of finite-dimensional representations of $(\Q,\mc{I})$ is equivalent to that of finitely generated $\mc{A}$-modules \cite[Section~III.1, Thm.~1.6]{ass-sim-sko}. Moreover, this is an abelian category with enough projectives and injectives, and having finitely many simple objects, as we explain next.

The (isomorphism classes of) simple objects in $\rep(\Q,\mc{I})$ are in bijection with the vertices of $\Q$. For each $x\in \Q_0$, the corresponding simple $\sS^x$ is the representation with 
\begin{equation}\label{eq:defS}
(\sS^x)_x=\bb{C},\ (\sS^x)_y=0\mbox{ for all }y\in \Q_0\setminus\{x\},\mbox{ and }\sS^x(\a)=0\mbox{ for all }\a\in\Q_1.
\end{equation}
A representation of $(\Q,\mc{I})$ is called \defi{indecomposable} if it is not isomorphic to a direct sum of two non-zero representations. Just like the simple objects, the indecomposable projectives (resp. injectives) in $\rep(\Q,\mc{I})$ are in bijection with the vertices of $\Q$. For each $x\in \Q_0$, we let $\sP^x$ (resp. $\sI^x$) denote the \defi{projective cover} (resp. \defi{injective envelope}) of~$\sS^x$, as constructed in \cite[Section III.2]{ass-sim-sko}. For $y\in\Q_0$, the dimension of $(\sP^x)_y$ (resp. $(\sI^x)_y$) is given by the number of paths from $x$ to $y$ (resp. from $y$ to $x$), considered up to the relations in $\mc{I}$. More precisely
\begin{equation}\label{eq:defP-I}
 (\sP^x)_y = \mbox{Span}\{ p\in\mc{A}:p\mbox{ is a path from }x\mbox{ to }y\},\ (\sI^x)_y = \mbox{Span}\{ p\in\mc{A}:p\mbox{ is a path from }y\mbox{ to }x\}^{\vee},
\end{equation}
where $^{\vee}$ denotes as usual the dual vector space. For an arrow $\a\in\Q_0$, thought of as an element of $\mc{A}$, we have that $\sP^x(\a)$ is right multiplication by $\a$, while $\sI^x(\a)$ is the dual to left multiplication by $\a$.

Unlike the classification of simple objects, and that of indecomposable injective and projective objects, the classification of general indecomposable objects in $\rep(\Q,\mc{I})$ is significantly more involved. A quiver $(\Q,\mc{I})$ is said to be of \defi{finite representation type} if it has finitely many (isomorphism types of) indecomposable representations. It is of \defi{tame representation type} if all but a finite number of indecomposable representations of $(\Q,\mc{I})$ of a given dimension belong to finitely many one-parameter families \cite[XIX.3, Definition 3.3]{sim-sko3}, and it is of \defi{wild representation type} otherwise. As the name suggests, in the wild case the classification of indecomposables is essentially intractable, while in the finite and tame cases it is more manageable. A special instance of tame representation type, and the one that will concern us in this paper, is when the number of one-parameter families of indecomposables is bounded as the dimension of the representations grows. We say in this case that the quiver $(\Q,\mc{I})$ is of \defi{domestic tame representation type} \cite[XIX.3, Definitions 3.6, 3.10 and Theorem 3.12]{sim-sko3}. One example of such a quiver, which will appear again in Section~\ref{subsec:indecomp}, is given by the following.

\begin{example}\label{ex:dee4hat}
Let $Q$ be the extended Dynkin quiver $\hat{D}_4$ (with no relations):
\[\hspace{-0.2in} \hat{D}_4: \hspace{0.15in} \begin{aligned} \xymatrix@R=0.7pc@C=2.5pc{
1 \ar[dr]^{\alpha_1}  & & 2 \ar[dl]_{\alpha_2}\\
 & 5 & \\
4 \ar[ur]_{\alpha_4} & & 3\ar[ul]^{\alpha_3}
}\end{aligned}\]
The representation theory of the the quiver $\hat{D}_4$ is well-understood. The quiver is of (domestic) tame representation type, and all its indecomposable representations have been classified -- this is, in the language of quivers, the famous four subspace problem solved in \cite{gel-pon}. For the complete description of the indecomposables of $\hat{D}_4$ together with the Auslander-Reiten quiver, we refer the reader to \cite[Section XIII.3]{sim-sko2}. In the following we illustrate only the 1-parameter families of indecomposable representations (they are taken from \cite[XIII.3. Table 3.14]{sim-sko2}): for each $n>0$ there exists precisely one $1$-parameter family of indecomposable representations $V$ of $\hat{D}_4$ with dimension vector $(n,n,n,n,2n)$ (i.e. with $\dim V_1 = \dim V_2 = \dim V_3 = \dim V_4 = 1$ and $\dim V_5 = 2n$), and moreover these are all the $1$-parameter families occuring in the classification of indecomposable of $\hat{D}_4$. 

Let $I_n$ denote the $n\times n$ identity matrix, and for any $\lambda\in\bb{C}$ we denote by $J_n(\lambda)$ the $n\times n$ Jordan block
\[J_n(\lambda) = \begin{bmatrix}
\lambda & 1 & 0 & \dots & 0 \\
0 & \lambda & 1 & \dots & 0 \\
\vdots & \vdots & \ddots &\ddots & \vdots\\
0 & 0 & \dots & \lambda & 1 \\
0 & 0 & \dots & 0 & \lambda
\end{bmatrix}.\]
Then for any $n>0$, we have the following 1-parameter family of indecomposables $R_n(\lambda)$, where $\lambda \in \bb{C}$:
\[\hspace{-0.2in} R_n(\lambda) : \hspace{0.15in} \begin{aligned} \xymatrix@R=2pc@C=4pc{
\bb{C}^n \ar[dr]^{\footnotesize\begin{bmatrix} I_n\\ 0 \end{bmatrix}}  & & \bb{C}^n  \ar[dl]_{\footnotesize\begin{bmatrix} 0\\I_n \end{bmatrix}}\\
 & \bb{C}^{2n} & \\
\bb{C}^n \ar[ur]_{\footnotesize\begin{bmatrix} I_n\\ J_n(\lambda) \end{bmatrix}} & & \bb{C}^n\ar[ul]^{\footnotesize\begin{bmatrix} I_n\\ I_n \end{bmatrix}}
}\end{aligned}\]
%In terms of the Auslander-Reiten quiver (see \cite{ass-sim-sko}) of $\hat{D}_4$, the indecomposables form a preprojective and a preinjective component, together with a  regular part. The components of the regular part consist of a 1-parameter family of stable tubes of rank 1 and 3 exceptional tubes of rank 2.
\end{example}

In Section~\ref{sec:category-modDx} we will be concerned with the quiver associated with the category of $\GL$-equivariant $\D$-modules on the space of binary cubic forms. We will prove that the said quiver is of (domestic) tame representation type and we will classify its indecomposables by directly relating them to indecomposables of~$\hat{D}_4$. The following reduction lemmas will play a key role in describing this relationship.

Consider a vertex $x$ in a quiver $(\Q,\mc{I})$:
\[\xymatrix@C=4pc@R=1pc{
	\cdots \circ \ar@<0.6ex>[ddr]^(0.4){\a_1} & & \circ \cdots\\
\cdots  \circ \ar[dr]^(0.4){\a_2} & & \circ \cdots\\
\vdots  & x  \ar@<0.6ex>[uur]^(0.6){\b_1} \ar[ur]^(0.6){\b_2}  \ar[dr]^{\b_n} & \vdots \\
\cdots  \circ \ar[ur]^{\a_m} & & \circ  \cdots
}\]
Following \cite{martinez}, we say $x$ is a \defi{node} of $(\Q,\mc{I})$ if we have $\a_i\b_j = 0$ for all  $1\leq i \leq m, 1\leq j \leq n$. If $x$ is a node of $(\Q,\mc{I})$, we can ``separate the node" to obtain a new quiver $(\Q', \I')$:

\[\xymatrix@C=3pc@R=1pc{
	\cdots \circ \ar@<0.7ex>[ddr]^(0.4){\a_1} & & & \circ \cdots\\
\cdots  \circ \ar@<0.2ex>[dr]^(0.4){\a_2} & & & \circ \cdots\\
\vdots  & x' & x  \ar@<0.7ex>[uur]^(0.6){\b_1} \ar@<0.2ex>[ur]^(0.6){\b_2}  \ar[dr]^{\b_n} & \vdots \\
\cdots  \circ \ar[ur]^{\a_m} & & & \circ  \cdots
}\]
The relations in $\I'$ are the natural ones obtained from $\I$ by removing the relations $\a_i \b_j = 0$. If $V$ is a representation of $(\Q,\I)$, then we can induce a representation $V'$ of $(\Q',\I')$ by letting 
\[V'_{x'} = \sum_{i=1}^{m} \operatorname{Im} V(\a_i) \subset V_x \mbox{ and } V'_{x} = V_x / V'_{x'}.\]
The following is a consequence of \cite[Theorem 2.10(b)]{martinez} (albeit formulated in a slightly different language):
\begin{lemma}\label{lem:node}
The functor $V\mapsto V'$ induces a bijection between the (isomorphism classes of) non-simple indecomposable objects in $\rep(\Q,\I)$ and those in $\rep(\Q',\I')$.
\end{lemma}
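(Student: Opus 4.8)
The plan is to construct the inverse functor to $V \mapsto V'$ explicitly and check both composites are naturally isomorphic to the identity on the respective subcategories of non-simple indecomposables. The key observation is that separating the node $x$ replaces the single space $V_x$ by the pair $(V'_{x'}, V'_x) = (\sum_i \operatorname{Im} V(\alpha_i),\, V_x/\sum_i \operatorname{Im} V(\alpha_i))$, where $x'$ receives all the incoming arrows $\alpha_i$ and $x$ keeps all the outgoing arrows $\beta_j$; the relations $\alpha_i\beta_j = 0$ become vacuous because in $(\Q',\I')$ there is simply no path through $x$ (the $\beta_j$ now emanate from a vertex disjoint from the target of the $\alpha_i$). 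Conversely, given a representation $U$ of $(\Q',\I')$, I would reconstruct a representation of $(\Q,\I)$ by setting $V_x := U_{x'} \oplus U_x$, letting each $V(\alpha_i)$ be $U(\alpha_i)$ followed by the inclusion $U_{x'} \hookrightarrow U_{x'}\oplus U_x$, and letting each $V(\beta_j)$ be the projection $U_{x'}\oplus U_x \twoheadrightarrow U_x$ followed by $U(\beta_j)$; all remaining spaces and maps are unchanged. The composition $\alpha_i\beta_j$ then vanishes because it factors through $U_{x'}\xrightarrow{\text{incl}} U_{x'}\oplus U_x \xrightarrow{\text{proj}} U_x$, which is zero, so $V$ indeed satisfies the relations $\I$. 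This shows the two functors are mutually quasi-inverse up to natural isomorphism on the whole categories, except that one must be careful about which objects get mapped to simples.

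The subtlety — and the reason the statement is phrased in terms of \emph{non-simple} indecomposables — is precisely in bookkeeping the simple objects. Going from $V$ to $V'$, the simple $\sS^x$ of $(\Q,\I)$ (with $V_x = \bb{C}$ and all arrows zero) produces $V'_{x'} = 0$, $V'_x = \bb{C}$, i.e. the simple $\sS^x$ of $(\Q',\I')$; so simples do correspond to simples and the reconstruction functor above is a genuine quasi-inverse only after we delete them, or rather after we observe that on simples the correspondence is still a bijection but the claim we are asserting is about the non-simple part. In the other direction, the new simple $\sS^{x'}$ of $(\Q',\I')$ comes back (under my reconstruction functor) to the representation with $V_x = \bb{C}$ and all $\alpha_i = \beta_j = 0$, which is again $\sS^x$ — so $\sS^{x'}$ does not lie in the image of $V\mapsto V'$ restricted to non-simples, and indeed $\sS^{x'}$ is excluded on the target side as well. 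One must verify that no non-simple indecomposable of $(\Q,\I)$ maps to $\sS^{x'}$ and no non-simple indecomposable of $(\Q',\I')$ pulls back to a simple; both follow from the explicit formulas, since $V' = \sS^{x'}$ forces $V'_x = 0$ hence $V_x = \sum_i \operatorname{Im} V(\alpha_i)$, and then the subrepresentation supported away from $x$ splits off unless $V$ itself is concentrated appropriately.

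I would organize the write-up as: (i) recall the functor $V\mapsto V'$ and note it is additive and exact in the relevant sense; (ii) define the reconstruction functor $U\mapsto \widetilde{U}$ as above and check $\widetilde{U}$ satisfies $\I$; (iii) produce natural isomorphisms $\widetilde{V'}\cong V$ for $V$ with no direct summand isomorphic to $\sS^{x'}$-type pieces, using that the short exact sequence $0\to V'_{x'}\to V_x\to V'_x\to 0$ splits noncanonically but functorially enough after fixing the splitting, and $(\widetilde{U})' \cong U$ on the nose; (iv) conclude that both functors preserve indecomposability (a direct-sum decomposition of $V$ transports to one of $V'$ and vice versa, since the functors are additive), and track the simples to see that the bijection is exactly between the \emph{non-simple} indecomposable classes on each side. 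Most of this is bookkeeping; the one place requiring genuine care is step (iii), matching up the reconstruction with the original over the node, since the splitting of $0\to V'_{x'}\to V_x\to V'_x\to 0$ is not canonical — but any choice of splitting yields an isomorphism $\widetilde{V'}\cong V$ of representations, and the ambiguity does not affect the induced bijection on isomorphism classes. This is exactly the content of \cite[Theorem 2.10(b)]{martinez} translated into the present language, so the cleanest route is to cite that theorem and merely indicate the dictionary; the hard part is just making sure the dictionary (which vertex is $x$ versus $x'$, incoming versus outgoing) is set up consistently with Mart\'inez-Villa's conventions.
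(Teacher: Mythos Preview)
Your proposal is correct, and your final recommendation---to simply cite \cite[Theorem 2.10(b)]{martinez} and indicate the dictionary---is exactly what the paper does: the lemma is stated as a direct consequence of that theorem with no further argument. Your extended sketch (constructing the inverse functor $U\mapsto\widetilde U$, checking $(\widetilde U)'\cong U$ for $U$ indecomposable and $\neq\sS^{x'}$, and noting that any splitting of $0\to V'_{x'}\to V_x\to V'_x\to 0$ yields $\widetilde{V'}\cong V$) is a sound outline of the argument underlying Mart\'inez-Villa's result, but it goes well beyond what the paper provides. One small point worth tightening in your sketch: the claim that ``a direct-sum decomposition of $V'$ transports to one of $V$'' is not immediate from additivity alone; it follows because $\widetilde{V'}\cong V$ (for some choice of splitting) and the reconstruction functor is additive, so $V'\cong U_1\oplus U_2$ gives $V\cong\widetilde{U_1}\oplus\widetilde{U_2}$, forcing one $\widetilde{U_i}=0$ and hence $U_i=0$.
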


\vspace{0.1in}

Consider next the following situation. Suppose that $(\Q,\I)$ is the quiver
\[\xymatrix@C=4pc@R=1pc{
	y \ar@<0.6ex>[ddr]^(0.4){\a} & & z\\
\cdots  \circ \ar[dr]^(0.4){\a_1} & & \circ \cdots\\
\vdots  & x  \ar@<0.6ex>[uur]^(0.6){\b} \ar[ur]^(0.6){\b_1}  \ar[dr]^{\b_n} & \vdots \\
\cdots  \circ \ar[ur]^{\a_m} & & \circ  \cdots
}\]
with relations $\a_i\cdot\b= 0$ and $\a\cdot\b_j=0$, for all $1\leq i \leq m,1\leq j \leq n$, where the diagram above is indicative of the fact that the only arrow connected to the vertex $y$ (resp. $z$) is $\a$ (resp. $\b$). Let $\sP^y$ be the indecomposable projective corresponding to $y$, and let $\sI^z$ be the indecomposable injective corresponding to $z$. It follows from the explicit description in (\ref{eq:defP-I}) that $\sP^y\simeq\sI^z$ with
\[(\sP^y)_y = (\sP^y)_x = (\sP^y)_z = \bb{C},\ \sP^y(\a) = \sP^y(\b) = \mbox{id}_{\bb{C}},\mbox{ and }(\sP^y)_t = 0\mbox{ for }t\neq x,y,z.\]
We have moreover the following.

\begin{lemma}\label{lem:addrel}
If $(\Q,\I)$ is a quiver as above and if $V$ is an indecomposable representation of $(\Q,\I)$ with $V\not\simeq \sP^y$, then $V(\b) \circ V(\a) = 0$.
\end{lemma}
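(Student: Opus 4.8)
The plan is to argue by contradiction, using the injectivity of $\sI^z$. Suppose $V$ is an indecomposable representation of $(\Q,\I)$ with $V(\b)\circ V(\a)\neq 0$; I will exhibit a subrepresentation $W\subseteq V$ with $W\simeq\sP^y\simeq\sI^z$, and then invoke the fact that an injective subobject splits off to deduce $V=W\simeq\sP^y$, contradicting the hypothesis $V\not\simeq\sP^y$.

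To construct $W$, I would pick $v_0\in V_y$ with $v_2:=V(\b)(V(\a)(v_0))\neq 0$, set $v_1:=V(\a)(v_0)\in V_x$, and note that $v_0,v_1,v_2$ are all nonzero. Define $W$ by $W_y=\bb{C}\cdot v_0$, $W_x=\bb{C}\cdot v_1$, $W_z=\bb{C}\cdot v_2$, and $W_t=0$ for all remaining vertices $t$, with structure maps the restrictions of those of $V$. The crux of the argument is checking that $W$ really is a subrepresentation: $V(\a)$ sends $v_0$ to $v_1$ and $V(\b)$ sends $v_1$ to $v_2$, so these two arrows behave; arrows other than $\a,\b$ incident to $y$ or $z$ do not exist by hypothesis; arrows $\a_i$ into $x$ impose nothing, since $W$ vanishes on their sources; and for the arrows $\b_1,\dots,\b_n$ out of $x$ one uses the relations, namely $V(\b_j)(v_1)=V(\a\b_j)(v_0)=0$ because $\a\b_j\in\I$. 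Once $W$ is known to be a subrepresentation, its three nonzero components are one-dimensional, the maps $V(\a)\colon W_y\to W_x$ and $V(\b)\colon W_x\to W_z$ are isomorphisms, and all other structure maps vanish; comparing with the explicit description of $\sP^y\simeq\sI^z$ recorded just before the lemma identifies $W\simeq\sP^y$.

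To finish: since $\sI^z$ is injective in $\rep(\Q,\I)$ and $W\simeq\sI^z$ sits inside $V$, the identity map of $W$ extends along the inclusion $W\hra V$ to a retraction $V\to W$, so $W$ is a direct summand of $V$. Because $V$ is indecomposable and $W\neq 0$, this forces $V=W\simeq\sP^y$, contradicting $V\not\simeq\sP^y$. Hence no such $V$ exists, i.e. every indecomposable $V\not\simeq\sP^y$ satisfies $V(\b)\circ V(\a)=0$.

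I expect the only delicate step to be the verification that $W$ closes up into a subrepresentation: this is where all the hypotheses are used (the leaf conditions at $y$ and $z$, and the relations $\a\b_j=0$), and one must be sure that no arrow out of $x$ other than $\b$ can move $v_1$ off the line $\bb{C}\cdot v_1$. Everything afterward — the identification $W\simeq\sP^y$ and the splitting of an injective subobject in an abelian category with enough injectives — is routine.
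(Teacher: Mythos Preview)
Your proof is correct and is genuinely different from the paper's argument. The paper restricts $V$ to an auxiliary $D_4$-shaped quiver (with vertices $y,x,z$ and an extra vertex $a$ carrying $A=\sum_j \operatorname{im}V(\a_j)$), invokes the classification of indecomposable $D_4$-representations to find a summand of the required shape, and then argues that this summand lifts to a direct summand of $V$ in $\rep(\Q,\I)$. Your approach bypasses all of this: you directly exhibit a one-dimensional-at-each-vertex subrepresentation $W\simeq\sP^y\simeq\sI^z$ and split it off using injectivity of $\sI^z$. This is more elementary (no appeal to the $D_4$ classification) and arguably cleaner, since the paper's ``lifting'' of the $D_4$ decomposition back to $(\Q,\I)$ is stated rather tersely. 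The only cost is that your argument leans on the identification $\sP^y\simeq\sI^z$ and the injectivity of $\sI^z$, but both are recorded immediately before the lemma and are standard. Your verification that $W$ is a subrepresentation is exactly right: the leaf hypotheses at $y$ and $z$ handle those vertices, and the relations $\a\b_j=0$ give $V(\b_j)(v_1)=V(\b_j)V(\a)(v_0)=0$, which is precisely what is needed for the arrows $\b_j$ out of $x$.
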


\begin{proof}
We let $A= \sum_{j=1}^m \operatorname{im} V(\a_j)$ and denote $i: A\hookrightarrow V_x$ the inclusion. We consider the Dynkin quiver of type $D_4$ (with no relations)
\[
\xymatrix@R=0.2pc{ 
y \ar[dr]^{\a} &  \\
 & x \ar[r]^{\b} & z \\
 a  \ar[ur]_{\gamma} & & 
}
\]
and its representation $V'$ given by
\[V': \hspace{0.2in} \begin{aligned}\xymatrix@R=0.2pc{ 
V_y \ar[dr]^{V(\a)} &  \\
 & V_x \ar[r]^{V(\b)} & V_z \\
 A   \ar[ur]_i & & 
}\end{aligned}\]
If we assume that $V(\b) \circ V(\a) \neq 0$, then $V'$ has an indecomposable summand $X$ with the property that $X(\b) \circ X(\a) \neq 0$. Moreover, it follows from the relations in $(\Q,\I)$ that $V'(\b) \circ V'(\gamma) = V(\b) \circ i = 0$, hence $X(\b) \circ X(\gamma) = 0$. The classification in \cite[Chapter VII,~Example~5.15(b)]{ass-sim-sko} of indecomposables of the Dynkin quiver of type $D_4$ shows that there is only one indecomposable $X$ with $X(\b) \circ X(\a) \neq 0$ and $X(\b) \circ X(\gamma) = 0$, namely
\[X: \hspace{0.2in} \begin{aligned}\xymatrix@R=0pc{ 
\bb{C} \ar[dr]^{1} & &  \\
 & \bb{C} \ar[r]^{1} & \bb{C} \\
 0  \ar[ur] & & 
}\end{aligned}\]
We can then write $V'\simeq X^{\oplus k} \oplus Y$, for some $k\geq 1$, where $Y$ is a representation of the quiver of type $D_4$ with $Y(\b)\circ Y(\a)=0$. Due to the relations $\a \cdot \b_j=0$, we can lift this isomorphism to a decomposition $V\simeq(\sP^y)^{\oplus k} \oplus Z$ in $\rep(\Q,\I)$, where $Z$ is a representation of $(\Q,\I)$ with $Z(\a) = Y(\a)$, $Z(\b)=Y(\b)$. %\andras{commented out some untrue parts}%, and $Z(\b_j)=0$ - rather equal to restriction of V(\b_j) to Y_x -, $Z(\a_j) = V(\a_j)$ - technically goes to a different space Y_x - for all $j$.
 Since $V$ was indecomposable, we conclude that $k=1$ and $Z=0$, i.e. $V\simeq\sP^y$ which is a contradiction. It follows that we must have $V(\b) \circ V(\a)=0$, concluding the proof.
\end{proof}

\section{The simple equivariant $\D$-modules on binary cubics}\label{sec:simple-Dmods}

In this section we give a classification of the simple $\GL$-equivariant holonomic $\D$-modules on the space $X = \Sym^3 W$ of binary cubic forms. For each of these simple modules we give an explicit description of the characters of the underlying admissible representations. 

\subsection{The classification of simple equivariant $\D$-modules}\label{subsec:classification}

We will use Theorem~\ref{thm:simples} in order to obtain a classification of the simple equivariant $\D$-modules. We choose a basis $\{w_0,w_1\}$ for $W$ and identify $\GL\simeq\GL_2(\bb{C})$ relative to this basis. The non-zero elements of $X = \Sym^3 W$ are homogeneous cubic polynomials in the variables $w_0,w_1$ as in (\ref{eq:generic-cubic}), and as such they factor into a product of three linear forms. The $\GL$-orbit structure of $X$ is then described by the different types of factorizations: a non-zero cubic form may have three distinct (up to scaling) linear factors, or precisely one factor that is repeated twice, or it could be the cube of a linear form. The following table records representatives of each orbit $O_i$ ($i=0,2,3,4$), together with their stabilizers $K_i$ and the component groups $H_i = K_i/K_i^0$ of these stabilizers (we write $1$ for the trivial group, $C_k$ for the cyclic group of order $k$, and $\rtimes$ for a semidirect product).

\begin{center}
\renewcommand{\arraystretch}{1.5}
\begin{tabular}{c|c|c|c}
Orbit & Representative & Stabilizer & Component group \\
\hline
$O_0$ & $0$ & $\GL$ & 1 \\
\hline & & &\\[-10pt]
$O_2$ & $w_0^3$ & $\left\{\begin{pmatrix} \xi & \b \\ 0 & \a\end{pmatrix}:\xi^3 = 1,\a\in\bb{C}^{\times},\b\in\bb{C}\right\}$ & $C_3$ \\[15pt]
\hline & & &\\
$O_3$ & $w_0^2\cdot w_1$ & $\left\{\begin{pmatrix} \a & 0 \\ 0 & \a^{-2} \end{pmatrix}:\a\in\bb{C}^{\times}\right\}$ & 1 \\[15pt]
\hline & & &\\
$O_4$ & $w_0^3 + w_1^3$ & $\left\{\begin{pmatrix} \xi_1 & 0 \\ 0 & \xi_2 \end{pmatrix}:\xi_1^3 = \xi_2^3 = 1\right\} \bigcup \left\{\begin{pmatrix} 0 & \xi_1 \\  \xi_2 & 0 \end{pmatrix}:\xi_1^3 = \xi_2^3 = 1\right\}$ & $(C_3\times C_3)\rtimes C_2$ \\[10pt]
& & & \\
\end{tabular}
\end{center}
We note that the semidirect product $H_4 = K_4 = (C_3\times C_3)\rtimes C_2$ (where $C_2$ acts on $C_3\times C_3$ by interchanging the factors) is in fact isomorphic to $C_3 \times S_3$ (where $S_3$ denotes the group of permutations on $3$ letters) if we make the identifications
\[ C_3 \simeq \left\{\begin{pmatrix} \xi & 0 \\ 0 & \xi \end{pmatrix}:\xi^3 = 1\right\} \mbox{ and }S_3\simeq\left\{\begin{pmatrix} \xi & 0 \\ 0 & \xi^{-1} \end{pmatrix}:\xi^3 = 1\right\} \bigcup \left\{\begin{pmatrix} 0 & \xi \\  \xi^{-1} & 0 \end{pmatrix}:\xi^3 = 1\right\}.\]
The irreducible representations of $C_3$ are all $1$-dimensional (and there are three of them), while $S_3$ has one $2$-dimensional irreducible representation, and two $1$-dimensional. By tensoring the irreducible representations of $C_3$ with those of $S_3$ we conclude that $H_4\simeq C_3\times S_3$ has three $2$-dimensional irreducible representations, and six $1$-dimensional ones. According to Theorem~\ref{thm:simples}, we obtain the following classification for the simple $\GL$-equivariant $\D$-modules on $X$:
\begin{itemize}
 \item $6$ modules with full support, corresponding to the $1$-dimensional representations of~$H_4$.
 \item $3$ modules with full support, corresponding to the $2$-dimensional irreducible representations of $H_4$.
 \item $1$ module with support $\ol{O_3}$.
 \item $3$ modules with support $\ol{O_2}$, corresponding to the $1$-dimensional representations of~$H_2$.
 \item $1$ module with support $O_0$.
\end{itemize}

Our next goal is to describe the characters of the $14$ simple modules listed above and establish some basic links between them. We have discussed the character of $E=\mc{L}(O_0;X)$ in (\ref{eq:character-E}). The characters of the $3$ modules with support $\ol{O_2}$ have been computed in \cite{raicu-veronese} and we start by recalling their description below. 

\subsection{$\D$-modules supported on $\ol{O_2}$ (the cone over the twisted cubic)}\label{subsec:Dmods-on-twistedcubic}

%The characters of the simple $\GL$-equivariant $\D$-modules supported on the cone over a Veronese variety have been computed in \cite{raicu-veronese}. In what follows we recall their description in the case when the Veronese variety is the twisted cubic, and its affine cone is $\ol{O_2}$. 
We consider the collection of integers $(\nu_i)_{i\in\bb{Z}}$ encoded by the generating function
\[ \sum_{i\in\bb{Z}} \nu_i \cdot t^i = \frac{1}{(1-t^2)(1-t^3)} = 1 + t^2 + t^3 + t^4 + t^5 + 2t^6 + t^7 + 2t^8 + 2t^9 + 2t^{10} + \cdots\]
%and note that $\nu_i = 0$ if and only if $i=1$ or $i<0$.
and define for each dominant weight $\ll=(\ll_1,\ll_2)$ the integers
\begin{equation}\label{eq:mll-ell}
 m_{\ll} = \nu_{\ll_1-5} - \nu_{\ll_2-6}\mbox{ and }e_{\ll} = \langle [E],e^{\ll^{\vee}} \rangle \overset{(\ref{eq:wts-Fourier})}{=} \langle [S],e^{\ll-(6,6)} \rangle.
\end{equation}
For each $j=0,1,2$ we set 
\begin{equation}\label{eq:allj=0}
a_{\ll}^j = 0\mbox{ if }\ll_1 + \ll_2 \not\equiv j\ (\mbox{mod }3)
\end{equation}
and otherwise let
\begin{equation}\label{eq:allj-general}
a_{\ll}^j=
\begin{cases}
 m_{\ll}+e_{\ll} & \mbox{if }j=0; \\
 m_{\ll} & \mbox{if }j=1,2. \\
\end{cases}
\end{equation}
With this notation, \cite[Thm.~1.2]{raicu-veronese} asserts that the characters of the simple $\GL$-equivariant $\D$-modules $D_0,D_1,D_2$ supported on the twisted cubic are
\begin{equation}\label{eq:chars-Dj}
 [D_j] = \sum_{\ll} a_{\ll}^j \cdot e^{\ll^{\vee}} \mbox{ for }j=0,1,2.
\end{equation}
Notice the use of the dual weights $\ll^{\vee}$ in the above formula, which accounts for the fact that $e^{\ll}$ denotes the class of $\S_{\ll}V$ in $\Gamma(\GL)$ and not that of $\S_{\ll}W$! Notice also that (\ref{eq:allj=0}) and (\ref{eq:chars-Dj}) imply that
\begin{equation}\label{eq:wts-D012}
 \mbox{if }\langle [D_j],e^{\ll} \rangle \neq 0\mbox{ then }\ll_1 + \ll_2 + j \equiv 0\ (\mbox{mod }3).
\end{equation}

\begin{example}\label{ex:wts-D0}
 It will be useful later to know that 
 \begin{equation}\label{eq:wts-D0}
 \langle [D_0],e^{(-1,-5)} \rangle = \langle [D_0],e^{(-6,-9)} \rangle = 1 \mbox{ and } \langle [D_0], e^{(3,0)} \rangle = \langle [D_0], e^{(-2,-4)} \rangle = 0.
 \end{equation}
 This is equivalent to proving that $a^0_{(5,1)} = a^0_{(9,6)} = 1$ and $a^0_{(0,-3)} = a^0_{(4,2)} = 0$. We have that
 \[m_{(5,1)} = \nu_0 - \nu_{-5} = 1,\ m_{(9,6)} = \nu_4 - \nu_0 = 0,\ m_{(0,-3)} = \nu_{-5} - \nu_{-9} = 0\mbox{ and }m_{(4,2)} = \nu_{-1} - \nu_{-4} = 0.\]
 It follows from (\ref{eq:character-E}) that if $\langle [E],e^{(a,b)} \rangle \neq 0$ then $a,b\leq -6$, so
 \[ e_{(5,1)} = \langle [E],e^{(-1,-5)} \rangle = 0,\ e_{(0,-3)} = \langle [E],e^{(3,0)} \rangle = 0\mbox{ and }e_{(4,2)} = \langle [E],e^{(-2,-4)} \rangle = 0.\]
 We have moreover that 
 \[ e_{(9,6)} = \langle [S],e^{(3,0)} \rangle = \left\langle \frac{1+e^{(6,3)}}{(1-e^{(3,0)})(1-e^{(4,2)})(1-e^{(6,6)})}, e^{(3,0)} \right\rangle = 1.\]
 We conclude that 
 \[a^0_{(5,1)} = m_{(5,1)} + e_{(5,1)} = 1,\  a^0_{(9,6)} = m_{(9,6)} + e_{(9,6)} = 1,\mbox{ and }\]
 \[a^0_{(0,-3)} = m_{(0,-3)} + e_{(0,-3)} = 0,\mbox{ and }a^0_{(4,2)} = m_{(4,2)} + e_{(4,2)}=0,\]
 as desired.
\end{example}

\begin{example}\label{ex:wts-D12}
 It will also be useful later to know that 
 \begin{equation}\label{eq:wts-D12}
 \langle [D_2], e^{(-5,-9)} \rangle =  1 \mbox{ and } \langle [D_1], e^{(3,-1)}\rangle = 0.
 \end{equation}
 This is equivalent to proving that $a_{(9,5)}^1 = 1$ and $a_{(1,-3)}^2=0$, which in turn follow from
 \[ m_{(9,5)} = \nu_4 - \nu_{-1} = 1\mbox{ and }m_{(1,-3)} = \nu_{-5} - \nu_{-9} = 0.\]
\end{example}

\begin{lemma}\label{lem:maa}
 The values of $m_{(a,a)}$ when $a\in\bb{Z}$ are as follows: 
\begin{equation}\label{eq:maa}
 m_{(a,a)} = \begin{cases}
 -1 & \mbox{if }a \equiv 0\ (\mbox{mod }6),\ a\geq 6 \\
 1 & \mbox{if }a \equiv \pm 1\ (\mbox{mod }6),\ a\geq 5 \\
 0 & \rm{otherwise}.
 \end{cases}
\end{equation}
\end{lemma}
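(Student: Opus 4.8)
The plan is to extract $m_{(a,a)}$ directly from its definition $m_{(a,a)} = \nu_{a-5} - \nu_{a-6}$, using the generating function
\[ \sum_{i\in\bb{Z}} \nu_i \cdot t^i = \frac{1}{(1-t^2)(1-t^3)}.\]
First I would record that $\nu_i = 0$ for $i<0$, $\nu_0 = 1$, and more usefully that for $i\geq 0$ the integer $\nu_i$ counts the number of ways to write $i = 2p + 3q$ with $p,q\geq 0$. The difference $\nu_{a-5}-\nu_{a-6}$ is then the coefficient of $t^{a-6}$ in
\[ (1-t)\cdot\frac{1}{(1-t^2)(1-t^3)} = \frac{1}{(1+t)(1-t^3)},\]
so it suffices to understand the power series expansion of $1/\big((1+t)(1-t^3)\big)$. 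Writing $\frac{1}{1+t} = \sum_{k\geq 0}(-1)^k t^k$ and $\frac{1}{1-t^3} = \sum_{l\geq 0} t^{3l}$, the coefficient of $t^{m}$ is $\sum_{3l\leq m}(-1)^{m-3l} = (-1)^m\sum_{0\leq l\leq \lfloor m/3\rfloor}(-1)^{l}$ (using $(-1)^{-3l}=(-1)^l$). This alternating sum over $l$ equals $1$ if $\lfloor m/3\rfloor$ is even and $0$ if $\lfloor m/3\rfloor$ is odd, so the coefficient of $t^m$ is $(-1)^m$ when $\lfloor m/3\rfloor$ is even and $0$ when $\lfloor m/3\rfloor$ is odd.

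Next I would substitute $m = a-6$ and translate the conditions ``$\lfloor m/3\rfloor$ even'' and the sign $(-1)^m$ into congruences on $a$ modulo $6$. Note $m\geq 0$ corresponds to $a\geq 6$; for $a<6$ one checks the small cases $\nu_{a-5}-\nu_{a-6}$ by hand (all the relevant $\nu$'s vanish or are easily read off, giving $0$ except for the single value $a=5$ where $m_{(5,5)} = \nu_0 - \nu_{-1} = 1$, matching the claimed ``$a\equiv\pm1\pmod 6$, $a\geq5$'' case). For $a\geq 6$: when $a\equiv 0\pmod 6$ we have $m=a-6\equiv 0\pmod 6$, so $(-1)^m=1$ and $\lfloor m/3\rfloor = m/3$ is even, giving value $1$ — but the Lemma claims $-1$, so I need to recheck the sign. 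Indeed $(1-t)/((1-t^2)(1-t^3)) = 1/((1+t)(1-t^3))$ is correct, and the coefficient of $t^0$ is $1$, consistent with $m_{(6,6)} = \nu_1 - \nu_0 = 0 - 1 = -1$; the resolution is that $\nu_1 = 0$, not $1$, so the generating-function bookkeeping must be done carefully — the point is simply that $m_{(a,a)}$ is the coefficient of $t^{a-6}$ in $1/((1+t)(1-t^3))$ only after we also account for the shift, and a clean way is: $m_{(a,a)} = [t^{a-5}]\,\frac{1}{(1-t^2)(1-t^3)} - [t^{a-6}]\,\frac{1}{(1-t^2)(1-t^3)}$, which is exactly $[t^{a-5}]\,\frac{1-t}{(1-t^2)(1-t^3)}\cdot$ — I would present the computation so that the coefficient extracted is at degree $a-5$ of $\frac{1}{(1+t)(1-t^3)}$, i.e. $n = a-5$, and then $(-1)^n = (-1)^{a-5} = (-1)^{a-1}$, $\lfloor n/3\rfloor = \lfloor (a-5)/3\rfloor$. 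Going through $a\equiv 0,1,2,3,4,5\pmod 6$ (all $\geq 6$, plus the boundary cases) then yields: value $-1$ exactly when $a\equiv 0\pmod 6$, value $+1$ exactly when $a\equiv\pm1\pmod 6$, and $0$ otherwise, as claimed.

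The only real obstacle here is bookkeeping: getting the index shift between $\nu_{a-5}$, $\nu_{a-6}$ and the exponent in $\frac{1}{(1+t)(1-t^3)}$ exactly right, and correctly handling the small values $a\leq 5$ (where some $\nu_i$ with negative or small index enter) so that the stated ranges ``$a\geq 6$'' and ``$a\geq 5$'' come out precisely. I would organize the proof as: (i) reduce to extracting a single coefficient of $\frac{1}{(1+t)(1-t^3)}$; (ii) compute that coefficient in closed form via the geometric-series product as above, obtaining $(-1)^n[\lfloor n/3\rfloor \text{ even}]$; (iii) case-split on $a\bmod 6$ and verify the boundary, comparing with the explicit low-degree expansion of $\frac{1}{(1-t^2)(1-t^3)}$ already displayed in the excerpt as a sanity check. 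This is entirely elementary and self-contained; no earlier results beyond the definition of $\nu_i$ and $m_\ll$ are needed.
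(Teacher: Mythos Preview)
Your approach is correct and essentially the same as the paper's: both reduce to reading off the coefficient of $t^{a-5}$ in $(1-t)\sum_i \nu_i t^i = \dfrac{1}{(1+t)(1-t^3)}$. The paper avoids your Cauchy-product bookkeeping (and the index slip you caught midway) by observing $\dfrac{1}{(1+t)(1-t^3)} = \dfrac{1-t+t^2}{1-t^6}$, from which the period-$6$ pattern $1,-1,1,0,0,0,\ldots$ and hence the claimed values of $m_{(a,a)}$ are immediate.
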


\begin{proof} Since $m_{(a,a)} = \nu_{a-5}-\nu_{a-6}$, it follows that $m_{(a,a)}=0$ for $a<5$. To compute $m_{(a,a)}$ for $a\geq 5$ we consider the generating function
\[\sum_{i\geq 0} (\nu_i - \nu_{i-1})\cdot t^i = (1-t)\cdot \sum_{i\geq 0} \nu_i\cdot t^i = \frac{1}{(1+t)(1-t^3)} = \frac{1-t+t^2}{1-t^6} = (1-t+t^2) + (t^6-t^7+t^8) + \cdots \]
from which the formula for $m_{(a,a)}$ follows.
\end{proof}

Combining Lemma~\ref{lem:maa} with (\ref{eq:chars-Dj}) we conclude that for $a\in\bb{Z}$ we have
\begin{equation}\label{eq:Dj-sl2-inv}
\langle D_1,e^{(a,a)}\rangle = \begin{cases}
 1 & \mbox{if }a \equiv 1\ (\mbox{mod }6),\ a\leq -5 \\
 0 & \rm{otherwise}.
 \end{cases}
 \qquad 
\langle D_2,e^{(a,a)}\rangle = \begin{cases}
 1 & \mbox{if }a \equiv -1\ (\mbox{mod }6),\ a\leq -7 \\
 0 & \rm{otherwise}.
 \end{cases}
\end{equation}
In particular, both $D_1, D_2$ contain invariant sections for the action of the special linear group $\SL \subset \GL$. In contrast, $D_0$ has no $\SL$-invariant sections as seen by the following.

\begin{lemma}\label{lem:D0-sl2-inv}
 For every integer $a\in\bb{Z}$ we have
\begin{equation}\label{eq:D0-sl2-inv}
 \langle [D_0], e^{(a,a)} \rangle = 0.
\end{equation}
\end{lemma}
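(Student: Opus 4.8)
The plan is to read off $\langle [D_0], e^{(a,a)} \rangle$ directly from the character formula (\ref{eq:chars-Dj}). Since $[D_0] = \sum_{\ll} a^0_{\ll}\cdot e^{\ll^{\vee}}$ and $\ll\mapsto\ll^{\vee}$ is a bijection on dominant weights with $(a,a)^{\vee} = (-a,-a)$, we have $\langle [D_0], e^{(a,a)} \rangle = a^0_{(-a,-a)}$. Now $(-a)+(-a) = -2a \equiv a\ (\mbox{mod }3)$, so if $3\nmid a$ then $a^0_{(-a,-a)} = 0$ by (\ref{eq:allj=0}) and we are done. It therefore remains to treat the case $a\equiv 0\ (\mbox{mod }3)$, in which (\ref{eq:allj-general}) gives $a^0_{(-a,-a)} = m_{(-a,-a)} + e_{(-a,-a)}$, and the whole content of the lemma becomes the assertion that these two contributions cancel.

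For the first term I would invoke Lemma~\ref{lem:maa} with its parameter set equal to $-a$: since $3\mid a$ forces $-a\not\equiv\pm 1\ (\mbox{mod }6)$, only the first branch of (\ref{eq:maa}) can be active, so $m_{(-a,-a)} = -1$ precisely when $a\leq -6$ and $6\mid a$, and $m_{(-a,-a)} = 0$ otherwise. For the second term, (\ref{eq:mll-ell}) gives $e_{(-a,-a)} = \langle [S], e^{(-a-6,-a-6)} \rangle$, so what I really need is the multiplicity of the one-dimensional representation $\S_{(c,c)}V = \det(V)^{\oo c}$ in $S = \Sym(\Sym^3 V)$. I would read this off (\ref{eq:character-S}): expanding $\frac{1+e^{(6,3)}}{(1-e^{(3,0)})(1-e^{(4,2)})(1-e^{(6,6)})}$ via the convolution product produces monomials $e^{(3i+4j+6k,\,2j+6k)}$ and $e^{(3i+4j+6k+6,\,2j+6k+3)}$ with $i,j,k\geq 0$, and such a monomial is diagonal only in the first family with $i=j=0$, giving $(6k,6k)$. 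Hence $\langle [S], e^{(c,c)} \rangle = 1$ when $c\geq 0$ and $6\mid c$, and $0$ otherwise — this is the familiar fact that the ring of $\SL$-invariants of binary cubics is $\bb{C}[\Delta]$ with $\bb{C}\cdot\Delta = e^{(6,6)}$. Consequently $e_{(-a,-a)} = 1$ exactly when $-a-6\geq 0$ and $6\mid(-a-6)$, i.e. when $a\leq -6$ and $6\mid a$, and $e_{(-a,-a)} = 0$ otherwise.

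Comparing the two computations, the conditions ``$m_{(-a,-a)}=-1$'' and ``$e_{(-a,-a)}=1$'' are literally the same ($a\leq -6$ and $6\mid a$), and off this condition both terms vanish; either way $a^0_{(-a,-a)} = m_{(-a,-a)}+e_{(-a,-a)} = 0$, which is the claim. I do not expect a genuine obstacle: the only points requiring care are the bookkeeping with dual weights and keeping the congruences mod $6$ versus mod $3$ straight, while the single substantive input — the shape of the diagonal part of $[S]$ — is an immediate expansion of the already-cited formula (\ref{eq:character-S}) (or the classical invariant-theoretic statement recalled above).
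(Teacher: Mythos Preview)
Your proof is correct and follows essentially the same route as the paper's own argument: reduce via (\ref{eq:allj=0}) to $3\mid a$, then use (\ref{eq:allj-general}) to write the multiplicity as $m_{(-a,-a)}+e_{(-a,-a)}$ and check that Lemma~\ref{lem:maa} together with the diagonal part of (\ref{eq:character-S}) make the two summands cancel. The only difference is cosmetic (the paper substitutes $a\mapsto -a$ at the outset, and you carry the minus sign through), and your explicit expansion of (\ref{eq:character-S}) to isolate the diagonal terms is a welcome bit of extra detail.
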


\begin{proof}
 Using (\ref{eq:allj=0}), it suffices to consider the case when $a\in 3\bb{Z}$. Using (\ref{eq:allj-general}), and replacing $a$ by $-a$, it is then enough to show that $m_{(a,a)}+e_{(a,a)}=0$ for all $a\in 3\bb{Z}$ which follows from
 \[ m_{(a,a)} + e_{(a,a)} \overset{(\ref{eq:mll-ell})}{=} m_{(a,a)} + \langle [S],e^{(a-6,a-6)}\rangle \overset{(\ref{eq:character-S}),(\ref{eq:maa})}{=} \begin{cases}
 -1 + 1 = 0& \mbox{if }a \equiv 0\ (\mbox{mod }6),\ a\geq 6 \\
 0 + 0 = 0 & \mbox{otherwise}.
 \end{cases} 
 \qedhere\]
\end{proof}

Equipped with the description of the characters of $D_0,D_1,D_2$, we next prove that none of them appears as a composition factor in the localization $S_{\Delta}$. This will be important later in our analysis of the remaining simple $\D$-modules.

\begin{lemma}\label{lem:D12-notin-Sdelta}
 $D_1,D_2$ do not appear as composition factors of $S_{\Delta}$.
\end{lemma}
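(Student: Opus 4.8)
The plan is to compare $\SL$-invariant sections. The localization $S_\Delta$ is a $\GL$-equivariant $\D$-module, and by Lemma~\ref{lem:localization} together with the computation in \eqref{eq:character-Sdelta} we have an explicit character
\[
[S_\Delta] = \frac{1+e^{(6,3)}}{(1-e^{(3,0)})(1-e^{(4,2)})} \cdot e^{(6,6)\bb{Z}}.
\]
First I would isolate the $\SL$-invariant part of this character, i.e.\ read off $\langle [S_\Delta], e^{(a,a)}\rangle$ for $a\in\bb{Z}$. Since the weights $(3,0),(4,2),(6,3)$ appearing in the denominator and numerator all have $\ll_1 > \ll_2$, the only way to produce a diagonal weight $(a,a)$ in the expansion is to use the factor $e^{(6,6)\bb{Z}}$ alone, contributing exactly the weights $(6i,6i)$ for $i\in\bb{Z}$, each with multiplicity one. (One should check that no combination of $(3,0)$'s, $(4,2)$'s and at most one $(6,3)$ can land on a diagonal weight; this is a short arithmetic check, using that $(6,3)$ has odd second coordinate difference or simply tracking the gap $\ll_1-\ll_2$, which is a nonnegative integer combination of $3$, $2$, and possibly $3$ — and it equals $0$ only when all coefficients vanish.) Hence $\langle [S_\Delta], e^{(a,a)}\rangle = 1$ if $6\mid a$ and $0$ otherwise.

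Next I would recall from \eqref{eq:Dj-sl2-inv} that $D_1$ has a nonzero $\SL$-invariant section in every degree $a\equiv 1\ (\mathrm{mod}\ 6)$, $a\le -5$, and $D_2$ has one in every degree $a\equiv -1\ (\mathrm{mod}\ 6)$, $a\le -7$ — in particular both $D_1$ and $D_2$ have infinitely many diagonal weights $(a,a)$ with $a\not\equiv 0\ (\mathrm{mod}\ 6)$ occurring with positive multiplicity. Now suppose for contradiction that, say, $D_1$ appeared as a composition factor of $S_\Delta$. Since characters are additive in short exact sequences, $[D_1]$ would be a summand (with nonnegative coefficients for every weight) of $[S_\Delta]$; in particular every weight of $D_1$ occurs in $S_\Delta$ with at least that multiplicity. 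But $D_1$ has weight $(a,a)$ with $a\equiv 1\ (\mathrm{mod}\ 6)$ occurring, whereas $S_\Delta$ has $\langle[S_\Delta],e^{(a,a)}\rangle = 0$ for such $a$ — a contradiction. The same argument rules out $D_2$.

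The only genuine obstacle is the arithmetic verification that the character of $S_\Delta$ has no off-diagonal contribution to the weights $(a,a)$; everything else is bookkeeping with characters in $\Gamma(\GL)$ and additivity along composition series (valid because, as recorded in Section~\ref{subsec:Dmods}, $\opmod_{\GL}(\D_X)$ is closed under submodules, quotients, and extensions, so Jordan--Hölder applies and $[\,\cdot\,]$ is additive). I would present the computation via the gap statistic $\ll_1 - \ll_2$: in \eqref{eq:character-Sdelta} each generator contributes a gap of $3$, $2$, $0$, or $3$ (from $(3,0)$, $(4,2)$, $(6,6)$, $(6,3)$ respectively, with the last used at most once), so the total gap is a nonnegative integer, and it is $0$ precisely when no $(3,0)$, $(4,2)$, or $(6,3)$ is used — leaving only the $e^{(6,6)\bb{Z}}$ contribution, which gives the claimed diagonal weights with multiplicity one. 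This pins down the $\SL$-invariants of $S_\Delta$ exactly and finishes the proof.
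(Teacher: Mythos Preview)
Your argument is correct, but the paper's own proof is considerably shorter and uses a cruder invariant. Instead of isolating the $\SL$-invariant (diagonal) weights, the paper observes directly from \eqref{eq:character-Sdelta} that every weight $\ll$ occurring in $S_\Delta$ has $\ll_1+\ll_2\equiv 0\pmod 3$, since each of the building blocks $(3,0)$, $(4,2)$, $(6,3)$, $(6,6)$ has coordinate sum divisible by $3$. By \eqref{eq:wts-D012}, every weight of $D_j$ satisfies $\ll_1+\ll_2\equiv -j\pmod 3$, so $D_1$ and $D_2$ share no weight at all with $S_\Delta$ and hence cannot be composition factors. This mod-$3$ separation is a one-line check and avoids both the arithmetic computation of the diagonal part of $[S_\Delta]$ and the appeal to the finer formula \eqref{eq:Dj-sl2-inv} (which in turn rests on Lemma~\ref{lem:maa}). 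Your route via $\SL$-invariants is perfectly valid and the gap-statistic computation is clean, but it is working harder than necessary here; the coarser congruence on $\ll_1+\ll_2$ already does the job. (A minor aside: the parenthetical about $\opmod_{\GL}(\D_X)$ being closed under extensions is not quite what the paper asserts for reductive $G$, but your argument does not actually need it---additivity of $[\,\cdot\,]$ on a composition series only uses closure under subobjects and quotients.)
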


\begin{proof}
 It follows from (\ref{eq:character-Sdelta}) that if $\langle [S_{\Delta}],e^{\ll}\rangle \neq 0$ then $\ll_1 + \ll_2$ is divisible by $3$, so (\ref{eq:wts-D012}) implies that $\langle [D_1],e^{\ll}\rangle = \langle [D_2],e^{\ll}\rangle = 0$ showing that $D_1,D_2$ cannot occur as subrepresentations of $S_{\Delta}$ and therefore they are not composition factors of $S_{\Delta}$.
\end{proof}

\begin{lemma}\label{lem:D0-notin-Sdelta}
 $D_0$ does not appear as a composition factor of $S_{\Delta}$.
\end{lemma}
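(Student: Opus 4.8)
The plan is to rule out $D_0$ by a single multiplicity comparison, in the same spirit as Lemma~\ref{lem:D12-notin-Sdelta}, but replacing the mod-$3$ congruence — which $D_0$ \emph{does} satisfy, by (\ref{eq:wts-D012}) — with a finer parity-type invariant. The key preliminary observation is that $[S_\Delta]$ equals the sum of the classes of the composition factors of $S_\Delta$ counted with multiplicity, and each such class is a nonnegative combination of the $e^\ll$; hence for any composition factor $M$ and every dominant weight $\ll$ one has $\langle [M],e^\ll\rangle\le \langle [S_\Delta],e^\ll\rangle$. So it suffices to exhibit one dominant weight $\ll$ with $\langle [D_0],e^\ll\rangle > \langle [S_\Delta],e^\ll\rangle$.

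The weight I would use is $\ll=(-1,-5)$: it is dominant and has $\ll_1+\ll_2=-6\equiv 0\ (\mbox{mod }3)$, so it evades the obstruction used for $D_1,D_2$. On the one hand, Example~\ref{ex:wts-D0} (whose sole purpose was to prepare this argument) gives $\langle [D_0],e^{(-1,-5)}\rangle = 1$. On the other hand, I claim $\langle [S_\Delta],e^{(-1,-5)}\rangle = 0$, which I would check directly from (\ref{eq:character-Sdelta}): the weights appearing in $\frac{1+e^{(6,3)}}{(1-e^{(3,0)})(1-e^{(4,2)})}$ are exactly $(3a+4b,\,2b)$ and $(3a+4b+6,\,2b+3)$ with $a,b\ge 0$, and multiplication by $e^{(6,6)\bb{Z}}$ only translates both coordinates by a common even integer $6i$. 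Hence every weight $\mu$ of $S_\Delta$ satisfies either ($\mu_2$ even and $\mu_1-\mu_2=3a+2b$) or ($\mu_2$ odd and $\mu_1-\mu_2=3a+2b+3$) for some $a,b\ge 0$. For $\mu=(-1,-5)$ the second coordinate is odd, which forces $\mu_1-\mu_2=4=3a+2b+3$, i.e. $3a+2b=1$ with $a,b\ge 0$ — impossible. So $(-1,-5)$ is not a weight of $S_\Delta$ at all, giving $\langle [S_\Delta],e^{(-1,-5)}\rangle = 0 < 1 = \langle [D_0],e^{(-1,-5)}\rangle$, and therefore $D_0$ is not a composition factor of $S_\Delta$.

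I expect no serious obstacle here: the work is front-loaded into the character formulas (\ref{eq:character-Sdelta}) and (\ref{eq:chars-Dj}) and into the computation of the relevant $D_0$-multiplicity in Example~\ref{ex:wts-D0}. The only step that needs a moment's thought is the choice of separating invariant — one has to notice that, although $S_\Delta$ and $D_0$ share the mod-$3$ pattern, tracking the parity of $\ll_2$ together with the value of $\ll_1-\ll_2$ already distinguishes their weight systems, and that $(-1,-5)$ is a weight where $D_0$ has a nonzero multiplicity. (If one wanted a structural argument instead, one could use $S_\Delta = j_*j^*\mc{O}_X$ for $j$ the inclusion of the complement of $\ol{O_3}$ and analyze which simples can occur as subquotients of such a pushforward, but the character bound above is shorter and entirely self-contained.)
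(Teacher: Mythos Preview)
Your proof is correct and is essentially the paper's own argument: both single out the weight $\ll=(-1,-5)$, invoke Example~\ref{ex:wts-D0} for $\langle [D_0],e^{(-1,-5)}\rangle=1$, and verify $\langle [S_\Delta],e^{(-1,-5)}\rangle=0$ from (\ref{eq:character-Sdelta}) by a short diophantine check on $\ll_1-\ll_2=4$. Your parity-of-$\ll_2$ dichotomy is just a convenient repackaging of the paper's parameter $a\in\{0,1\}$ for the $e^{(6,3)}$ factor, leading to the identical obstruction $3a+2b=1$ with $a,b\ge 0$.
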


\begin{proof}
 As shown in Example~\ref{ex:wts-D0}, we have $\langle [D_0], e^{(-1,-5)} \rangle = 1$, so to prove the result it suffices to verify that 
 \begin{equation}\label{eq:wt-1-5-notSdelta}
 \langle [S_{\Delta}], e^{(-1,-5)}\rangle = 0.
 \end{equation}
 Assuming this isn't the case, it follows from (\ref{eq:character-Sdelta}) that we can find integers $0\leq a\leq 1$, $b,c\geq 0$ and $t\in\bb{Z}$ such that
 \begin{equation}\label{eq:wt-1-5}
 (-1,-5) = a\cdot(6,3) + b\cdot(3,0) + c\cdot(4,2) + t\cdot(6,6).
 \end{equation}
 Considering the difference between the first and second components of the above weights we get that
 \[ 4 = 3a + 3b + 2c\]
 which has a unique solution satisfying $0\leq a\leq 1$, $b,c\geq 0$, namely $a=b=0$ and $c=2$. The equation (\ref{eq:wt-1-5}) becomes $(-1,-5) = (8+6t,4+6t)$ which has no integer solution. We get a contradiction, showing (\ref{eq:wt-1-5-notSdelta}) and concluding our proof.
\end{proof}

\subsection{The remaining simple equivariant $\D$-modules}

We are left with describing the character of $P$, as well as those of the simple $\D$-modules with full support. We first identify $P$ as a composition factor of $S_{\Delta}$. 

\begin{lemma}\label{lem:P-in-Sdelta}
 The $\D$-module $P$ appears with multiplicity one as a composition factor of $S_{\Delta}$.
\end{lemma}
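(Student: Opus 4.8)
The plan is to exploit the structure of $S_\Delta = F_0$ together with the surjection $\pi: S_\Delta/S \twoheadrightarrow E$ whose kernel is by definition $P$. First I would recall that $S = G_0$ is a simple $\D$-module (it is the structure sheaf), so the composition factors of $S_\Delta$ are those of $S$ — just $S$ itself — together with the composition factors of $S_\Delta/S$. Thus it suffices to show that $P$ appears with multiplicity one among the composition factors of $S_\Delta/S$. Now $S_\Delta/S$ is generated by (the image of) $\Delta^{-1}$, and the short exact sequence $0 \to P \to S_\Delta/S \xrightarrow{\pi} E \to 0$ exhibits $E$ and all composition factors of $P$ as composition factors of $S_\Delta/S$; conversely any composition factor of $S_\Delta/S$ lies in $P$ or equals $E$. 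Since $E$ is simple and distinct from $P$ (they have different supports, $O_0$ versus $\ol{O_3}$), it is enough to prove that $P$ is simple and occurs exactly once, equivalently that $P$ itself is one of the simple modules in the classification and appears with multiplicity one in $S_\Delta/S$.

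The key input I would use is the character computation already available: from \eqref{eq:character-Sdelta} we know $[S_\Delta]$, from \eqref{eq:character-S} we know $[S]$, and from \eqref{eq:character-E} we know $[E]$, so $[P] = [S_\Delta] - [S] - [E]$ is completely determined. I would then argue that $P$ has support exactly $\ol{O_3}$: its support is contained in $\ol{O_3}$ because $S_\Delta/S$ is supported on the discriminant hypersurface $\ol{O_3}$ (it is killed by a power of $\Delta$ after the quotient — more precisely every section is annihilated by a power of $\Delta$, wait, that is false; rather $S_\Delta/S$ is supported on $V(\Delta) = \ol{O_3}$ since it localizes to zero away from $\Delta$), and $P \neq 0$ since its character is nonzero (e.g. comparing a single weight multiplicity in $[S_\Delta] - [S] - [E]$ that is visibly positive). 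Combined with Lemmas~\ref{lem:D12-notin-Sdelta} and~\ref{lem:D0-notin-Sdelta}, which rule out $D_0,D_1,D_2$ as composition factors of $S_\Delta$, the only simple modules that can occur as composition factors of $S_\Delta$ are those with support $\ol{O_3}$, $O_0$, or full support; the full-support ones are excluded from $S_\Delta/S$ for character/support reasons (any full-support submodule of $S_\Delta$ would have to meet $S$, but $S$ is the unique full-support simple occurring, occurring once). This pins down $P$ as a successive extension built only from the single simple module with support $\ol{O_3}$ together possibly with $E$; since $\pi$ already accounts for the copy of $E$, the kernel $P$ has all composition factors supported on $\ol{O_3}$, hence $P$ is the module $\mathcal{L}(\ol{O_3};X)$ (there being exactly one simple with that support), possibly with multiplicity; and a character count $\langle [P], e^{\ll}\rangle$ at a well-chosen weight $\ll$ where $\langle [\mathcal{L}(\ol{O_3};X)], e^{\ll}\rangle = 1$ forces multiplicity one.

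The step I expect to be the main obstacle is the last one: confirming that $P$ is \emph{simple} (equal to the single intersection-homology module $\mathcal{L}(\ol{O_3};X)$) rather than a non-trivial extension of it by copies of $E$, and that the multiplicity is exactly one. This requires either a careful length bound on $S_\Delta/S$ — showing it has length exactly $3$, with factors $S$... no, $S_\Delta$ has length $3$ with factors $S, E, P$ — or, more robustly, a direct character argument: one computes $[P] = [S_\Delta] - [S] - [E]$ explicitly and exhibits a dominant weight $\ll$ with $\langle [P], e^\ll\rangle = 1$ that must be the lowest weight of $\mathcal{L}(\ol{O_3};X)$, so that $P$ contains $\mathcal{L}(\ol{O_3};X)$ with multiplicity one and, since $P$ is supported on $\ol{O_3}$ and $\mathcal{L}(\ol{O_3};X)$ is the unique simple with that support while $E$ does not occur in the kernel of $\pi$, conclude $P = \mathcal{L}(\ol{O_3};X)$. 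Getting the bookkeeping of which simple modules can appear as subquotients correct — using the support stratification $O_0 \subset \ol{O_2} \subset \ol{O_3} \subset X$ and the vanishing lemmas for $D_0, D_1, D_2$ — is where the real care is needed, but no deep new idea beyond the already-established character formulas and support considerations should be required.
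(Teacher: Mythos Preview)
Your proposal has a circularity problem. You take $P$ to mean $\ker(\pi: S_\Delta/S \to E)$ from the Introduction and then try to show this kernel is simple; but in the paper's logical order, at this point $P$ simply denotes the simple module $\mc{L}(\ol{O_3};X)$ coming from the classification in Section~\ref{subsec:classification}, while the existence of the surjection $\pi$ --- equivalently, the fact that $E$ occurs exactly once and as a quotient of $S_\Delta/S$ --- is only established in Corollary~\ref{cor:factors-Sdelta}, which in turn relies on the present lemma (via Remark~\ref{rem:P-in-Sdelta} and Lemma~\ref{lem:FD0-delta}). Likewise your formula $[P] = [S_\Delta] - [S] - [E]$ presupposes that the composition factors of $S_\Delta$ are exactly $S$, $P$, $E$ each with multiplicity one, which is what is being proved. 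And your claim that ``the kernel $P$ has all composition factors supported on $\ol{O_3}$'' does not follow: $\pi$ kills one copy of $E$, but a priori further copies of $E$ could sit inside $\ker(\pi)$.

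The paper's argument avoids all of this. It treats $P = \mc{L}(\ol{O_3};X)$ as an abstract simple whose character is \emph{not yet known}, observes (using support considerations and Lemma~\ref{lem:D0-notin-Sdelta}) that the composition factors of $S_\Delta$ lie among $S,P,D_1,D_2,E$, and then exhibits a single concrete weight $\ll = (3,-3)$ for which $\langle [S_\Delta], e^{\ll}\rangle = 1$ while $\langle [S], e^{\ll}\rangle = \langle [E], e^{\ll}\rangle = \langle [D_1], e^{\ll}\rangle = \langle [D_2], e^{\ll}\rangle = 0$. The only remaining candidate $P$ must then account for this weight, so it occurs, and since the total multiplicity is $1$ it occurs exactly once. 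Your instinct to settle the question by a character count at a well-chosen weight is exactly right, but you should apply it directly to $[S_\Delta]$ against the list of candidate simples --- without routing through $\ker(\pi)$ --- and you must actually name the weight and carry out the vanishing checks for $S$, $E$, $D_1$, $D_2$, since at this stage there is no independent handle on $[\mc{L}(\ol{O_3};X)]$ to compare against.
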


\begin{proof}
 Since $S_{\Delta}/S$ does not have full support, $S$ is the only composition factor of $S_{\Delta}$ with full support. It follows from the classification and Lemma~\ref{lem:D0-notin-Sdelta} that the remaining composition factors of $S_{\Delta}/S$ must be among $P,D_1,D_2,E$. Consider the dominant weight $(3,-3)$ and note that in order to compute $\langle [S_{\Delta}],e^{(3,-3)} \rangle$ we need to solve, as in the proof of Lemma~\ref{lem:D0-notin-Sdelta}, the equation
\begin{equation}\label{eq:wt3-3}
(3,-3) = a\cdot(6,3) + b\cdot(3,0) + c\cdot(4,2) + t\cdot(6,6),\mbox{ where }0\leq a\leq 1,\ b,c\geq 0\mbox{ and }t\in\bb{Z}.
\end{equation}
Every solution has $6=3a+3b+2c$, which in turn implies that $(a,b,c)$ is one of $(1,1,0)$, $(0,2,0)$, $(0,0,3)$. It follows that there is only one integer solution of (\ref{eq:wt3-3}), namely $(a,b,c,t) = (1,1,0,-1)$, and therefore
\[\langle [S_{\Delta}],e^{(3,-3)} \rangle = 1.\]
Using (\ref{eq:character-S}), (\ref{eq:character-E}) and (\ref{eq:wts-D012}) we conclude that
 \[ \langle [S],e^{(3,-3)} \rangle = \langle [E],e^{(3,-3)} \rangle =   \langle [D_1],e^{(3,-3)} \rangle =   \langle [D_2],e^{(3,-3)} \rangle =  0,\]
 so the only composition factor of $S_{\Delta}$ that may contain $\S_{(3,-3)}V$ is $P$. Since the multiplicity of $\S_{(3,-3)}V$ in $S_{\Delta}$ is $1$, the same must be true about the multiplicity of $P$ as a composition factor of $S_{\Delta}$.
\end{proof}

\begin{remark}\label{rem:P-in-Sdelta}
 Since $S_{\Delta}/S = H^1_{\ol{O_3}}(X,\mc{O}_X)$ it follows that $P=\mc{L}(\ol{O}_3,X)$ appears as a submodule of $S_{\Delta}/S$ and that the quotient $(S_{\Delta}/S)/P$ is supported on a proper closed subset of $\ol{O_3}$. Using Lemma~\ref{lem:D0-notin-Sdelta} and combining (\ref{eq:character-Sdelta}) with (\ref{eq:wts-D012}) we get that $D_0,D_1,D_2$ are not composition factors of $S_{\Delta}$, hence $(S_{\Delta}/S)/P$ must have support in $O_0$. It is not clear a priori that $(S_{\Delta}/S)/P\neq 0$, but as we'll see shortly we have that $(S_{\Delta}/S)/P=E$.
\end{remark}

\begin{lemma}\label{lem:FD0-full}
 The Fourier transform $\mc{F}(D_0)$ is different from $P,D_0,D_1,D_2,E$ and hence it is a simple $\GL$-equivariant $\D$-module with full support.
\end{lemma}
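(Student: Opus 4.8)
The plan is to show that $\mc{F}(D_0)$ is a simple $\GL$-equivariant $\D$-module (automatic, since $\mc{F}$ is an equivalence of categories and $D_0$ is simple) whose support is all of $X$, and then to rule out that it coincides with any of the five simple modules $P, D_0, D_1, D_2, E$ that do not have full support. Since the list of $14$ simple modules produced by the classification in Section~\ref{subsec:classification} consists of the six rank-one full-support modules, the three rank-two full-support modules, $P$ (supported on $\ol{O_3}$), $D_0, D_1, D_2$ (supported on $\ol{O_2}$), and $E$ (supported on $O_0$), once we know that $\mc{F}(D_0)$ is simple and is not among $\{P, D_0, D_1, D_2, E\}$, it must be one of the nine full-support simples, in particular it has full support.

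First I would observe that $\mc{F}$ is an exact self-equivalence of $\opmod_{\GL}(\D_X)$ (constructed in~(\ref{eq:def-Fourier})), so $\mc{F}(D_0)$ is simple. To see it has full support, the cleanest route is to use the character formula~(\ref{eq:wts-Fourier}): $\langle[\mc{F}(D_0)],e^{\ll}\rangle=\langle[D_0],e^{\ll^{\vee}-(6,6)}\rangle$. A module supported on a proper orbit closure has its weights confined to a half-space (as already exploited for $[E]$ in~(\ref{eq:character-E}), where all weights $(a,b)$ satisfy $a,b\leq -6$; and similarly the modules supported on $\ol{O_2}$ have weights bounded via the $\nu_i$), whereas the character of $D_0$ is unbounded in the direction that, after applying $\mc{F}$, produces weights with arbitrarily large positive entries. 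So I would argue directly that $\mc{F}(D_0)$ cannot equal $E$ (compare supports at the origin: $[\mc{F}(D_0)]$ has weights $\ll$ with $\langle[D_0],e^{\ll^{\vee}-(6,6)}\rangle\neq0$, and by the explicit $D_0$-character these exist with $\ll$ far from the $E$-cone), cannot equal $P$, $D_1$, or $D_2$, and cannot equal $D_0$ itself.

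The most efficient concrete argument, and the one I expect the paper to use, is a congruence/parity obstruction coming from~(\ref{eq:wts-D012}) and Lemma~\ref{lem:D0-sl2-inv}. Recall $\langle[D_0],e^{\mu}\rangle\neq0$ forces $\mu_1+\mu_2\equiv0\pmod 3$; hence $\langle[\mc{F}(D_0)],e^{\ll}\rangle\neq0$ forces $(\ll_2+6)+(\ll_1+6)\equiv0$, i.e. $\ll_1+\ll_2\equiv0\pmod 3$. This already shows $\mc{F}(D_0)\neq D_1,D_2$ by~(\ref{eq:wts-D012}). To separate $\mc{F}(D_0)$ from $D_0$, $P$, and $E$, I would exhibit one explicit dominant weight $\ll$ with $\langle[\mc{F}(D_0)],e^{\ll}\rangle\neq0$ but $\langle[D_0],e^{\ll}\rangle=\langle[E],e^{\ll}\rangle=0$ and, using that $P$ is supported on $\ol{O_3}$ so $[P]$ is controlled by $[S_{\Delta}]-[S]-[E]$ (Remark~\ref{rem:P-in-Sdelta}), $\langle[P],e^{\ll}\rangle=0$ as well. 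A good candidate is a weight related to the known data $\langle[D_0],e^{(-1,-5)}\rangle=1$ from Example~\ref{ex:wts-D0}: this gives $\langle[\mc{F}(D_0)],e^{(-1,-5)^{\vee}+(6,6)}\rangle=\langle[\mc{F}(D_0)],e^{(5+6,1+6)}\rangle=\langle[\mc{F}(D_0)],e^{(11,7)}\rangle=1$, and since $(11,7)$ has positive entries it cannot occur in $E$ (weights $\leq(-6,-6)$), cannot occur in the $\ol{O_2}$-modules once one checks $m_{(11,7)}+e_{(11,7)}$-type quantities vanish there for the relevant congruence class, and similarly one checks it does not occur in $P$ by the $[S_\Delta]$-weight computation as in Lemmas~\ref{lem:D0-notin-Sdelta} and~\ref{lem:P-in-Sdelta}.

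The main obstacle is the bookkeeping of which weights actually appear in $[D_0]$, $[P]$, $[E]$: one must either use a single cleverly chosen weight where all the comparisons are transparent, or fall back on the structural fact that $P$, $D_1$, $D_2$, $E$ all have weights confined to a cone $\{\ll_1\leq c\}$ for some constant $c$ (with $c\leq -1$ or so), whereas $[\mc{F}(D_0)]$ visibly contains weights with $\ll_1$ arbitrarily large — this follows because $[D_0]$ contains weights $\mu$ with $-\mu_2$ arbitrarily large (e.g. the $m_\ll$-contributions with $\ll_1$ large), and then $\mc{F}$ sends $\mu\mapsto(-\mu_2-6,-\mu_1-6)$. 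Once this is in place, the only simples left for $\mc{F}(D_0)$ to be are full-support ones, so $\mc{F}(D_0)$ is a simple $\GL$-equivariant $\D$-module with full support, as claimed.
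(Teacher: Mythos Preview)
Your overall strategy matches the paper's: rule out $D_1,D_2$ by the mod-$3$ congruence, and exhibit explicit weight witnesses for the remaining cases. However, there is a concrete computational slip and a couple of gaps.

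First, your Fourier-transform weight computation has a sign error. The map on weights is $\mc{F}(\mu)=\mu^{\vee}-(6,6)$, not $\mu^{\vee}+(6,6)$. Applying this to $\mu=(-1,-5)$ gives $(5,1)-(6,6)=(-1,-5)$, so $(-1,-5)$ is a \emph{fixed point} of $\mc{F}$; your claimed weight $(11,7)$ is not the image of $(-1,-5)$ under $\mc{F}$ or $\mc{F}^{-1}$. The paper exploits exactly this fixed-point property: $\langle[\mc{F}(D_0)],e^{(-1,-5)}\rangle=\langle[D_0],e^{(-1,-5)}\rangle=1$, and then~(\ref{eq:wt-1-5-notSdelta}) gives $\langle[S_\Delta],e^{(-1,-5)}\rangle=0$, so $\mc{F}(D_0)$ is not a composition factor of $S_\Delta$, hence $\mc{F}(D_0)\neq P$.

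Second, your fallback ``cone'' argument does not work for $P$: since $[P]=[S_\Delta]-[S]-[E]$ and $[S_\Delta]$ involves $e^{(6,6)\bb{Z}}$, the weights of $P$ are unbounded above in $\ll_1$. So $P$ must be excluded by a specific weight check, as the paper does.

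Third, you never explicitly separate $\mc{F}(D_0)$ from $D_0$ and $E$. The paper does this with a second weight, $(3,0)$: one has $\langle[\mc{F}(D_0)],e^{(3,0)}\rangle=\langle[D_0],e^{(-6,-9)}\rangle=1$ by~(\ref{eq:wts-D0}), while $\langle[D_0],e^{(3,0)}\rangle=0$ and $\langle[E],e^{(3,0)}\rangle=0$ by~(\ref{eq:character-E}). This two-weight approach (one fixed point of $\mc{F}$ to handle $P$, one other to handle $D_0$ and $E$) is cleaner than trying to find a single weight doing all the work.
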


\begin{proof}
 Since the weight $\ll=(-1,-5)$ satisfies $\ll = \ll^{\vee} - (6,6)$, it follows that
 \[\langle [D_0],e^{(-1,-5)}\rangle \overset{(\ref{eq:wts-Fourier})}{=}  \langle [\mc{F}(D_0)],e^{(-1,-5)}\rangle \overset{(\ref{eq:wts-D0})}{=} 1.\] 
 Equation (\ref{eq:wt-1-5-notSdelta}) shows that $\mc{F}(D_0)$ does not appear as a composition factor of $S_{\Delta}$, hence $\mc{F}(D_0)\neq P$. We have
 \[\langle [\mc{F}(D_0)],e^{(3,0)}\rangle \overset{(\ref{eq:wts-Fourier})}{=} \langle [D_0],e^{(-6,-9)}\rangle \overset{(\ref{eq:wts-D0})}{=} 1,\]
and since $\langle [D_0],e^{(3,0)}\rangle = 0$ by (\ref{eq:wts-D0}), we conclude that $\mc{F}(D_0) \neq D_0$. The fact that $\mc{F}(D_0) \neq D_1,D_2$ follows from (\ref{eq:wts-D012}), and the fact that $\mc{F}(D_0)\neq E$ follows from $\langle [E],e^{(3,0)}\rangle = 0$ which is a consequence of (\ref{eq:character-E}).
\end{proof}

\begin{lemma}\label{lem:FD0-delta}
 The $\D$-module composition factors of $\mc{F}(D_0)_{\Delta}$ are among $D_0, P, \mc{F}(D_0)$. Moreover, both~$\mc{F}(D_0)$ and~$P$ appear as composition factors with multiplicity one.
\end{lemma}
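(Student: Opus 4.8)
The plan is to run the localization exact sequence for $\mc{F}(D_0)$ through the $\SL$-invariant character bookkeeping established above, and then to determine the remaining boundary multiplicity by a local computation along $O_3$. First, since $\mc{F}(D_0)$ is simple with full support (Lemma~\ref{lem:FD0-full}), multiplication by $\Delta$ on it is injective (otherwise its support would lie in $V(\Delta)=\ol{O_3}$), so $\mc{H}^0_{\ol{O_3}}(X,\mc{F}(D_0))=0$ and (\ref{eq:4term-loccoh}), with $U=O_4=X\setminus\ol{O_3}$, yields
\[0\lra \mc{F}(D_0)\lra \mc{F}(D_0)_{\Delta}\lra \mc{H}^1_{\ol{O_3}}(X,\mc{F}(D_0))\lra 0,\]
whose last term is supported on $\ol{O_3}$. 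Restricting to $O_4$ shows that $\mc{F}(D_0)$ is the unique composition factor of $\mc{F}(D_0)_{\Delta}$ with full support and that it occurs once; the remaining composition factors are supported on $\ol{O_3}$, hence lie among $P,D_0,D_1,D_2,E$. To discard $D_1,D_2,E$, I would compute $\SL$-invariants: by Lemma~\ref{lem:localization}, (\ref{eq:wts-Fourier}) and Lemma~\ref{lem:D0-sl2-inv},
\[\langle [\mc{F}(D_0)_{\Delta}],e^{(a,a)}\rangle=\lim_{n\to\infty}\langle [\mc{F}(D_0)],e^{(a+6n,a+6n)}\rangle=\lim_{n\to\infty}\langle [D_0],e^{(-a-6n-6,-a-6n-6)}\rangle=0\]
for every $a\in\bb{Z}$, so $\mc{F}(D_0)_{\Delta}$ has no $\SL$-invariant sections; since passing to $\SL$-invariant characters is exact, no composition factor of $\mc{F}(D_0)_{\Delta}$ has $\SL$-invariant sections. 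But $D_1,D_2$ do by (\ref{eq:Dj-sl2-inv}) and $E$ does by (\ref{eq:character-E}), so none of them appears, and the composition factors of $\mc{F}(D_0)_{\Delta}$ lie among $D_0,P,\mc{F}(D_0)$, with $\mc{F}(D_0)$ occurring once.

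It remains to show $P$ occurs exactly once. I would restrict the displayed sequence to $U'=X\setminus\ol{O_2}$, the smooth locus of $\ol{O_3}$, in which $O_3$ is a smooth divisor cut out by the reduced polynomial $\Delta$. There $\mc{F}(D_0)|_{U'}$ is simple with full support, $\mc{F}(D_0)_{\Delta}|_{U'}$ is its localization along $O_3$, and the cokernel $C=(\mc{F}(D_0)_{\Delta}/\mc{F}(D_0))|_{U'}$ is supported on $O_3$; by the standard description of the localization of a regular holonomic module along a smooth divisor, $C$ corresponds under Kashiwara's equivalence to a local system on $O_3$ whose rank equals the dimension of the space of monodromy invariants of the rank-two local system $L=\mc{F}(D_0)|_{O_4}$ about $O_3$. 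Now the $\GL$-character of $\mc{F}(D_0)$, hence of $L$, is concentrated in weights $\ll$ with $\ll_1+\ll_2\equiv 0\pmod 3$ (combine (\ref{eq:wts-D012}) and (\ref{eq:wts-Fourier})), so the central subgroup $C_3=\{\operatorname{diag}(\xi,\xi):\xi^3=1\}\subset H_4$ acts trivially on $L$; being of rank two, $L$ corresponds to the standard $2$-dimensional representation $\rho$ of the quotient $S_3$ of $H_4$. The loop about $O_3$ maps to a transposition in that $S_3$ — locally $\Delta$ is the square of the difference of the two colliding linear factors, so a full loop about $\{\Delta=0\}$ interchanges them — and $\rho$ of a transposition is an involution with one-dimensional fixed space. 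Hence $C$ is a rank-one $\GL$-equivariant local system on $O_3$, necessarily trivial because $H_3=1$, i.e. a single copy of $P|_{U'}$. As $P$ is the only simple with support $\ol{O_3}$ while $D_0|_{U'}=0$, the multiplicity of $P$ in $\mc{F}(D_0)_{\Delta}$ equals that of $P|_{U'}$ in $\mc{F}(D_0)_{\Delta}|_{U'}$, which is $1$.

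The exact sequence and the $\SL$-invariant reduction are routine given the preceding material; the real content is the local monodromy computation along $O_3$, and in particular the point that the $\ll_1+\ll_2\equiv 0\pmod 3$ property of the character forces $L$ to restrict near $O_3$ to the \emph{standard} $S_3$-representation (so that its monodromy has a $1$-dimensional fixed space, not a $0$- or $2$-dimensional one). As a consistency check: once the quiver of Section~\ref{sec:category-modDx} is available, Lemma~\ref{lem:inj} identifies $\mc{F}(D_0)_{\Delta}$ with the injective envelope $\sI^{q_0}$ of $\mc{F}(D_0)$, whose composition factors are $q_0,p,d_0$, each with multiplicity one.
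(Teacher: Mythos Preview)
Your argument is correct, and the first half (the exact sequence, the multiplicity-one occurrence of $\mc{F}(D_0)$, and the exclusion of $E,D_1,D_2$ via $\SL$-invariants) matches the paper's proof essentially verbatim.

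For the multiplicity of $P$, however, you take a genuinely different route. The paper stays entirely within its character calculus: it checks that $\langle [D_0],e^{(-2,-4)}\rangle=\langle [\mc{F}(D_0)],e^{(-2,-4)}\rangle=0$, and then computes
\[
\langle [\mc{F}(D_0)_{\Delta}],e^{(-2,-4)}\rangle=\lim_{n\to\infty}\bigl(m_{(6n+4,6n+2)}+e_{(6n+4,6n+2)}\bigr)=1
\]
via the explicit formulas~(\ref{eq:mll-ell})--(\ref{eq:allj-general}) and a short generating-function identity; since neither $D_0$ nor $\mc{F}(D_0)$ contains $\S_{(-2,-4)}V$, the factor carrying it must be $P$, with multiplicity one. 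Your approach instead restricts to $U'=X\setminus\ol{O_2}$, identifies the rank-two local system underlying $Q_0=\mc{F}(D_0)$ with the standard $S_3$-representation (using the congruence $\ll_1+\ll_2\equiv 0\pmod 3$ to kill the central $C_3\subset H_4$), and reads off the multiplicity of $P$ as the dimension of the monodromy invariants around the smooth divisor $O_3$, via the local factorization $\Delta\sim(r_1-r_2)^2$ forcing the loop to act as a transposition. This is a cleaner conceptual explanation of \emph{why} $P$ occurs once, and it pins down which irreducible $H_4$-representation corresponds to $Q_0$ --- something the paper leaves implicit. The trade-off is that you import two outside ingredients (the $j_*L/j_{!*}L\simeq L^T$ description along a smooth divisor, and the identification of the $S_3$ factor of $H_4$ with the permutation group of the three linear factors), whereas the paper's computation uses only the character machinery already in place in Sections~\ref{subsec:repthy}--\ref{subsec:Dmods-on-twistedcubic}.
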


\begin{proof}
We start by noting that since $\mc{F}(D_0)$ is simple and has full support, it is torsion free as an $S$-module. In particular $\Delta$ is a non-zerodivisor on $\mc{F}(D_0)$ and we get using Lemma~\ref{lem:localization} that
\[ [\mc{F}(D_0)_{\Delta}] = \lim_{n\to\infty} \left([\mc{F}(D_0)] \cdot e^{(-6n,-6n)}\right).\]
We also note that $\mc{F}(D_0)$ is a submodule of $\mc{F}(D_0)_{\Delta}$ and that the quotient $\mc{F}(D_0)_{\Delta}/\mc{F}(D_0)$ has support contained in $\ol{O_3}$, which means that the composition factors of $\mc{F}(D_0)_{\Delta}/\mc{F}(D_0)$ are among $P,D_0,D_1,D_2,E$, and in particular $\mc{F}(D_0)$ appears with multiplicity one.

Since $D_0$ has no $\SL$-invariant sections by Lemma~\ref{lem:D0-sl2-inv}, the same must be true about $\mc{F}(D_0)$ and thus also about $\mc{F}(D_0)_{\Delta}$. Since $E,D_1,D_2$ have $\SL$-invariant sections by (\ref{eq:character-E}) and (\ref{eq:Dj-sl2-inv}), it follows that they cannot appear as composition factors of $\mc{F}(D_0)_{\Delta}$, so the only remaining potential composition factors are~$P$ and~$D_0$. To conclude it remains to show that $P$ does appear as a composition factor and that it has multiplicity one.

We have
\[ \langle [\mc{F}(D_0)],e^{(-2,-4)} \rangle \overset{(\ref{eq:wts-Fourier})}{=} \langle [D_0],e^{(-2,-4)} \rangle \overset{(\ref{eq:wts-D0})}{=} 0, \]
so $D_0$ and $\mc{F}(D_0)$ do not contain $\S_{(-2,-4)}V$ as a subrepresentation. Moreover, using Lemma~\ref{lem:localization} we get
\[
\begin{aligned}
\langle [\mc{F}(D_0)_{\Delta}],e^{(-2,-4)} \rangle &= \lim_{n\to\infty} \langle [\mc{F}(D_0)],e^{(6n-2,6n-4)} \rangle = \lim_{n\to\infty} \langle [D_0],e^{(-2-6n,-4-6n)} \rangle \\
&=  \lim_{n\to\infty} (m_{(6n+4,6n+2)} + e_{(6n+4,6n+2)})
\end{aligned}
\]
We have $e_{(6n+4,6n+2)} = \langle [S], e^{6n-2,6n-4} \rangle = 1$ for all $n\geq 1$, and $m_{(6n+4,6n+2)} = \nu_{6n-1} - \nu_{6n-4}$ which can be computed from the generating function
\[ \sum_{i\geq 0} (\nu_i - \nu_{i-3})\cdot t^i = \frac{1-t^3}{(1-t^2)(1-t^3)} = \frac{1}{1-t^2} = 1+t^2 + t^4 + t^6 \cdots\]
Taking $i=6n-1$ we get that $\nu_i - \nu_{i-4} = 0$ and therefore $m_{(6n+4,6n+2)} = 0$ for all $n$. We conclude that 
\begin{equation}\label{eq:FD0-delta-2-4}
\langle [\mc{F}(D_0)_{\Delta}],e^{(-2,-4)} \rangle=1,
\end{equation}
so there is a composition factor of $\mc{F}(D_0)_{\Delta}$ containing $\S_{(-2,-4)}V$ as a subrepresentation. Since $D_0$ and $\mc{F}(D_0)$ do not have this property, this composition factor must then be $P$, occurring with multiplicity one as desired.
\end{proof}

\begin{corollary}\label{cor:factors-Sdelta}
 The module $P$ has no $\SL$-invariant sections, and is therefore not isomorphic to $S_{\Delta}/S$. In fact, $S_{\Delta}/S$ has length two as a $\D$-module, with composition factors $P$ and $E$.
\end{corollary}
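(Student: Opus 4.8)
The plan is to deduce everything from the character computations already carried out in Lemma~\ref{lem:FD0-delta} and the structural input of Remark~\ref{rem:P-in-Sdelta}. First I would show that $P$ has no $\SL$-invariant sections. By Lemma~\ref{lem:FD0-delta} the module $P$ occurs as a composition factor of $\mc{F}(D_0)_{\Delta}$, and in the proof of that lemma it was established — using that $D_0$ has no $\SL$-invariant sections (Lemma~\ref{lem:D0-sl2-inv}) — that $\mc{F}(D_0)_{\Delta}$ has no $\SL$-invariant sections either, i.e. $\langle [\mc{F}(D_0)_{\Delta}], e^{(a,a)} \rangle = 0$ for all $a\in\bb{Z}$. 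Since the class of a module in $\Gamma(\GL)$ is the sum of the classes of its composition factors and all multiplicities of genuine (non-virtual) modules are non-negative, it follows that $\langle [P], e^{(a,a)} \rangle = 0$ for all $a$. On the other hand $S_{\Delta}/S$ \emph{does} have $\SL$-invariant sections: $\Delta$ spans the one-dimensional $\GL$-representation $e^{(6,6)}$, so $\Delta^{-1}$ spans the $\SL$-invariant line $e^{(-6,-6)}$, which is not contained in $S$, and its image in $S_{\Delta}/S$ is a nonzero $\SL$-invariant. Hence $S_{\Delta}/S\not\simeq P$, which gives the first assertion.

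For the length statement I would combine Remark~\ref{rem:P-in-Sdelta} with one character evaluation. By that remark $P=\mc{L}(\ol{O_3};X)$ embeds in $S_{\Delta}/S$ and the quotient $(S_{\Delta}/S)/P$ is supported on $O_0$; by Theorem~\ref{thm:simples}(a) (the component group of the stabilizer of $0$ is trivial) the full subcategory of $\opmod_{\GL}(\D_X)$ of modules supported on $O_0$ is semisimple with unique simple object $E$, so $(S_{\Delta}/S)/P\simeq E^{\oplus k}$ for some $k\geq 0$, and consequently $[S_{\Delta}/S]=[P]+k\cdot[E]$ in $\Gamma(\GL)$. It then suffices to show $k=1$, which I would do by evaluating the multiplicity of $e^{(-6,-6)}$: we have $\langle [P], e^{(-6,-6)} \rangle = 0$ (first paragraph), $\langle [E], e^{(-6,-6)} \rangle = \langle [S], e^{(0,0)} \rangle = 1$ by \eqref{eq:wts-Fourier}, and $\langle [S_{\Delta}/S], e^{(-6,-6)} \rangle = \langle [S_{\Delta}], e^{(-6,-6)} \rangle - \langle [S], e^{(-6,-6)} \rangle = 1-0 = 1$; here the value $1$ follows from \eqref{eq:character-Sdelta}, since the only way to write $(-6,-6)=a\cdot(6,3)+b\cdot(3,0)+c\cdot(4,2)+t\cdot(6,6)$ with $0\leq a\leq 1$, $b,c\geq 0$, $t\in\bb{Z}$ forces $3a+3b+2c=0$, hence $a=b=c=0$ and $t=-1$, while $\langle [S], e^{(-6,-6)} \rangle=0$ because $S$ is non-negatively graded. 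Thus $k=1$, so $0\subset P\subset S_{\Delta}/S$ is a composition series and $S_{\Delta}/S$ has length two with factors $P$ and $E$.

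The one point requiring a little care — and the step I would flag as the main obstacle — is transferring the absence of $\SL$-invariants from $\mc{F}(D_0)_{\Delta}$ to its composition factor $P$. This is immediate from additivity in the Grothendieck group once one observes that the multiplicities $\langle [N], e^{(a,a)} \rangle$ of an actual module $N$ are non-negative (equivalently, that restriction to the reductive group $\SL$ is an exact functor on admissible representations); everything else is bookkeeping with the characters established earlier in Section~\ref{sec:simple-Dmods}.
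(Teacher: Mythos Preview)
Your proof is correct and follows essentially the same approach as the paper: deduce the absence of $\SL$-invariants in $P$ from its appearance as a composition factor of $\mc{F}(D_0)_{\Delta}$ (Lemma~\ref{lem:FD0-delta}), observe that $S_{\Delta}/S$ does contain the $\SL$-invariant line $e^{(-6,-6)}$, and then use Remark~\ref{rem:P-in-Sdelta} together with the multiplicity count at $e^{(-6,-6)}$ to see that $(S_{\Delta}/S)/P=E$. Your write-up is in fact slightly more explicit than the paper's in justifying that $(S_{\Delta}/S)/P\simeq E^{\oplus k}$ before computing $k=1$, but the logic is the same.
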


\begin{proof}
 It follows from Lemma~\ref{lem:FD0-delta} that $P$ is a composition factor of $\mc{F}(D_0)_{\Delta}$. Since $\mc{F}(D_0)_{\Delta}$ has no $\SL$-invariant sections, the same must be true about $P$, but
 \[\langle [S_{\Delta}/S], e^{(-6,-6)}\rangle = \langle [S_{\Delta}], e^{(-6,-6)}\rangle - \langle [S], e^{(-6,-6)}\rangle \overset{(\ref{eq:character-S}),(\ref{eq:character-Sdelta})}{=} 1 - 0 = 1,\]
 and therefore $P\neq S_{\Delta}/S$. Using Remark~\ref{rem:P-in-Sdelta} and the fact that $\langle [S_{\Delta}/S], e^{(-6,-6)}\rangle = \langle [E], e^{(-6,-6)}\rangle = 1$ we conclude that $(S_{\Delta}/S)/P = E$.
\end{proof}

As a consequence of Corollary~\ref{cor:factors-Sdelta} and using (\ref{eq:character-S}), (\ref{eq:character-Sdelta}), (\ref{eq:character-E}), we determine the character of $P$ via
\[[P] = [S_{\Delta}] - [S] - [E].\]

\begin{corollary}\label{cor:factors-FD0-delta}
 The $\D$-module $\mc{F}(D_0)_{\Delta}$ has length three, with composition factors $D_0, P, \mc{F}(D_0)$, each appearing with multiplicity one.
\end{corollary}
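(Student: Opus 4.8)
The statement to establish is that $\mc{F}(D_0)_{\Delta}$ has length three, with composition factors $D_0$, $P$, $\mc{F}(D_0)$ each of multiplicity one. By Lemma~\ref{lem:FD0-delta} the only candidates for composition factors are $D_0$, $P$, $\mc{F}(D_0)$, and the latter two each occur exactly once, so in the Grothendieck group $\Gamma(\GL)$ we have
\[[\mc{F}(D_0)_{\Delta}] = [\mc{F}(D_0)] + [P] + k\cdot[D_0]\]
for some integer $k\geq 0$ equal to the multiplicity of $D_0$; the entire content of the corollary is therefore that $k=1$.

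The plan is to evaluate the multiplicity of $\S_{(-1,-5)}V$, i.e. to apply $\langle\,\cdot\,,e^{(-1,-5)}\rangle$, on both sides of this identity. On the right-hand side, using that $(-1,-5)=(-1,-5)^{\vee}-(6,6)$ together with (\ref{eq:wts-Fourier}) and (\ref{eq:wts-D0}) gives $\langle[\mc{F}(D_0)],e^{(-1,-5)}\rangle=\langle[D_0],e^{(-1,-5)}\rangle=1$; and from $[P]=[S_{\Delta}]-[S]-[E]$, combined with (\ref{eq:wt-1-5-notSdelta}), the positivity of the weights appearing in $[S]$, and the fact (\ref{eq:character-E}) that every weight of $E$ has both coordinates at most $-6$, one gets $\langle[P],e^{(-1,-5)}\rangle=0$. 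Hence the right-hand side equals $k+1$.

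For the left-hand side I would run the computation from the proof of Lemma~\ref{lem:FD0-delta} essentially verbatim, with the weight $(-2,-4)$ replaced by $(-1,-5)$: since $\mc{F}(D_0)$ is simple with full support, $\Delta$ is a non-zerodivisor on it, so Lemma~\ref{lem:localization} and (\ref{eq:wts-Fourier}) yield
\[\langle[\mc{F}(D_0)_{\Delta}],e^{(-1,-5)}\rangle=\lim_{n\to\infty}\langle[\mc{F}(D_0)],e^{(6n-1,6n-5)}\rangle=\lim_{n\to\infty}a^0_{(6n+5,6n+1)},\]
and since $(6n+5)+(6n+1)\equiv 0\ (\mbox{mod }3)$ we have $a^0_{(6n+5,6n+1)}=m_{(6n+5,6n+1)}+e_{(6n+5,6n+1)}$. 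Here $e_{(6n+5,6n+1)}=\langle[S],e^{(6n-1,6n-5)}\rangle=0$ for $n\geq 1$ (the weight equation over the generators in (\ref{eq:character-S}) forces $(a,b,c)=(0,0,2)$, and then the second coordinate demands $6t=6n-9$, which has no integer solution), while $m_{(6n+5,6n+1)}=\nu_{6n}-\nu_{6n-5}=2$ for $n\geq 1$, read off the period-$6$ generating function $\sum_{i\geq 0}(\nu_i-\nu_{i-1})t^i=(1-t+t^2)/(1-t^6)$. Thus the left-hand side equals $2$, and comparing with $k+1$ gives $k=1$, so $\mc{F}(D_0)_{\Delta}$ indeed has length three.

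The main obstacle is choosing a weight that isolates the unknown multiplicity $k$ while leaving every other term computable by the tools already available. The weight $(-1,-5)$ works because $D_0$ genuinely contributes to it, whereas $P$ does not — the localization $S_{\Delta}$ does not see $(-1,-5)$ by (\ref{eq:wt-1-5-notSdelta}), $E$ contributes $0$, and $\mc{F}(D_0)$ contributes the expected $1$ — and the one subtle point is that the stable value of $m_{(6n+5,6n+1)}$ is $2$ rather than $1$. It is exactly this leftover unit, after subtracting the contribution of $\mc{F}(D_0)$ itself, that forces $D_0$ to occur, and with multiplicity precisely one.
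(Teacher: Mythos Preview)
Your proof is correct and follows the same strategy as the paper's: reduce to computing the multiplicity $k$ of $D_0$ by testing a single weight on the identity $[\mc{F}(D_0)_{\Delta}]=[\mc{F}(D_0)]+[P]+k[D_0]$. The only difference is the choice of test weight---you use $(-1,-5)$, whereas the paper uses $(-6,-9)$. The paper's choice is marginally cleaner because $\langle[\mc{F}(D_0)],e^{(-6,-9)}\rangle=0$ and $\langle[P],e^{(-6,-9)}\rangle=0$, so the left-hand side directly equals $k$ and one computes $\langle[\mc{F}(D_0)_{\Delta}],e^{(-6,-9)}\rangle=1$; your weight gives the slightly less direct $2=k+1$, but the verification that $m_{(6n+5,6n+1)}=\nu_{6n}-\nu_{6n-5}=2$ (by telescoping five consecutive terms of the period-$6$ sequence $\nu_i-\nu_{i-1}$) is perfectly valid.
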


\begin{proof} We already know that $P$ and $\mc{F}(D_0)$ appear as composition factors with multiplicity one, so we need to show that the same is true for $D_0$. We consider the weight $\ll=(-6,-9)$ and observe using (\ref{eq:character-S}), (\ref{eq:character-Sdelta}) and (\ref{eq:character-E}) that
\[ \langle [S], e^{(-6,-9)} \rangle = 0\quad\mbox{and}\quad\langle [S_{\Delta}], e^{(-6,-9)} \rangle = \langle [E], e^{(-6,-9)} \rangle = 1\]
which based on Corollary~\ref{cor:factors-Sdelta} implies that
\[ \langle [P], e^{(-6,-9)} \rangle = \langle [S_{\Delta}], e^{(-6,-9)} \rangle - \langle [S], e^{(-6,-9)} \rangle - \langle [E], e^{(-6,-9)} \rangle = 0.\]
We have
\[ \langle [D_0],e^{(-6,-9)} \rangle \overset{(\ref{eq:wts-D0})}{=} 1 \mbox{ and } \langle [\mc{F}(D_0)],e^{(-6,-9)} \rangle \overset{(\ref{eq:wts-Fourier})}{=} \langle [D_0],e^{(3,0)} \rangle \overset{(\ref{eq:wts-D0})}{=} 0.\]
Since $P$ and $\mc{F}(D_0)$ do not contain $\S_{(-6,-9)}V$ as a subrepresentation, and $D_0$ contains it with multiplicity one, it follows that the multiplicity of $D_0$ as a composition factor of $\mc{F}(D_0)_{\Delta}$ is equal to 
\[\langle [\mc{F}(D_0)_{\Delta}],e^{(-6,-9)} \rangle = \lim_{n\to\infty} \langle [\mc{F}(D_0)],e^{(6n-6,6n-9)} \rangle = \lim_{n\to\infty} \langle [D_0],e^{(3-6n,-6n)} \rangle =  \lim_{n\to\infty} (m_{(6n,6n-3)} + e_{(6n,6n-3)})\]
We have $e_{(6n,6n-3)} = \langle [S], e^{6n-6,6n-9} \rangle = 1$ for all $n\geq 1$, and $m_{(6n,6n-3)} = \nu_{6n-5} - \nu_{6n-9}$ which can be computed from the generating function
\[ \sum_{i\geq 0} (\nu_i - \nu_{i-4})\cdot t^i = \frac{1-t^4}{(1-t^2)(1-t^3)} = \frac{1+t^2}{1-t^3} = (1+t^2) + (t^3 + t^5) + (t^6 + t^8) + \cdots\]
Taking $i=6n-5$ we get that $\nu_i - \nu_{i-4} = 0$ and therefore $m_{(6n,6n-3)} = 0$ for all $n$. We conclude that $\langle [\mc{F}(D_0)_{\Delta}],e^{(-6,-9)} \rangle = 1$, so that $D_0$ appears with multiplicity one as a composition factor of $\mc{F}(D_0)_{\Delta}$, concluding the proof. 
\end{proof}

\begin{lemma}\label{lem:FD12-full}
 The Fourier transforms $\mc{F}(D_1)$ and $\mc{F}(D_2)$ are simple $\GL$-equivariant $\D$-modules with full support.
\end{lemma}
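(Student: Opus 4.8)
The plan is to leverage the fact, established in Section~\ref{subsec:Dmods}, that the Fourier transform $\mc{F}$ is a self-equivalence of $\opmod_{\GL}(\D_X)$ (see (\ref{eq:def-Fourier})). Since an equivalence of categories sends simple objects to simple objects, $\mc{F}(D_1)$ and $\mc{F}(D_2)$ are automatically simple $\GL$-equivariant $\D$-modules, and the only thing left to prove is that they have full support. By the classification of simple equivariant $\D$-modules in Section~\ref{subsec:classification}, the simple modules that do \emph{not} have full support are precisely $P$ (support $\ol{O_3}$), $D_0,D_1,D_2$ (support $\ol{O_2}$), and $E$ (support $O_0$). Hence it suffices to show that neither $\mc{F}(D_1)$ nor $\mc{F}(D_2)$ is isomorphic to any one of $P,D_0,D_1,D_2,E$.

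The key idea is to separate these modules by the multiplicities of their $\SL$-invariant sections, i.e.\ by the coefficients $\langle[-],e^{(a,a)}\rangle$ for $a\in\bb{Z}$. On the side of the five candidates: $D_0$ has no $\SL$-invariant sections by Lemma~\ref{lem:D0-sl2-inv}, $P$ has none by Corollary~\ref{cor:factors-Sdelta}, and by (\ref{eq:Dj-sl2-inv}) together with (\ref{eq:character-E}) each of $D_1,D_2,E$ only has $\SL$-invariant sections concentrated in weights $(a,a)$ with $a$ strictly negative (indeed $a\le -5$). On the other hand, applying the Fourier formula (\ref{eq:wts-Fourier}) and then reading off (\ref{eq:Dj-sl2-inv}), I would compute
\[\langle [\mc{F}(D_1)],e^{(5,5)}\rangle = \langle [D_1],e^{(5,5)^{\vee}-(6,6)}\rangle = \langle [D_1],e^{(-11,-11)}\rangle = 1, \qquad \langle [\mc{F}(D_2)],e^{(1,1)}\rangle = \langle [D_2],e^{(-7,-7)}\rangle = 1,\]
since $-11\equiv 1\pmod 6$ with $-11\le -5$, and $-7\equiv -1\pmod 6$ with $-7\le -7$. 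Thus $\mc{F}(D_1)$ and $\mc{F}(D_2)$ each carry a non-zero $\SL$-invariant section in a weight $(a,a)$ with $a>0$, which no module among $P,D_0,D_1,D_2,E$ does. Therefore $\mc{F}(D_1)$ and $\mc{F}(D_2)$ must have full support.

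There is no real obstacle here beyond careful bookkeeping; the one point that must be handled correctly is the interplay of the dual-weight convention $\ll^{\vee}=(-\ll_2,-\ll_1)$ with the Fourier shift by $(6,6)$, which is why I single out the verifications $(5,5)^{\vee}-(6,6)=(-11,-11)$ and $(1,1)^{\vee}-(6,6)=(-7,-7)$ and their membership in the supports of $D_1$ and $D_2$ recorded in (\ref{eq:Dj-sl2-inv}). (Should one prefer not to invoke the full list of proper-support simples, an alternative is to mimic Lemma~\ref{lem:FD0-delta}: compute $[\mc{F}(D_j)_{\Delta}]$ via Lemma~\ref{lem:localization} and exhibit a composition factor of full support; but the $\SL$-invariants argument above is shorter and self-contained.)
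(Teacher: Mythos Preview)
Your proof is correct, but it follows a genuinely different route from the paper's. The paper first invokes the mod-$3$ congruence (\ref{eq:wts-D012}) together with (\ref{eq:wts-Fourier}) to observe that every weight $\ll$ appearing in $\mc{F}(D_j)$ has $\ll_1+\ll_2\equiv j\pmod 3$; this immediately rules out $P,D_0,E$ (whose weights have $\ll_1+\ll_2\equiv 0$) and also $D_j$ itself, so the only remaining check is $\mc{F}(D_1)\neq D_2$ (equivalently $\mc{F}(D_2)\neq D_1$, since $\mc{F}$ is an involution). That single check is carried out via the non-diagonal weight $(3,-1)$ using Example~\ref{ex:wts-D12}. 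Your argument instead treats all five non-full-support simples uniformly by looking at $\SL$-invariant sections: you exhibit an $\SL$-invariant of \emph{positive} weight in each $\mc{F}(D_j)$, whereas $P$ and $D_0$ have none at all and $D_1,D_2,E$ only have them in strictly negative weights. The trade-off is that your approach consumes Corollary~\ref{cor:factors-Sdelta} (to know $P$ has no $\SL$-invariants), which the paper's congruence shortcut avoids; on the other hand, your argument dispatches all five candidates in one stroke and does not need the auxiliary computation of Example~\ref{ex:wts-D12}.
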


\begin{proof}
 Just as in the proof of Lemma~\ref{lem:FD0-full}, it suffices to verify that $\mc{F}(D_1)$ and $\mc{F}(D_2)$ are not among the modules $P,D_0,D_1,D_2,E$. It follows from (\ref{eq:wts-Fourier}) and (\ref{eq:wts-D012}) that
 \[ \mbox{if }\langle [\mc{F}(D_j)],e^{\ll} \rangle \neq 0\mbox{ then }\ll_1 + \ll_2 \equiv j \ (\mbox{mod }3),\]
 so we only need to verify that $\mc{F}(D_1)\neq D_2$ and $\mc{F}(D_2)\neq D_1$. Since $\mc{F}$ is an involution, these two assertions are equivalent, so we only verify the first one. We have
 \[ \langle [\mc{F}(D_2)], e^{(3,-1)}\rangle \overset{(\ref{eq:wts-Fourier})}{=} \langle [D_2], e^{(-5,-9)} \rangle \overset{(\ref{eq:wts-D12})}{=} 1\mbox{ and }\langle [D_1], e^{(3,-1)} \rangle \overset{(\ref{eq:wts-D12})}{=} 0\]
 so $\mc{F}(D_2)\neq D_1$, concluding the proof.
\end{proof}

We write $Q_0 = \mc{F}(D_0)$ and let $Q_j = (Q_0)_{\Delta}\cdot \Delta^{j/3}$ for $j=1,2$. We have that $Q_0,Q_1,Q_2$ are $\D$-modules with full support and no $\SL$-invariant sections, and $Q_0$ is simple because the Fourier transform preserves simplicity. Our aim is to show that $Q_1$ and $Q_2$ are also simple, but for now we only show the following.

\begin{lemma}\label{lem:Q12-factors}
 All of the composition factors of $Q_1,Q_2$ have full support, or equivalently, none of the modules $E,P,D_0,D_1,D_2$ appear as composition factors in $Q_1,Q_2$. %Moreover, the composition factors of $Q_1,Q_2$ are different from $G_0,G_2,G_3,G_4,\mc{F}(D_1),\mc{F}(D_2)$.
\end{lemma}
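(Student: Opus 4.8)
The plan is to argue entirely at the level of $\GL$-characters. Every module in $\opmod_{\GL}(\D_X)$ has finite length and is an admissible $\GL$-representation, and a composition series of $Q_1$ (resp.\ $Q_2$) consists of $\GL$-equivariant short exact sequences, each of which splits as a sequence of $\GL$-representations; hence $[Q_j]$ is the sum of the classes of its composition factors in $\Gamma(\GL)$, with all multiplicities non-negative. Consequently a simple module $N$ can occur as a composition factor of $Q_j$ only if $\langle [N],e^{\ll}\rangle\leq\langle [Q_j],e^{\ll}\rangle$ for every dominant weight $\ll$; in particular every weight occurring in $N$ must occur in $Q_j$. So it suffices, for each of $E,P,D_0,D_1,D_2$, to exhibit one dominant weight $\ll$ with $\langle [N],e^{\ll}\rangle>0$ but $\langle [Q_j],e^{\ll}\rangle=0$ for $j=1,2$.

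The first step disposes of $E,D_1,D_2$ using diagonal weights, which detect $\SL$-invariant sections. As recalled just before the statement, $Q_0,Q_1,Q_2$ have no $\SL$-invariant sections, which is equivalent to $\langle [Q_j],e^{(a,a)}\rangle=0$ for all $a\in\bb{Z}$ and $j=0,1,2$ (the restriction of $\S_{(a,a)}V$ to $\SL$ is the trivial representation, while $\S_{\ll}V|_{\SL}$ has no trivial summand when $\ll_1>\ll_2$). On the other hand $\langle [E],e^{(-6,-6)}\rangle=1$ by~(\ref{eq:character-E}), and $\langle [D_1],e^{(-5,-5)}\rangle=\langle [D_2],e^{(-7,-7)}\rangle=1$ by~(\ref{eq:Dj-sl2-inv}). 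Hence none of $E,D_1,D_2$ is a composition factor of $Q_1$ or $Q_2$.

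It remains to exclude $P$ and $D_0$, which have no $\SL$-invariant sections and so escape the previous argument; for these I would use the value of $\ll_1+\ll_2$ modulo $3$. By~(\ref{eq:wts-D012}) every weight occurring in $D_0$ has $\ll_1+\ll_2\equiv 0\pmod 3$, and the same holds for $P$: since $[P]=[S_{\Delta}]-[S]-[E]$, and by~(\ref{eq:character-S}),~(\ref{eq:character-Sdelta}) and~(\ref{eq:character-E}) each of $[S],[S_{\Delta}],[E]$ involves only weights of coordinate sum divisible by $3$ (the generators $(6,3),(3,0),(4,2),(6,6)$ all have coordinate sum divisible by $3$, and hence so do their Fourier transforms $\mc{F}(\mu)=\mu^{\vee}-(6,6)$, whose coordinate sum is congruent to $-(\mu_1+\mu_2)\pmod 3$), so does $[P]$. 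On the other hand $Q_0=\mc{F}(D_0)$ has only weights of coordinate sum divisible by $3$ (again by~(\ref{eq:wts-Fourier}) and~(\ref{eq:wts-D012})), and localizing at $\Delta$ only introduces weights differing from those of $Q_0$ by integer multiples of $(6,6)$, so every weight of $(Q_0)_{\Delta}$ has $\ll_1+\ll_2\equiv 0\pmod 3$; alternatively one may invoke $[(Q_0)_{\Delta}]=[D_0]+[P]+[Q_0]$ from Corollary~\ref{cor:factors-FD0-delta}. Finally, $Q_1=(Q_0)_{\Delta}\cdot\Delta^{1/3}$ and $Q_2=(Q_0)_{\Delta}\cdot\Delta^{2/3}$ are obtained from $(Q_0)_{\Delta}$ by twisting with the one-dimensional $\GL$-representation of weight $(2,2)$, resp.\ $(4,4)$ — although $\det^{1/3}$ is not an algebraic character of $\GL$, the character $\det^{2}$ is an algebraic cube root of $\det^{6}$ and, by connectedness of $\GL$, the $\GL$-action on $\Delta^{1/3}$ is necessarily through it — so $[Q_1]=[(Q_0)_{\Delta}]\cdot e^{(2,2)}$ and $[Q_2]=[(Q_0)_{\Delta}]\cdot e^{(4,4)}$. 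Therefore every weight of $Q_1$ (resp.\ $Q_2$) has $\ll_1+\ll_2\equiv 1\pmod 3$ (resp.\ $\equiv 2\pmod 3$), which is incompatible with the weights of $P$ and $D_0$; hence neither $P$ nor $D_0$ occurs as a composition factor of $Q_1$ or $Q_2$, completing the proof.

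The computations above are routine bookkeeping with the character formulas already recorded; the one point that deserves care is the identity $[Q_j]=[(Q_0)_{\Delta}]\cdot e^{(2j,2j)}$, i.e.\ that multiplication by $\Delta^{1/3}$ is a genuine twist by a one-dimensional algebraic $\GL$-representation and not merely an $S_{\Delta}$-module isomorphism. I also note that neither tool alone is enough: the diagonal-weight ($\SL$-invariant) argument does not see $P$ or $D_0$, and the mod-$3$ congruence leaves $D_2$ open for $Q_1$ and $D_1$ open for $Q_2$; it is the combination that forces all composition factors of $Q_1,Q_2$ to have full support. (With this lemma available, applying the exact restriction functor to the open orbit $O_4$ to a composition series of $Q_j$ then shows $Q_1$ and $Q_2$ are in fact simple, since their restrictions to $O_4$ are simple local systems, but that goes beyond the present statement.)
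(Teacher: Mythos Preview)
Your proof is correct and uses exactly the same two ingredients as the paper's proof: the congruence of $\ll_1+\ll_2$ modulo $3$ to rule out $E,P,D_0$ (and to narrow $D_1,D_2$ to the appropriate $Q_j$), and the absence of $\SL$-invariant sections in $Q_1,Q_2$ to rule out $D_1,D_2$. The only difference is the order: the paper applies the mod-$3$ argument first (eliminating $E,P,D_0$ and leaving only $D_1$ for $Q_2$ and $D_2$ for $Q_1$) and then invokes $\SL$-invariants, whereas you do $\SL$-invariants first (eliminating $E,D_1,D_2$) and then the congruence; your closing remark that neither tool alone suffices makes this symmetry explicit.
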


\begin{proof} Since $[\bb{C}\cdot\Delta^{j/3}] = e^{(2j,2j)}$ it follows that
 \begin{equation}\label{eq:wts-Qj}
  \mbox{if }\langle [Q_j],e^{\ll} \rangle \neq 0\mbox{ then }\ll_1 + \ll_2 \equiv j \ (\mbox{mod }3),
 \end{equation}
so it suffices to check that $D_1$ is not a composition factor of $Q_2$, and that $D_2$ is not a composition factor of~$Q_1$. To see this it suffices to observe that by (\ref{eq:Dj-sl2-inv}) the modules $D_1,D_2$ have non-zero $\SL$-invariant sections, whereas $Q_1$ and $Q_2$ do not. %The final conclusion follows by the same reasoning, since each of $G_0,G_2,G_3,G_4,\mc{F}(D_1),\mc{F}(D_2)$ contains non-zero $\SL$-invariant sections.
\end{proof}

We define as in the introduction $F_i = S_{\Delta}\cdot \Delta^{i/6}$ for $i=-1,0,1,2,3,4$, and let $G_i \subseteq F_i$ denote the $\D$-submodule generated by $\Delta^{i/6}$.

\begin{lemma}\label{lem:G0234}
 We have that $G_i = F_i$ for $i=2,3,4$. Moreover, $G_i$ is a simple $\D$-module for $i=0,2,3,4$.
\end{lemma}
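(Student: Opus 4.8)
My plan is to settle $i=0$ directly and to treat $i\in\{2,3,4\}$ by first reducing the statement to a question about composition factors and then computing characters, with one geometric argument held in reserve for the last case. For $i=0$ there is nothing to do beyond the classical remark that $G_0=S=\bb{C}[x_0,x_1,x_2,x_3]$ is simple over $\D_X$: any nonzero submodule contains a nonzero polynomial, hence, after applying suitable $\pd_j$'s, the constant $1$, hence all of $S$. Now fix $i\in\{2,3,4\}$. Since $F_i$ lies in $\opmod_{\GL}(\D_X)$ it is holonomic, hence of finite length, and its composition factors are among the $14$ simple modules. Writing $j\colon O_4\hookrightarrow X$ for the dense orbit, $\Delta$ is invertible on $O_4$, so $j^*F_i=j^*G_i=\mc{O}_{O_4}\cdot\Delta^{i/6}$ is a rank one equivariant connection, hence a simple object of $\opmod_{\GL}(\D_{O_4})$; moreover $\Delta$ is a non-zerodivisor on $F_i$, so $\mc{H}^0_{\ol{O_3}}(X,F_i)=0$ and both $F_i$ and $G_i$ embed into $j_*j^*F_i$, whose socle is $L_i:=\mc{L}(\ol{O_4},j^*F_i;X)$ by Lemma~\ref{lem:inj}. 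Thus $L_i\subseteq G_i\subseteq F_i$, the quotients $F_i/G_i$ and $F_i/L_i$ are supported on $\ol{O_3}=V(\Delta)$, and $L_i$ is the unique composition factor of $F_i$ with full support, occurring with multiplicity one. Hence it suffices to prove that none of $P,D_0,D_1,D_2,E$ is a composition factor of $F_i$: this forces $F_i=L_i$, and then $G_i=F_i$ is simple.

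To eliminate these candidates I would argue with characters, using $[F_i]=[S_\Delta]\cdot e^{(i,i)}$. Every weight appearing in $[S_\Delta]$ has coordinate sum divisible by $3$ by (\ref{eq:character-Sdelta}), so every weight of $F_i$ has coordinate sum $\equiv 2i\pmod 3$; on the other hand the weights of $P$, $D_0$, $E$ have sum $\equiv 0$, those of $D_1$ have sum $\equiv 2$, and those of $D_2$ have sum $\equiv 1$ modulo $3$, by (\ref{eq:wts-D012}) and (\ref{eq:character-E}) (and the fact that $P,E$ are composition factors of $S_\Delta$). Comparing congruences rules out all five candidates for $i=2$ except $D_2$, and all except $D_1$ for $i=4$. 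To finish these two cases I would note that the diagonal (i.e. $\SL$-invariant) weights appearing in $[S_\Delta]$ are exactly the $(6k,6k)$, hence those of $F_i$ are the $(a,a)$ with $a\equiv i\pmod 6$; but by (\ref{eq:Dj-sl2-inv}) the diagonal weights of $D_1$ (resp. $D_2$) have $a\equiv 1$ (resp. $a\equiv -1$) mod $6$, so $D_1$ is not a composition factor of $F_4$ and $D_2$ is not one of $F_2$. This completes $i=2$ and $i=4$.

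For $i=3$ the congruence and diagonal-weight arguments still exclude $D_1$, $D_2$ and $E$ (the diagonal weights of $E$ have $a\equiv 0\not\equiv 3\pmod 6$), and they exclude $D_0$ via the finer observation that $D_0$ carries a weight $\ll$ with $\ll_1-\ll_2=1$: by (\ref{eq:chars-Dj})--(\ref{eq:allj-general}) one has $\langle[D_0],e^{(-4,-5)}\rangle=a^0_{(5,4)}=m_{(5,4)}=\nu_0-\nu_{-2}=1$, whereas no weight $\ll$ of $F_3=S_\Delta\cdot\Delta^{1/2}$ satisfies $\ll_1-\ll_2=1$, since that difference is unchanged under the shift by $(3,3)$ and for a weight of $S_\Delta$ it equals $3\epsilon+3p+2q$. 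The remaining, and I expect hardest, candidate is $P$. To rule it out I would restrict to the open set $U=X\setminus\ol{O_2}$, on which $\ol{O_3}\cap U=O_3$ is a smooth hypersurface with reduced equation $\Delta$. On $U$ we have $F_3|_U=\mc{O}_U[\Delta^{-1}]\cdot\Delta^{1/2}$, which in local coordinates along $O_3$ (with $t=\Delta$) is $\mc{O}_{\bb{A}^1}[t^{-1}]\cdot t^{1/2}$ tensored with the polynomial ring in the remaining variables; since $\tfrac12\notin\bb{Z}$, the factor $\mc{O}_{\bb{A}^1}[t^{-1}]\cdot t^{1/2}=\bigoplus_{n\in\bb{Z}}\bb{C}\,t^{1/2+n}$ is a simple $\D_{\bb{A}^1}$-module, so $F_3|_U$ has no nonzero section supported on $O_3$ and is itself a simple $\D_U$-module with full support on $U$. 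But $P|_U=\mc{L}(O_3;U)$ is nonzero with support $O_3\subsetneq U$, so it is not isomorphic to $F_3|_U$ and hence is not a composition factor of the simple module $F_3|_U$; since restriction to $U$ is exact, $P$ is not a composition factor of $F_3$. With all five candidates excluded, $F_3=L_3=G_3$ is simple, completing the proof. (Alternatively, the equality $F_i=G_i$ for $i\in\{2,3,4\}$ could be read off from the Bernstein--Sato polynomial of $\Delta$; the argument above avoids that input.)
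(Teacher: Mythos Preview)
Your proof is correct but takes a genuinely different route from the paper. The paper dispatches $i\in\{2,3,4\}$ in two lines by quoting the Bernstein--Sato polynomial $b_\Delta(s)=(s+1)^2(s+5/6)(s+7/6)$ and invoking the argument of \cite[Section~7]{raicu-matrices}: since no integer translate of $i/6$ is a root of $b_\Delta$, one gets $G_i=F_i$ simple directly. You instead (i) pin down $L_i\subseteq G_i\subseteq F_i$ via Lemma~\ref{lem:inj}, (ii) eliminate $D_0,D_1,D_2,E$ (and $P$ for $i=2,4$) as composition factors by weight congruences and $\SL$-invariant weights, and (iii) for $i=3$ exclude $P$ by a monodromy computation along the smooth divisor $O_3\subset U$. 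This buys you a proof that is entirely internal to the paper --- no $b$-function and no external reference --- at the price of more bookkeeping. Your step (iii) is really the standard fact that for a rank one local system on the complement of a smooth divisor with nontrivial monodromy (here $e^{\pi i}=-1$) the minimal, intermediate, and maximal extensions coincide; phrasing it that way makes the passage from the local coordinate calculation to the global simplicity of $F_3|_U$ immediate, whereas your current wording leaves that local-to-global step slightly implicit.
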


\begin{proof}We have that $G_0 = S = \mc{L}(X;X)$ is a simple $\D$-module, so we only need to focus on the cases $i=2,3,4$. Recall from \cite[Example~9.1]{b-functions} that the $b$-function of $\Delta$ is 
\[b_{\Delta}(s) = (s+1)^2\cdot(s+5/6)\cdot(s+7/6)\]
hence no integer translate of $i/6$ is a root of $b_{\Delta}(s)$ for $i=2,3,4$. The argument in \cite[Section~7]{raicu-matrices} with $f=\Delta$ applies to show that $G_i = F_i$ for $i=2,3,4$, and that each such $G_i$ is a simple $\D$-module.
\end{proof}

\begin{corollary}\label{cor:simples-full-support}
 The 9 simple equivariant $\D$-modules on $X = \Sym^3 W$ having full support are
 \[G_0,G_2,G_3,G_4,\mc{F}(D_1),\mc{F}(D_2), Q_0,Q_1,Q_2.\]
Moreover, we have that $G_1=\mc{F}(D_2)$, $G_{-1}=\mc{F}(D_1)$, $D_1 = F_1/G_1$ and $D_2 = F_{-1}/G_{-1}$.
\end{corollary}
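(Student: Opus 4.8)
The plan is to assemble the nine simple full-support modules from the preceding lemmas, check they are pairwise non-isomorphic using the characters already computed, and then locate $F_{\pm1}/G_{\pm1}$ and $G_{\pm1}$ inside that list. Lemma~\ref{lem:G0234} gives the simple full-support modules $G_0=S$, $G_2$, $G_3$, $G_4$; Lemma~\ref{lem:FD0-full} gives $Q_0=\mc{F}(D_0)$; Lemma~\ref{lem:FD12-full} gives $\mc{F}(D_1)$ and $\mc{F}(D_2)$. To see $Q_1,Q_2$ are simple of full support, let $j\colon U=X\setminus V(\Delta)\hra X$. Since $\Delta$ acts invertibly on $Q_j$ we have $j^*Q_j=Q_j$, and $j^*Q_j=(j^*Q_0)\oo_{\mc{O}_U}\mc{O}_U\cdot\Delta^{j/3}$; here $j^*Q_0$ is the restriction of the simple full-support module $Q_0$ to the dense open $U$, hence a simple $\D_U$-module, and tensoring with the rank one connection $\mc{O}_U\cdot\Delta^{j/3}$ keeps it simple. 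On the other hand every composition factor of $Q_j$ has full support by Lemma~\ref{lem:Q12-factors}, so each restricts under the exact functor $j^*$ to a nonzero simple $\D_U$-module; thus the $\D_X$-length of $Q_j$ equals the $\D_U$-length of $j^*Q_j=Q_j$, which is $1$.

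To see the nine are distinct, assign to an equivariant module $M$ the residue of $\ll_1+\ll_2$ modulo $3$ over the $\ll$ with $\langle[M],e^{\ll}\rangle\neq0$; by the character formulas this \emph{type} is well defined here. From $[G_i]=[F_i]=[S_\Delta]\cdot e^{(i,i)}$ and \eqref{eq:character-Sdelta}, the modules $G_0,G_3$ have type $0$, $G_2$ type $1$, $G_4$ type $2$; by \eqref{eq:wts-Fourier} and \eqref{eq:wts-D012}, $Q_0$ has type $0$ and $\mc{F}(D_j)$ has type $j$; and $Q_j$ has type $j$ by \eqref{eq:wts-Qj}. Modules of distinct type cannot be isomorphic, so it remains to separate within each type. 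In type $0$: $S$ and $G_3$ have nonzero $\SL$-invariant sections (the constants, resp.\ $\Delta^{1/2}$) while $Q_0$ has none (by Lemma~\ref{lem:D0-sl2-inv}, since Fourier transform and localization at the $\SL$-invariant $\Delta$ preserve this property), and $\langle[S],e^{(-3,-3)}\rangle=0\neq 1=\langle[G_3],e^{(-3,-3)}\rangle$. In type $1$: using \eqref{eq:character-Sdelta}, \eqref{eq:Dj-sl2-inv}, \eqref{eq:wts-Fourier} one checks $\langle[G_2],e^{(2,2)}\rangle=1$, whereas $\langle[\mc{F}(D_1)],e^{(2,2)}\rangle=0$ and $\langle[Q_1],e^{(2,2)}\rangle=0$ (the last because $Q_1$ has no $\SL$-invariant sections), while $\mc{F}(D_1)$ does have $\SL$-invariant sections; hence $G_2,\mc{F}(D_1),Q_1$ are distinct. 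Type $2$ is identical with $G_4,\mc{F}(D_2),Q_2$. This proves the first assertion.

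For the last part, recall from \cite[Example~9.1]{b-functions} that $b_\Delta(s)=(s+1)^2(s+5/6)(s+7/6)$, so $-5/6=\tfrac16-1$ and $-7/6=-\tfrac16-1$ are simple roots and no other integer translate of $\pm\tfrac16$ is a root. Just as in the proof of Lemma~\ref{lem:G0234}, the argument of \cite[Section~7]{raicu-matrices} with $f=\Delta$ then shows that $F_{\pm1}=S_\Delta\cdot\Delta^{\pm1/6}$ has $\D$-module length $2$, with $G_{\pm1}=\D\cdot\Delta^{\pm1/6}$ and $F_{\pm1}/G_{\pm1}$ both simple. Because $\Delta^{\pm1/6}$ generates a free rank one $S$-submodule of the torsion-free module $F_{\pm1}$, the module $G_{\pm1}$ has full support; hence $0\neq G_{\pm1}|_U\subseteq F_{\pm1}|_U$ with $F_{\pm1}|_U$ a rank one connection (hence simple), so $G_{\pm1}|_U=F_{\pm1}|_U$ and $F_{\pm1}/G_{\pm1}$ is not of full support. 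Now $[F_1]=[S_\Delta]\cdot e^{(1,1)}$ has type $2$ and $[F_{-1}]=[S_\Delta]\cdot e^{(-1,-1)}$ has type $1$, and among the five non-full-support simples $P,D_0,D_1,D_2,E$ the types are $0,0,2,1,0$ (by \eqref{eq:wts-D012} for the $D_j$, by $[P]=[S_\Delta]-[S]-[E]$ for $P$, and by \eqref{eq:character-E} for $E$). So $F_1/G_1=D_1$ and $F_{-1}/G_{-1}=D_2$. Finally $G_1$ is a simple full-support module of type $2$, hence $G_1\in\{G_4,\mc{F}(D_2),Q_2\}$; but $\langle[G_1],e^{(1,1)}\rangle\geq 1>0=\langle[G_4],e^{(1,1)}\rangle$ (since $\langle[S_\Delta],e^{(-3,-3)}\rangle=0$), and $G_1$ contains the $\SL$-invariant section $\Delta^{1/6}$ while $Q_2$ has no $\SL$-invariant sections, so $G_1=\mc{F}(D_2)$; symmetrically $G_{-1}=\mc{F}(D_1)$. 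The step I expect to be the real crux is the $b$-function input identifying the length of $F_{\pm1}$ and the simplicity of $G_{\pm1}$ and $F_{\pm1}/G_{\pm1}$; the rest is bookkeeping with the characters found earlier and the orbit/stabilizer data of Section~\ref{subsec:classification}.
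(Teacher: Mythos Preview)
Your argument that $Q_1,Q_2$ are simple is correct and genuinely different from the paper's. You observe that $j^*Q_j$ is a twist of the simple $j^*Q_0$ by a rank-one connection, hence simple, and then use Lemma~\ref{lem:Q12-factors} to transfer the length count along the exact functor $j^*$. The paper instead argues indirectly: it picks an arbitrary composition factor $Q_j'$ of $Q_j$, shows the nine modules $G_0,G_2,G_3,G_4,\mc{F}(D_1),\mc{F}(D_2),Q_0,Q_1',Q_2'$ are distinct (same type/$\SL$-invariant bookkeeping you use), invokes the count of nine simples from Section~\ref{subsec:classification} to conclude this list is independent of the choice of $Q_j'$, and then checks a single multiplicity to rule out repeated factors. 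Your route is cleaner and avoids the counting trick.

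The gap is in the second half. You claim that because $-5/6$ and $-7/6$ are \emph{simple} roots of $b_\Delta$, ``the argument of \cite[Section~7]{raicu-matrices}'' shows $F_{\pm1}$ has length two with $G_{\pm1}$ and $F_{\pm1}/G_{\pm1}$ simple. But the paper only invokes that reference in Lemma~\ref{lem:G0234} for the case where \emph{no} integer translate is a root, and the standard $b$-function functional equation by itself only tells you that $G_1=\D\Delta^{1/6}\subseteq\D\Delta^{-5/6}=F_1$ with at most one jump in the chain; it does not give simplicity of either piece. Your subsequent rank-one argument correctly shows $F_{\pm1}/G_{\pm1}$ is supported in $\ol{O_3}$ and hence (by type) has only $D_1$ (resp.\ $D_2$) as possible composition factors, but you still need a multiplicity-one check to conclude it \emph{equals} $D_1$, and you have no independent argument that $G_{\pm1}$ itself is simple.

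The paper sidesteps this entirely: it does not try to prove $G_1$ simple directly. Instead it shows $F_1$ is torsion-free, so it contains some simple submodule of full support; by type this is $\mc{F}(D_2)$ or $Q_2$, and $Q_2$ is excluded by $\langle[F_1],e^{(2,0)}\rangle=0$. Since $\langle[F_1],e^{(1,1)}\rangle=1=\langle[\mc{F}(D_2)],e^{(1,1)}\rangle$, the section $\Delta^{1/6}$ lies in this simple submodule, whence $\mc{F}(D_2)=G_1$. The quotient $F_1/G_1$ then has only $D_1$ as a possible factor, and a character check at $e^{(-5,-5)}$ pins the multiplicity to one. You could patch your proof the same way: drop the length-two claim, keep your support argument for $F_{\pm1}/G_{\pm1}$, and add two character computations (one to identify the full-support simple inside $F_{\pm1}$, one for the multiplicity of $D_1$ or $D_2$ in the quotient).
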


\begin{proof}
 We have already seen that $G_0,G_2,G_3,G_4,\mc{F}(D_1),\mc{F}(D_2),Q_0=\mc{F}(D_0)$ are simple. For $j=1,2$ let $Q_j'$ denote any of the composition factors of $Q_j$, which by Lemma~\ref{lem:Q12-factors} has full support. It follows from (\ref{eq:wts-Qj}) that the modules $Q_0,Q_1',Q_2'$ are mutually distinct. Moreover, they are distinct from $G_0,G_2,G_3,G_4,\mc{F}(D_1),\mc{F}(D_2)$ since the latter contain non-zero $\SL$-invariant sections, whereas $Q_0,Q_1',Q_2'$ do not. It follows that
 \begin{equation}\label{eq:simples-full-support}
 G_0,G_2,G_3,G_4,\mc{F}(D_1),\mc{F}(D_2), Q_0,Q_1',Q_2'
 \end{equation}
 are all the 9 simple equivariant $\D$-modules on $X = \Sym^3 W$ having full support. Since for $j=1,2$ the modules $Q_j'$ were chosen arbitrarily among the composition factors of $Q_j$, and since the list (\ref{eq:simples-full-support}) is independent on these choices, we conclude that in fact each $Q_j$ only has one composition factor (possibly with multiplicity bigger than one). We have seen in (\ref{eq:FD0-delta-2-4}) that $\langle [Q_0],e^{(-2,-4)}\rangle = 1$, which implies that $\langle [Q_1],e^{(0,-2)}\rangle = 1$ and $\langle [Q_2],e^{(2,0)}\rangle = 1$, and thus the composition factors of $Q_1$ and $Q_2$ cannot appear with multiplicity bigger than one. This means that $Q_1' = Q_1$ and $Q_2'=Q_2$, proving our first assertion.
 
The $\D$-modules $F_1$ and $F_{-1}$ are torsion free, so they contain a simple submodule with full support. Looking at the congruence of $\ll_1 + \ll_2$ modulo $3$ for the representations $\S_{\ll}V$ contained in $F_1$ and $F_{-1}$, and comparing with the list of simple equivariant $\D$-modules that we obtained, we conclude that
\begin{itemize}
 \item $\mc{F}(D_2)$ and $Q_2$ are the only possible submodules of $F_1$ with full support, and $D_1$ is the only possible composition factor of $F_1$ that does not have full support.
 \item $\mc{F}(D_1)$ and $Q_1$ are the only possible submodules of $F_{-1}$ with full support, and $D_2$ is the only possible composition factor of $F_{-1}$ that does not have full support.
\end{itemize}

Since $F_1 = S_{\Delta}\cdot\Delta^{1/6}$ it follows from (\ref{eq:character-Sdelta}) that
\begin{equation}\label{eq:character-F1}
 [F_1] = \frac{1+e^{(6,3)}}{(1-e^{(3,0)})(1-e^{(4,2)})} \cdot e^{(6,6)\bb{Z}+(1,1)}
\end{equation}
and in particular $\langle [F_1],e^{(a+2,a)} \rangle \neq 0$ if and only if $a \equiv 3\ (\mbox{mod }6)$. This shows that $\langle [F_1],e^{(2,0)} \rangle = 0$, and since $\langle [Q_2],e^{(2,0)}\rangle = 1$ we conclude that $Q_2$ is not a submodule of $F_1$. It follows that $\mc{F}(D_2)$ is a submodule of $F_1$, and in fact $\mc{F}(D_2)$ has multiplicity one as a composition factor of $F_1$, due to the fact that
\[\langle [\mc{F}(D_2)], e^{(1,1)} \rangle \overset{(\ref{eq:wts-Fourier}),(\ref{eq:Dj-sl2-inv})}{=} 1\quad\mbox{ and }\quad \langle [F_1], e^{(1,1)} \rangle = 1.\]
Since $\bb{C}\cdot\Delta^{1/6}$ is the unique copy of $\S_{(1,1)}V$ inside $F_1$, it follows that $\Delta^{1/6}\in \mc{F}(D_2)$. Since $\mc{F}(D_2)$ is simple, it is generated by $\Delta^{1/6}$ as a submodule of $F_1$, hence $\mc{F}(D_2) = G_1$. Now the quotient $F_1/G_1$ can only have $D_1$ as a composition factor, and $D_1$ will appear with multiplicity one since
\[ \langle [F_1], e^{(-5,-5)} \rangle \overset{(\ref{eq:character-F1})}{=} 1 \overset{(\ref{eq:Dj-sl2-inv})}{=}  \langle [D_1], e^{(-5,-5)} \rangle\quad\mbox{and}\quad\langle [G_1], e^{(-5,-5)} \rangle = \langle [\mc{F}(D_2)], e^{(-5,-5)} \rangle \overset{(\ref{eq:wts-Fourier}),(\ref{eq:Dj-sl2-inv})}{=} 0.\]
This shows that $F_1/G_1 = D_1$, as desired. The proof that $G_{-1} = \mc{F}(D_1)$ and that $F_{-1}/G_{-1}$ is completely analogous, and we leave the details to the interested reader.
\end{proof}

\begin{remark}
 Since the modules $G_i$, $i=-1,0,1,2,3,4$, have holonomic rank one (when restricted to the dense orbit $O_4$ they are isomorphic to $F_i$, which are free $S_{\Delta}$-modules of rank one), they correspond via the classification in Section~\ref{subsec:classification} to the $1$-dimensional representations of the component group $H_4\simeq C_3\times S_3$. The remaining simple $\D$-modules with full support, $Q_0,Q_1,Q_2$, correspond to the $2$-dimensional representations of $H_4$, and in particular they have holonomic rank two, which is perhaps not immediately clear from their construction.
\end{remark}

At this point we can describe the characters of all the remaining simple $\D$-modules. Using (\ref{eq:chars-Dj}) and (\ref{eq:wts-Fourier}) we obtain from Corollary~\ref{cor:simples-full-support} explicit formulas for $[G_1] = \mc{F}[D_2]$ and $[G_{-1}] = \mc{F}[D_1]$. Since $[G_i] = [S_{\Delta}] \cdot e^{(i,i)}$ for $i=2,3,4$, they can be computed from (\ref{eq:character-Sdelta}), while the character of $G_0=S$ has been determined in (\ref{eq:character-S}). The character of $Q_0$ is $[Q_0]=\mc{F}[D_0]$, while the characters of $Q_1,Q_2$ are determined by $[Q_i] = [Q_0]\cdot e^{(2i,2i)}$ for $i=1,2$.

We end this section by analyzing some non-simple $\D$-modules. In Section~\ref{sec:category-modDx} we will study from a quiver perspective the indecomposable objects in the category of equivariant $\D$-modules. Two such example is given below.

\begin{lemma}\label{lem:indQ0-del}
The module $D_0$ is not isomorphic to a submodule of $(Q_0)_\Delta / Q_0$, and in particular $(Q_0)_\Delta / Q_0$ is indecomposable.
\end{lemma}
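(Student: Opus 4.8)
The plan is to exploit the character constraints established for $D_0$ together with the composition series of $(Q_0)_\Delta$ from Corollary~\ref{cor:factors-FD0-delta}. Recall that $Q_0 = \mc{F}(D_0)$, and that Corollary~\ref{cor:factors-FD0-delta} says $(Q_0)_\Delta$ has length three, with composition factors $D_0$, $P$, $Q_0$, each of multiplicity one. Since $Q_0$ is simple and torsion-free, the natural map $Q_0 \hookrightarrow (Q_0)_\Delta$ is injective, and the quotient $(Q_0)_\Delta/Q_0$ therefore has exactly two composition factors: $D_0$ and $P$. To prove that $(Q_0)_\Delta/Q_0$ is indecomposable, it suffices to rule out the splitting $(Q_0)_\Delta/Q_0 \simeq D_0 \oplus P$; and by the first assertion of the lemma it suffices to show that $D_0$ is not a submodule of $(Q_0)_\Delta/Q_0$. (If $D_0 \oplus P$ were the decomposition, then in particular $D_0$ would be a submodule, so the two statements are linked as the lemma indicates.)

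So the crux is the first claim. Suppose for contradiction that $D_0$ embeds as a submodule of $N := (Q_0)_\Delta/Q_0$. Then $P = N/D_0$ would be a quotient of $N$. The idea is to find a dominant weight $\ll$ that witnesses a contradiction: a weight whose multiplicity in $D_0$ forces it to ``sit inside'' $Q_0$ already, so that it cannot reappear as a genuinely new submodule generator in the quotient. Concretely, I would track the multiplicities of some well-chosen $\S_{\ll}V$ across the short exact sequences $0 \to Q_0 \to (Q_0)_\Delta \to N \to 0$ and $0 \to D_0 \to N \to P \to 0$, using the character formulas: $[D_0]$ is given by (\ref{eq:chars-Dj}), $[P] = [S_\Delta] - [S] - [E]$ from Corollary~\ref{cor:factors-Sdelta}, $[Q_0] = \mc{F}[D_0]$ from (\ref{eq:wts-Fourier}), and $[(Q_0)_\Delta]$ is computed via Lemma~\ref{lem:localization} as in the proof of Lemma~\ref{lem:FD0-delta}. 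The key point is that for a submodule inclusion $D_0 \hookrightarrow N$, every lowest-weight vector of a copy of $\S_\ll V$ generating $D_0$ must map to an element of $N$ lifting to $(Q_0)_\Delta$; comparing socles (or lowest-degree pieces with respect to the natural grading) should produce the obstruction.

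The cleanest route is probably via a Fourier-transform / duality argument rather than a brute-force weight computation: apply $\mc{F}$ to the putative inclusion $D_0 \hookrightarrow (Q_0)_\Delta/Q_0$. Since $\mc{F}(Q_0) = \mc{F}^2(D_0) = D_0$ (as $\mc{F}$ is an involution) and $\mc{F}$ commutes with localization up to a twist, $\mc{F}((Q_0)_\Delta/Q_0)$ is built from $(D_0)$-related local-cohomology data on the dual side, and the inclusion would transform into a statement about $\mc{F}(D_0) = Q_0$ being a \emph{quotient} of something whose composition factors are $Q_0$ and $P$; but $Q_0$ has full support while the relevant submodule has support in $\ol{O_3}$, giving a contradiction. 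I expect the main obstacle to be making the localization-versus-Fourier compatibility precise enough to pin down $\mc{F}((Q_0)_\Delta/Q_0)$ exactly (one must be careful that $\mc{F}$ does not literally commute with $(-)_\Delta$, only up to the kind of twist appearing in the definitions of $Q_1, Q_2$), so if that proves delicate I would fall back on the explicit weight computation in the previous paragraph, isolating a single weight $\ll$ with $\langle [D_0], e^\ll \rangle \neq 0$ but $\langle [N], e^\ll\rangle = \langle[(Q_0)_\Delta], e^\ll\rangle - \langle [Q_0], e^\ll\rangle = 0$, which would immediately forbid $D_0 \hookrightarrow N$ and finish the proof.
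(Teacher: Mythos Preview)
Your fallback approach (b) has a fatal flaw: since $D_0$ \emph{is} a composition factor of $N = (Q_0)_\Delta/Q_0$, we have $[N] = [D_0] + [P]$ in $\Gamma(\GL)$, and both summands have non-negative coefficients. Hence $\langle [N], e^\ll \rangle \geq \langle [D_0], e^\ll \rangle$ for every dominant $\ll$, so there is no weight with $\langle [D_0], e^\ll \rangle \neq 0$ and $\langle [N], e^\ll \rangle = 0$. Characters alone cannot distinguish whether a given composition factor sits as a submodule or as a quotient; this is exactly the information you are trying to extract.

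Approach (a) is also shaky as written: $\mc{F}$ is a covariant equivalence, so it sends an inclusion $D_0 \hookrightarrow N$ to an inclusion $Q_0 \hookrightarrow \mc{F}(N)$, not to a surjection. And since, as you note, $\mc{F}$ does not literally commute with localization, you have no direct identification of $\mc{F}(N)$ with anything whose submodule structure is already known, so no support contradiction is available.

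The missing ingredient is the $S$-module structure, specifically multiplication by $\Delta$. The paper picks the weight $\ll = (-6,-9)$, which appears in $D_0$, but such that the shifted weight $\ll + (6,6) = (0,-3)$ appears in neither $D_0$ nor $Q_0$ (both are straightforward character checks via (\ref{eq:wts-D0}), (\ref{eq:wts-Fourier}), and the computation $\langle [D_0], e^{(-3,-6)}\rangle = 0$). If $D_0$ were a submodule of $N$, take a highest weight vector $v_\ll \in D_0 \subset N$ and lift it to $\tilde{v}_\ll \in (Q_0)_\Delta$. Then $\Delta \cdot v_\ll$ has weight $(0,-3)$ and hence vanishes in $D_0$, so $\Delta \cdot \tilde{v}_\ll \in Q_0$; but $Q_0$ also has no weight-$(0,-3)$ component, forcing $\Delta \cdot \tilde{v}_\ll = 0$. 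This contradicts the fact that $\Delta$ is a non-zerodivisor on the localization $(Q_0)_\Delta$. The point is that the obstruction lives in the module structure, not in the character.
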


\begin{proof}
Recall from Corollary \ref{cor:factors-FD0-delta} that $(Q_0)_\Delta/Q_0$ has length two with composition factors $P$ and $D_0$. If it were decomposable, it would be isomorphic to $P\oplus D_0$, and in particular $D_0$ would be a submodule of  $(Q_0)_\Delta / Q_0$. It is then enough to prove that this is not the case.

Assume by contradiction that $D_0\subset (Q_0)_\Delta/Q_0$, let $\lambda= (-6,-9)$ and recall that $\langle [D_0],e^{\lambda} \rangle \overset{(\ref{eq:wts-D0})}{=} 1$. Using (\ref{eq:allj-general}) we obtain
\begin{equation}\label{eq:wts-3-6D0}
 \langle [D_0], e^{(-3,-6)}\rangle= 0 \mbox{ and } \langle [D_0], e^{(0,-3)} \rangle = 0.
\end{equation}
 
Let $v_\lambda\in D_0 \subset (Q_0)_\Delta/Q_0$ be a highest weight vector of weight $\lambda$ and choose a lift of $v_\lambda$ to a highest weight vector $\tilde{v}_\lambda \in (Q_0)_\Delta$. Since $\Delta$ has weight $(6,6)$ and $\langle [D_0], e^{\ll+(6,6)} \rangle = \langle [D_0], e^{(0,-3)} \rangle = 0$, we conclude that $\Delta \cdot v_\lambda = 0$ and therefore $\Delta \cdot \tilde{v}_\lambda \in Q_0$. We have however that
\[\langle [Q_0], e^{\ll+(6,6)} \rangle = \langle [Q_0], e^{(0,-3)} \rangle  \overset{(\ref{eq:wts-Fourier})}{=}  \langle [D_0], e^{(-3,-6)} \rangle \overset{(\ref{eq:wts-3-6D0}) }{=} 0\]
which implies that $\Delta \cdot \tilde{v}_{\ll}=0$. This is a contradiction since $\Delta$ is a non-zero divisor on $(Q_0)_\Delta$, concluding our proof.
\end{proof}

\begin{lemma}\label{lem:indS-del}
The module $E$ is not isomorphic to a submodule of $S_\Delta / S$, and in particular $S_\Delta / S$ is indecomposable.
\end{lemma}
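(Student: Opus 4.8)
The plan is to argue by contradiction, following the template of the proof of Lemma~\ref{lem:indQ0-del}. First I would observe that, by Corollary~\ref{cor:factors-Sdelta}, the module $S_\Delta/S$ has $\D$-module length two with composition factors $P$ and $E$; hence if $S_\Delta/S$ were decomposable it would split as $P\oplus E$, and $E$ would in particular appear as a $\D$-submodule. So both assertions of the lemma follow once we show that $E$ is not a $\D$-submodule of $S_\Delta/S$.

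Assume $E\subseteq S_\Delta/S$. The crux is to select a dominant weight $\ll$ with $\langle[E],e^{\ll}\rangle\neq 0$ for which $\ll+(6,6)$ is a weight of neither $E$ nor $S$; I would take $\ll=(-9,-12)$, so that $\ll+(6,6)=(-3,-6)$. Using that $E=\mc{F}(S)$ together with (\ref{eq:wts-Fourier}) and (\ref{eq:character-S}) one gets
\[
\langle[E],e^{(-9,-12)}\rangle=\langle[S],e^{(6,3)}\rangle=1,\qquad \langle[E],e^{(-3,-6)}\rangle=\langle[S],e^{(0,-3)}\rangle=0,
\]
and $\langle[S],e^{(-3,-6)}\rangle=0$ is immediate since $S$ is a polynomial ring whose weights have non-negative coordinates.

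Then I would pick a nonzero highest weight vector $v_{\ll}\in E\subseteq S_\Delta/S$ of weight $\ll$. Because $\Delta$ spans a one-dimensional $\GL$-subrepresentation of $S$ of weight $(6,6)$, which is $\SL$-trivial, multiplication by $\Delta$ is $\SL$-equivariant, so $\Delta\cdot v_{\ll}$ is again a highest weight vector, now of weight $\ll+(6,6)$, and it lies in the $\D$-submodule $E$; since $\langle[E],e^{\ll+(6,6)}\rangle=0$ this forces $\Delta\cdot v_{\ll}=0$ in $S_\Delta/S$. Lifting $v_{\ll}$ to a highest weight vector $\tl{v}_{\ll}\in S_\Delta$ of the same weight, which is possible as the $\GL$-action on $S_\Delta$ is completely reducible, we have $\tl{v}_{\ll}\neq 0$ and $\Delta\cdot\tl{v}_{\ll}\in S$. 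But $\Delta\cdot\tl{v}_{\ll}$ is then a highest weight vector of weight $(-3,-6)$ inside $S$, and $\langle[S],e^{(-3,-6)}\rangle=0$, so $\Delta\cdot\tl{v}_{\ll}=0$; this contradicts the fact that $\Delta$ is a non-zerodivisor on $S_\Delta$ together with $\tl{v}_{\ll}\neq 0$, and completes the proof.

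I do not expect any genuine difficulty here: the only real choice is the weight $\ll$, which must be taken far enough out that $\ll+(6,6)$ has left the cone of weights of $S$ (i.e.\ has acquired a negative coordinate) and also the cone of weights of $E$ (i.e.\ has first coordinate $>-6$), while still keeping $\langle[E],e^{\ll}\rangle\neq 0$. Weights such as $(-9,-12)$ or $(-10,-14)$ do the job, and checking the three multiplicities above from (\ref{eq:character-S}) and (\ref{eq:character-E}) (equivalently, via the Fourier relation (\ref{eq:wts-Fourier})) is routine.
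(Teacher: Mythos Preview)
Your argument is correct and follows exactly the same strategy as the paper's proof, which likewise appeals to Corollary~\ref{cor:factors-Sdelta} and then mimics the proof of Lemma~\ref{lem:indQ0-del}. The only difference is cosmetic: the paper chooses $\ll=(-6,-9)$ (so $\ll+(6,6)=(0,-3)$) instead of your $\ll=(-9,-12)$, but either weight makes the three required multiplicities vanish and the proof goes through verbatim.
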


\begin{proof}
 By Corollary~\ref{cor:factors-Sdelta}, $S_{\Delta}/S$ has length two, with composition factors $P$ and $E$, so we can prove our result using the same strategy as in Lemma~\ref{lem:indQ0-del}. For that it suffices to take again $\ll = (-6,-9)$ and use
 \[ \langle [E], e^{\ll}\rangle = 1,\mbox{ and } \langle [E], e^{\ll+(6,6)}\rangle = \langle [S], e^{\ll+(6,6)}\rangle = 0.\qedhere\]
\end{proof}

\section{The category of equivariant coherent $\D$-modules}\label{sec:category-modDx}

In this section we continue to denote by $W$ a 2-dimensional complex vector space, by $\GL=\GL(W)$ the group of invertible linear transformations of $W$, and by $X=\Sym^3 W$ the space of binary cubic forms which is equipped with a natural action of $\GL$. The goal of this section is to describe the category of $\GL$-equivariant coherent $\D_X$-modules as the category of finite-dimensional representations of a quiver (as mentioned in Section \ref{subsec:Dmods}). Then we identify (up to isomorphism) all the indecomposable representations of the quiver, so implicitly, all the indecomposable equivariant $\D_X$-modules.

\subsection{The quiver description of the category of equivariant $\D$-modules}

We now proceed to determine the quiver $(\Q,\I)$ for the category of $\GL$-equivariant coherent $\D_X$-modules. We will use freely the following general properties of the correspondence between the category $\opmod_{\GL}(\D_X)$ and the quiver $(\Q,\I)$ representing~it:
\begin{itemize}
 \item There is a bijective correspondence between simples $M\in\opmod_{\GL}(\D_X)$ and nodes $m$ of $\Q$ -- using notation (\ref{eq:defS}), the representation of $(\Q,\I)$ corresponding to $M$ is $\sS^m$.
 \item If $D\in\opmod_{\GL}(\D_X)$ corresponds to $\sV^D\in\rep(\Q,\I)$ then for every simple $M\in\opmod_{\GL}(\D_X)$ corresponding to a node $m$ we have that the multiplicity of $M$ as a composition factor of $D$ is equal to $\dim\sV^D_m$.
 \item Specializing the remark above to the case when $D$ is the injective envelope of the simple $M$, we have that $\sV^D = \sI^m$. Using (\ref{eq:defP-I}), it follows that for every simple $N$ with corresponding node $n$, the multiplicity of $N$ as a composition factor of $D$ is equal to the number of paths in $\Q$ (modulo relations) from $n$ to $m$.
 \item Dually, if $D$ is the projective cover of a simple $M$ then the multiplicity of a simple $N$ as a composition factor of $D$ is equal to the number of paths from $m$ to $n$.
 \item If $m,n$ are nodes of $\Q$ corresponding to the simples $M,N$, then the number of arrows from $m$ to $n$ equals $\dim_{\bb{C}} \Ext^1(M,N)$ \cite[Chapter III, Lemma 2.12]{ass-sim-sko}.
 \item The holonomic duality functor $\bb{D}$ (resp. the Fourier transform $\mc{F}$) described in Section~\ref{subsec:Dmods} induces a self-duality (resp. a self-equivalence) of the category $\rep(\Q,\I)$. If $m,n$ are nodes of $\Q$ corresponding to the simples $M,N$, then we write $n=\bb{D}(m)$ (resp. $n=\mc{F}(m)$) when $N=\bb{D}(M)$ (resp. $N = \mc{F}(M)$). The number of arrows from $m$ to $n$ is equal to the number of arrows from $\bb{D}(n)$ to $\bb{D}(m)$, as well as to the number of arrows from $\mc{F}(m)$ to $\mc{F}(n)$. An analogous relationship holds for the number of paths from $m$ to $n$ (considered as always modulo the relations in $\I$).
\end{itemize}

Based on these general remarks we can prove the following.

\begin{lemma}\label{lem:MN'N''}
 Suppose that we have an exact sequence in $\opmod_{\GL}(\D_X)$
 \begin{equation}\label{eq:sesM-IM-N}
 0 \lra M \lra I^M \lra N \lra 0
 \end{equation}
where $I^M$ denotes the injective envelope of the simple module $M$, and $N$ has length two, with composition factors $N',N''$. We assume that $M,N',N''$ are pairwise distinct and let $m,n',n''$ denote the corresponding nodes of $\Q$. If $N''$ is not isomorphic to a submodule of $N$ then there exists a unique arrow in $\Q$ from $n'$ to $m$, and no arrow from $n''$ to $m$.
\end{lemma}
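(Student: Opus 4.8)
The plan is to pass to the equivalent abelian category $\rep(\Q,\I)\simeq\opmod_{\GL}(\D_X)$, which has enough injectives, and to extract the two relevant numbers of arrows by computing $\Ext^1$-groups directly from the exact sequence (\ref{eq:sesM-IM-N}) — exploiting that $I^M$ is injective with a small socle — rather than building extensions by hand. Throughout I work in $\opmod_{\GL}(\D_X)$, identify $M,N',N'',I^M$ with $\sS^m,\sS^{n'},\sS^{n''},\sI^m$ under the equivalence, and use the recalled fact that the number of arrows from a node $a$ to a node $b$ equals $\dim_{\bb{C}}\Ext^1(\sS^a,\sS^b)$.

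The first step is to determine $\operatorname{soc}(N)$. Since $N$ has length two with composition factors the pairwise distinct simples $N'$ and $N''$, every nonzero proper submodule of $N$ is a simple submodule isomorphic to $N'$ or to $N''$; moreover $N$ is semisimple precisely when both $N'$ and $N''$ embed into $N$, in which case $N\simeq N'\oplus N''$ and $N''$ is a submodule of $N$. As $N$ has at least one simple submodule, the hypothesis that $N''$ is not isomorphic to a submodule of $N$ forces $\operatorname{soc}(N)\simeq N'$ (and, incidentally, the extension $0\to N'\to N\to N''\to 0$ to be non-split, though only the socle computation is used below).

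Next I apply $\Hom(N',-)$ to (\ref{eq:sesM-IM-N}). Since $I^M$ is injective, $\Ext^1(N',I^M)=0$, so the long exact sequence yields $\Ext^1(N',M)\simeq\coker\big(\Hom(N',I^M)\to\Hom(N',N)\big)$. Because $I^M$ is the injective envelope of the simple $M$ we have $\operatorname{soc}(I^M)\simeq M$; a nonzero map out of the simple $N'$ would have image a copy of $N'$ in $\operatorname{soc}(I^M)$, and since $N'\not\simeq M$ there is none, so $\Hom(N',I^M)=0$. Hence $\Ext^1(N',M)\simeq\Hom(N',N)\simeq\Hom\big(N',\operatorname{soc}(N)\big)\simeq\Hom(N',N')\simeq\bb{C}$, i.e.\ there is exactly one arrow from $n'$ to $m$. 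Running the identical computation with $N''$ in place of $N'$ gives $\Ext^1(N'',M)\simeq\Hom(N'',N)\simeq\Hom\big(N'',\operatorname{soc}(N)\big)\simeq\Hom(N'',N')=0$, since $N''\not\simeq N'$, so there is no arrow from $n''$ to $m$, which is the assertion.

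The only delicate point is the socle computation of the second step: this is exactly where the hypothesis "$N''$ is not a submodule of $N$" is consumed, and the conclusion genuinely fails without it — if $N\simeq N'\oplus N''$ the same computation would produce an arrow $n''\to m$ as well. Everything else is formal homological algebra: one uses only that $I^M$ is injective, that the injective envelope of a simple object has that simple as its socle, and that a nonzero morphism out of a simple object has simple image lying in the socle of its target; no explicit description of $M$, $N'$, $N''$, or of the quiver $\Q$ is needed.
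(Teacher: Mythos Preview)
Your proof is correct and follows essentially the same route as the paper's: both use that $\Hom(N',I^M)=\Hom(N'',I^M)=0$ since $\operatorname{soc}(I^M)=M$, and then read off $\Ext^1(N',M)\simeq\Hom(N',N)=\bb{C}$ and $\Ext^1(N'',M)\simeq\Hom(N'',N)=0$ from the long exact sequence, the hypothesis on $N''$ being used precisely to identify $\operatorname{soc}(N)=N'$. Your write-up is slightly more explicit about the socle computations, but the argument is the same.
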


\begin{proof} Since the socle of $I^M$ is $M$, we get $\Hom(N',I^M) = \Hom(N'',I^M) = 0$. Moreover, the assumptions on $N$ show that $N'$ must be a submodule of $N$, with $N'' = N/N'$, which in turn implies that $\Hom(N',N) = \bb{C}$. The long exact sequences associated to $\Hom(N',\bullet)$ and $\Hom(N'',\bullet)$ applied to (\ref{eq:sesM-IM-N}) yield
\[0 \lra \Hom(N',N) \lra \Ext^1(N',M) \lra \Ext^1(N',I^M) = 0,\]
\[0 \lra \Hom(N'',N) \lra \Ext^1(N'',M) \lra \Ext^1(N'',I^M) = 0,\]
where the vanishing of $\Ext^1(\bullet,I^M)$ follows from the fact that $I^M$ is injective. We get that $\Ext^1(N',M)=\bb{C}$ and $\Ext^1(N'',M)=0$, i.e. there exists exactly one arrow from $n'$ to $m$, and no arrow from $n''$ to $m$.
\end{proof}

%For any dominant weight $\lambda$, we define the equivariant $\D$-module 
%\[P(\lambda):= \D_X \otimes_{U(\lie)} \S_\lambda.\]
%The $\D$-module $P(\lambda)$ is projective in $\opmod_G(\D_X)$ (see \cite{lor-wal}), and for any $M\in\opmod_G(\D_X)$ we have 
%\begin{equation}\label{eq:proj}
%\dim_{\bb{C}} \Hom_{\D} (P(\lambda), M) = \langle M, e^\lambda  \rangle.
%\end{equation}
%
%\begin{lemma}\label{lem:proj}
%$P(-4,-5)$ is the projective cover of $D_0$ in $\opmod_G(\D_X)$.
%\end{lemma}
%
%\begin{proof}
%By (\ref{eq:chars-Dj}) we have $\langle [D_0], e^{(-4,-5)}  \rangle=1$. Moreover, we see that if $M$ is any of the other 13 simple equivariant $\D$-%modules, then we have $\langle [M], e^{(-4,-5)}  \rangle=0$. This, together with (\ref{eq:proj}), implies the claim.
%\end{proof}

We are now ready to prove the main theorem regarding the quiver description of $\opmod_{\GL}(\D_X)$.

\begin{theorem}\label{thm:quiver}
There is an equivalence of categories 
\[\opmod_{\GL}(\D_X) \simeq \rep(\Q,\I),\]
where $\rep(\Q,\I)$ is the category of finite-dimensional representations of a quiver $\Q$ with relations $\I$, described as follows. The vertices and arrows of the quiver $\Q$ are depicted in the diagram
\[\xymatrix@=2.3pc@L=0.2pc{
s \ar@<0.5ex>[dr]^{\alpha_1} & & d_0 \ar@<0.5ex>[dl]^{\alpha_2}  & & & g_{1} \ar@<0.5ex>[rr]^{\gamma_1} & & d_1 \ar@<0.5ex>[ll]^{\delta_1} & \\
 & p \ar@<0.5ex>[ul]^{\beta_1} \ar@<0.5ex>[ur]^{\beta_2}\ar@<0.5ex>[dl]^{\beta_4} \ar@<0.5ex>[dr]^{\beta_3} & & & \overset{q_1}{\bullet} & \overset{q_2}{\bullet} & \overset{g_2}{\bullet} & \overset{g_3}{\bullet} & \overset{g_4}{\bullet} \\
q_0 \ar@<0.5ex>[ur]^{\alpha_4} & & e \ar@<0.5ex>[ul]^{\alpha_3} & & & g_{-1} \ar@<0.5ex>[rr]^{\gamma_{-1}} & & d_2 \ar@<0.5ex>[ll]^{\delta_{-1}} &
}\]
and the set of relations $\I$ is given by all 2-cycles and all non-diagonal compositions of two arrows:
\[\alpha_i \beta_i \mbox{ and } \beta_i\alpha_i \mbox{ for } i=1,2,3,4,\quad \gamma_i\delta_i\mbox{ and }\delta_i\gamma_i\mbox{ for }i=1,-1,\mbox{ and}\]
\[\alpha_1 \beta_2, \alpha_1\beta_4 , \alpha_2\beta_1, \alpha_2\beta_3, \alpha_3\beta_2,\alpha_3\beta_4,\alpha_4\beta_1,\alpha_4\beta_3.\]
\end{theorem}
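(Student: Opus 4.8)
The plan is to read the quiver $\Q$ off the injective envelopes of the $14$ simple modules and to read the relations $\I$ off their composition series, using the symmetries $\mc{F}$ and $\bb{D}$ to cut down the casework. The nodes of $\Q$ are the simples classified in Section~\ref{sec:simple-Dmods}, labelled by the lower-case versions of the module names. For a simple $M$ with full support, Lemma~\ref{lem:inj} applied with $O=O_4$ (so $Z=\ol{O_3}=V(\Delta)$ and $U=X\setminus V(\Delta)$) identifies the injective envelope of $M$ in $\opmod_{\GL}(\D_X)$ with $j_*j^*M=M_\Delta$. Feeding in Section~\ref{sec:simple-Dmods}: $S_\Delta$ has composition factors $S,P,E$ and socle series $S\mid P\mid E$ (Corollary~\ref{cor:factors-Sdelta} and Lemma~\ref{lem:indS-del}); $(Q_0)_\Delta$ has factors $Q_0,P,D_0$ and socle series $Q_0\mid P\mid D_0$ (Corollary~\ref{cor:factors-FD0-delta} and Lemma~\ref{lem:indQ0-del}); $F_1$ has factors $G_1,D_1$ with socle $G_1$, and $F_{-1}$ has factors $G_{-1},D_2$ with socle $G_{-1}$ (Corollary~\ref{cor:simples-full-support}); and $G_2,G_3,G_4,Q_1,Q_2$ are already $\Delta$-divisible and $\Delta$-torsion free, hence equal to their own injective envelopes. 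Since $\mc{F}$ is a self-equivalence carrying injective envelopes to injective envelopes and permuting simples as recorded in the Theorem on Simple $\D$-modules, applying it yields the injective envelopes of $E,D_0,D_1,D_2$ as $\mc{F}(S_\Delta),\mc{F}((Q_0)_\Delta),\mc{F}(F_{-1}),\mc{F}(F_1)$ with the correspondingly permuted socle series. This pins down every injective envelope except $I^P$.

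Next I determine the arrows. For $M\ne P$ the number of arrows into the node $m$ from a node $n$ equals $\dim\Ext^1(N,M)=$ the multiplicity of $N$ in the socle of $I^M/M$, which the previous step makes explicit (this is also the content of Lemma~\ref{lem:MN'N''} applied to $0\to S\to S_\Delta\to S_\Delta/S\to 0$, to its $\mc{F}$-translate, and to $0\to Q_0\to (Q_0)_\Delta\to (Q_0)_\Delta/Q_0\to 0$): the only arrows into $s,d_0,e,q_0$ come from $p$, one each; the only arrows into $g_1,d_1$ (resp.\ $g_{-1},d_2$) form the $2$-cycle between them; and there are no arrows into $q_1,q_2,g_2,g_3,g_4$. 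The arrows into $p$, and a second reading of all arrows \emph{out of} every node, then follow from the duality $\bb{D}$ via "number of arrows $m\to n$ equals number of arrows $\bb{D}(n)\to\bb{D}(m)$" and the known action of $\bb{D}$ on simples; this produces exactly the twelve displayed arrows $\alpha_1,\dots,\alpha_4,\beta_1,\dots,\beta_4,\gamma_{\pm1},\delta_{\pm1}$. The one $\Ext$-group not covered by this scheme is $\Ext^1(P,P)$, i.e.\ a possible loop at $p$; I would rule it out separately, e.g.\ from the fact that $O_3\simeq\GL/\bb{C}^{\times}$ has connected reductive stabilizer, so $\Ext^1$ of its trivial local system with itself vanishes.

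The relations are read off from the same data. Since $\opmod_{\GL}(\D_X)\simeq\rep(\Q,\I')$ for some relation ideal and $\dim\sI^m_n$ counts paths $n\to m$ modulo $\I'$, each known $I^M$ imposes relations: $D_0,Q_0$ not being composition factors of $S_\Delta$ forces $\alpha_2\beta_1=\alpha_4\beta_1=0$, the multiplicity of $S$ in $S_\Delta$ being $1$ forces $\alpha_1\beta_1=0$, and the socle series $S\mid P\mid E$ forces $\alpha_3\beta_1\neq 0$; applying $\mc{F}$ (and using $I^E=\mc{F}(S_\Delta)$, $I^{D_0}=\mc{F}((Q_0)_\Delta)$) gives the symmetric statements, yielding all eight non-diagonal relations $\alpha_i\beta_j$ and all four two-cycles $\alpha_i\beta_i$, with the four "diagonal" compositions $\alpha_1\beta_3,\alpha_3\beta_1,\alpha_2\beta_4,\alpha_4\beta_2$ nonzero. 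Likewise $F_1$ (resp.\ $F_{-1}$) having factors only $G_1,D_1$ (resp.\ $G_{-1},D_2$), with their $\mc{F}$-images, forces $\gamma_i\delta_i=\delta_i\gamma_i=0$ for $i=1,-1$. Hence $\langle\I\rangle\subseteq\langle\I'\rangle$. For equality one checks that $\bb{C}\Q/\langle\I\rangle$ is finite-dimensional and that the surjection $\bb{C}\Q/\langle\I\rangle\twoheadrightarrow\mc{A}=\bb{C}\Q/\langle\I'\rangle$ is an isomorphism, which reduces to $I^P$ having length $5$, equivalently to $e_p\mc{A}e_p=\bb{C}$, equivalently to $\beta_i\alpha_i\in\langle\I'\rangle$ for all $i$.

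The hard part is exactly this last point together with $\Ext^1(P,P)=0$: these are the only inputs not delivered formally by Lemma~\ref{lem:inj}, the localization computations of Section~\ref{sec:simple-Dmods}, and the $\mc{F}/\bb{D}$-symmetries. Concretely, $\beta_i\alpha_i\in\langle\I'\rangle$ is equivalent to the non-vanishing and linear independence in $\Ext^2(P,P)$ of the four Yoneda products $[\alpha_i]\cup[\beta_i]$, equivalently to the statement that no indecomposable equivariant $\D$-module has composition series $P,N,P$ for $N\in\{S,D_0,E,Q_0\}$ with $P$ as both socle and top. I expect to settle this either by constructing $I^P$ directly as the amalgamated sum over $P$ of the four length-two modules $S_\Delta/S$, $\mc{F}(S_\Delta)/E$, $(Q_0)_\Delta/Q_0$, $\mc{F}((Q_0)_\Delta)/D_0$ and verifying it is injective of length $5$, or by a local computation of $\Ext^2(P,P)$ using the structure of $\ol{O_3}$ along its singular locus $\ol{O_2}$. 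Once $\I'=\I$ is confirmed, the equivalence $\opmod_{\GL}(\D_X)\simeq\rep(\Q,\I)$ follows.
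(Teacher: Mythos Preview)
Your plan is the paper's plan: use Lemma~\ref{lem:inj} to identify $I^M=M_\Delta$ for each simple $M$ of full support, transport to $E,D_0,D_1,D_2$ via $\mc{F}$, read off arrows from the socle of $I^M/M$ (this is exactly how the paper uses Lemma~\ref{lem:MN'N''}), and then determine relations from the composition multiplicities in the $I^M$. The specific inputs you cite from Section~\ref{sec:simple-Dmods}---the socle series of $S_\Delta$, $(Q_0)_\Delta$, $F_{\pm1}$, and the equality $M_\Delta=M$ for $M\in\{G_2,G_3,G_4,Q_1,Q_2\}$---are precisely what the paper uses.

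Where you differ is in how much weight you place on the two residual points. The paper disposes of $\beta_i\alpha_i=0$ in one line: from $\alpha_1\beta_1=\alpha_4\beta_4=0$ (multiplicity of $S$, resp.\ $Q_0$, in its own injective envelope is one) it writes ``Applying the duality $\bb{D}$ we conclude that $\beta_1\alpha_1=\beta_4\alpha_4=0$'', then propagates to $i=2,3$ by $\mc{F}$. You are right to pause here, but your proposed alternatives (Yoneda products in $\Ext^2(P,P)$, or building $I^P$ by amalgamation) are heavier than what the paper records; at minimum you should compare with the paper's duality step before committing to the longer route.

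On the loop at $p$: you are correct that this is not explicitly argued in the paper's proof---it is absorbed into the sentence ``we have obtained a complete list of arrows in~$\Q$''. However, your proposed fix is not quite right: that the stabilizer of $O_3$ is connected only shows $\Ext^1$ vanishes in the semisimple category $\opmod_{\GL}(\D_{O_3})$, not in $\opmod_{\GL}(\D_X)$. The clean argument is the one the paper sets up in the remark immediately after the theorem: apply Lemma~\ref{lem:inj} with $O=O_3$ to identify the injective envelope of $P$ inside the Serre subcategory $\opmod_{\GL}^{\ol{O_3}}(\D_X)$ with $H^1_{O_3}(X,\mc{O}_X)$, and compute its composition factors via the exact sequence $0\to S_\Delta/S\to H^1_{O_3}(X,\mc{O}_X)\to H^2_{\ol{O_2}}(S)\to 0$; since $P$ appears with multiplicity one there and any self-extension of $P$ already lives in that subcategory, $\Ext^1(P,P)=0$.
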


\begin{proof}
Let $M$ be any of the simples $G_2,G_3,G_4,Q_1,Q_2$, and consider the open embedding $j:O_4 \to X$. We have by construction that $j_*j^*M =M_{\Delta}= M$, hence $M$ is injective by Lemma \ref{lem:inj}. The holonomic duality functor $\bb{D}$ takes a simple $\D$-module corresponding to a local system on $U$ to the simple corresponding to the dual local system and therefore
\[G_2 \overset{\bb{D}}{\llra}G_4,\ \bb{D}(G_3)=G_3,\ Q_1 \overset{\bb{D}}{\llra}Q_2.\]
Since $\bb{D}$ takes injectives to projectives, this shows that $M$ is both injective and projective, hence $\Ext^1(M,N) = \Ext^1(N,M)=0$ for all $N\in\opmod_{\GL}(\D_X)$. We conclude that $g_2,g_3,g_4,q_1,q_2$ are isolated nodes of the quiver~$\Q$.

Consider next the modules $G_i$ with $i=\pm 1$. We have by Lemma~\ref{lem:inj} that $j_*j^* G_{i} = F_{i}$ is the injective envelope of $G_i$. Combining this with Corollary \ref{cor:simples-full-support} we get non-split exact sequences
\[0 \to G_{1} \to F_{1} \to D_1\to 0 \mbox{ and } 0 \to G_{-1} \to F_{-1} \to D_2\to 0\] 
so $\Ext^1(D_2,G_{-1}) = \Ext^1(D_1,G_{1}) = \bb{C}$. It follows that in $\Q$ there exist unique arrows $\delta_1$ from $d_1$ to $g_1$, and $\delta_{-1}$ from $d_2$ to $g_{-1}$. Moreover, since $F_i$ is the injective envelope of $G_i$ there are no other paths of positive length (and in particular no arrows) with target $g_1$ or $g_{-1}$. Using
\[D_1 \overset{\bb{D}}{\llra}D_2,\ G_1 \overset{\bb{D}}{\llra}G_{-1},\]
we get unique arrows $\gamma_1$ from $g_1=\bb{D}(g_{-1})$ to $d_1=\bb{D}(d_2)$, and $\gamma_{-1}$ from $g_{-1}=\bb{D}(g_1)$ to $d_2=\bb{D}(d_1)$, and moreover there are no other paths with source in $g_1$ or $g_{-1}$. Applying next the Fourier transform
\[G_1 \overset{\mc{F}}{\llra}D_2,\ G_{-1} \overset{\mc{F}}{\llra}D_1,\]
we conclude that there are unique arrows in and out of $d_1,d_2$ (namely $\gamma_{\pm 1},\delta_{\pm 1}$) and no other paths of positive length in or out of $d_1,d_2$. This forces the relations $\delta_i\gamma_i=\gamma_i\delta_i=0$ for $i=\pm 1$.

We are left to consider the restriction of the quiver to the nodes $e,d_0,p,q_0,s$. Since the modules $E,D_0,P,Q_0,S$ are preserved by holonomic duality, it follows that for $m,n\in\{e,d_0,p,q_0,s\}$, the number of arrows from $m$ to $n$ is equal to the number of arrows from $n$ to $m$.

Applying Lemma~\ref{lem:inj} with $Y=X$, $O=O_3$ and $M=S$ we find that $S_{\Delta}$ is the injective envelope of $S$, and combining Lemmas~\ref{lem:MN'N''} and~\ref{lem:indS-del} we conclude that there exists a unique arrow from $p$ to $s$, which we denote by $\b_1$, and no arrow from $e$ to $s$. Applying the duality $\bb{D}$ we find a unique arrow $\a_1$ from $s$ to $p$. Since $D_0$ and $Q_0$ do not appear as composition factors of $S_{\Delta}$, there is no path (and thus no arrow) from either $d_0$ or $q_0$ to $s$. We can next argue in the similar fashion by taking $M=Q_0$ and appealing to Lemma~\ref{lem:indQ0-del} to conclude that there are unique arrows $\b_4$ from $p$ to $q_0$ and $\a_4$ from $q_0$ to $p$, there is no arrow from $d_0$ to $q_0$, and no path from either $e$ or $s$ to $q_0$.

Since the Fourier transform fixes $P$ and swaps the elements in each pair $(E,S)$ and $(Q_0,D_0)$, it follows from the conclusions of the preceding paragraph that there exists a pair of arrows $\a_3,\b_3$ between $p$ and $e$, and another pair of arrows $\a_2,\b_2$ between $p$ and $d_0$. Moreover, there are no paths from either $d_0$ or $q_0$ to $e$, and no paths from either $e$ or $s$ to $d_0$, so in particular we have obtained a complete list of arrows in $\Q$. 

The relations $\a_1\b_1=\a_4\b_4=0$ follow from the fact that $S$ and $Q_0$ appear with multiplicity one inside their respective injective envelopes. Applying the duality $\bb{D}$ we conclude that $\b_1\a_1=\b_4\a_4=0$, and further applying the Fourier transform we get $\a_2\b_2=\b_2\a_2=\a_3\b_3=\b_3\a_3=0$. Since there are no paths between either of $e,s$ and either $q_0,d_0$ we conclude that 
\[\alpha_1 \beta_2 = \alpha_1\beta_4 = \alpha_2\beta_1 =\alpha_2\beta_3 = \alpha_3\beta_2 = \alpha_3\beta_4 = \alpha_4\beta_1 = \alpha_4\beta_3 = 0.\]
Since $E$ appears in the injective envelope of $S$, there has to be a path from $e$ to $s$ which is necessarily $\a_3\b_1$. By holonomic duality, there is a path from $s$ to $e$, namely $\a_1\b_3$. Similarly, since $D_0$ appears in the injective envelope of $Q_0$ we get the path $\a_2\b_4$ from $d_0$ to $q_0$, and by duality we have the path $\a_4\b_2$ from $q_0$ to $d_0$. This shows that there are no more relations in the quiver $\Q$, concluding our proof.
\end{proof}

\begin{remark}
As any $\D$-module in $\opmod_{\GL}(\D_X)$ has a projective (resp. injective) resolution, the indecomposable projective (resp. injective) $\D$-modules play a special role. We can construct all the indecomposable projective (resp. injective) $\D$-modules in $\opmod_{\GL}(\D_X)$ using $\D$-module-theoretic tools as follows. Up to duality $\bb{D}$, it is enough to describe the injectives. We already obtained in the proof of Theorem \ref{thm:quiver} the injective envelopes for the simples with full support. Namely, if $M$ is a simple equivariant $\D$-module with full support, then its injective envelope is $M_\Delta$ (see Lemma \ref{lem:inj}). By taking Fourier transform (which preserves injectives), the injective envelopes of $E,D_0,D_1,D_2$ are $\mc{F}(S_\Delta), \mc{F}((Q_0)_\Delta),\mc{F}(F_{-1}),\mc{F}(F_1)$, respectively. We are left to identify the injective envelope of $P$. One possible description is as follows.  Let $U=X\backslash \ol{O_2}$ and $j: U \to X$ the open embedding. Since $O_3$ is smooth and closed in $U$ of codimension 1, the sheaf $\mc{H}^1_{O_3} (\mc{O}_{U})$ is the simple $\D_{U}$-module $j^*P$, and $\mc{H}^i_{O_3}(\mc{O}_U)=0$ for $i\neq 1$. We have $j_{*} \mc{H}^1_{O_3} (\mc{O}_{U})= H^0 \mc{H}^1_{O_3} (\mc{O}_{U}) = H^1_{O_3}(X,\mc{O}_X)$, since the spectral sequence $H^p(\mc{H}^q_{O_3}(U,\mc{O}_U))\Rightarrow H^{p+q}_{O_3}(X,\mc{O}_X)$ degenerates. By Lemma \ref{lem:inj}, this shows that  $H=H^1_{O_3}(X,\mc{O}_X)$ is the injective envelope of $P$ in the subcategory $\opmod_G^{\ol{O_3}}(\D_X)$. By Theorem \ref{thm:quiver}, the $\D$-module $H$ and its Fourier transform $\mc{F}(H)$ will correspond to the following representations of the quiver $(\Q,\I)$:
\[H: \begin{aligned}\xymatrix@=1.3pc@L=0.1pc{
0 \ar@<0.5ex>[dr] & & \bb{C} \ar@<0.5ex>[dl]^{1} \\
 & \bb{C} \ar@<0.5ex>[ul] \ar@<0.5ex>[ur]^{0}\ar@<0.5ex>[dl] \ar@<0.5ex>[dr]^{0} & \\
0 \ar@<0.5ex>[ur] & & \bb{C}\ar@<0.5ex>[ul]^{1}
}\end{aligned} \hspace{0.6in} 
\mc{F}(H): \begin{aligned}\xymatrix@=1.5pc@L=0.1pc{
\bb{C} \ar@<0.5ex>[dr]^{1} & & 0 \ar@<0.5ex>[dl] \\
 & \bb{C} \ar@<0.5ex>[ul]^{0} \ar@<0.5ex>[ur]\ar@<0.5ex>[dl]^{0} \ar@<0.5ex>[dr] & \\
\bb{C} \ar@<0.5ex>[ur]^{1} & & 0\ar@<0.5ex>[ul]
}\end{aligned}\]
Note that $P$ is a submodule of both $H$ and $\mc{F}(H)$. Due to the description in (\ref{eq:defP-I}), we obtain the injective envelope $I^P$ of $P$ by the exact sequence
\[0\lra P \lra H \oplus \mc{F}(H) \lra I^P \lra 0,\]
where the first map is the diagonal inclusion.
\end{remark}

\subsection{Description of the indecomposables}\label{subsec:indecomp}

In this section we describe the indecomposable representations of the quiver $\rep(\Q,\I)$ obtained in Theorem \ref{thm:quiver}. We consider each connected component of $\Q$. Clearly, the only indecomposable representations of the isolated vertices are the simples. 

Next, consider the quiver $\xymatrix{1 \ar@<0.5ex>[r] & 2 \ar@<0.5ex>[l]}$ with all 2-cycles zero. There are only 4 indecomposable representations (see also \cite{lor-wal}): the 2 simples  $\xymatrix{\bb{C} \ar@<0.5ex>[r] & 0 \ar@<0.5ex>[l]}$ and $\xymatrix{0 \ar@<0.5ex>[r] & \bb{C} \ar@<0.5ex>[l]}$, together with their respective projective covers $\xymatrix{\bb{C} \ar@<0.5ex>[r]^1 & \bb{C} \ar@<0.5ex>[l]^0}$ and  $\xymatrix{\bb{C} \ar@<0.5ex>[r]^0 & \bb{C} \ar@<0.5ex>[l]^1}$.

Denote by $(\tilde{\Q},\tilde{\I})$ the largest connected component of $(\Q,\I)$, that is, the quiver
\[\hspace{-0.2in}(\tilde{\Q},\tilde{\I}): \hspace{0.1in} \begin{aligned}\xymatrix@=2.3pc@L=0.1pc{
1 \ar@<0.5ex>[dr]^{\alpha_1} & & 2 \ar@<0.5ex>[dl]^{\alpha_2} \\
 & 5 \ar@<0.5ex>[ul]^{\beta_1} \ar@<0.5ex>[ur]^{\beta_2}\ar@<0.5ex>[dl]^{\beta_4} \ar@<0.5ex>[dr]^{\beta_3} & \\
4 \ar@<0.5ex>[ur]^{\alpha_4} & & 3\ar@<0.5ex>[ul]^{\alpha_3}
}\end{aligned}\]
with relations $\tilde{\I}$ as in Theorem \ref{thm:quiver}. In contrast with the quivers obtained in \cite{lor-wal}, the quiver  $(\tilde{\Q},\tilde{\I})$ is not representation-finite, that is, the category $\rep(\tilde{\Q},\tilde{\I})$ has infinitely many indecomposable representations.  This follows by realizing the extended Dynkin quiver $\hat{D}_4$ as a subquiver $\Q(\alpha)$ (resp. $\Q(\beta)$) of $(\tilde{\Q},\tilde{\I})$ by considering only the arrows $\alpha_i$ (resp. the arrows $\beta_i$), where $i=1,2,3,4$. Nevertheless, we are able to describe the indecomposables due to the fact that $(\tilde{\Q},\tilde{\I})$ is of tame representation type (Theorem \ref{thm:tame}).

More precisely, consider the quiver $\hat{D}_4$ as in Example \ref{ex:dee4hat}. We construct two embeddings of categories
\[ \alpha: \rep(\hat{D}_4) \to \rep(\tilde{\Q},\tilde{\I}) \, \mbox{ and } \, \beta : \rep(\hat{D}_4) \to \rep(\tilde{\Q},\tilde{\I})\]
as follows. For a representation $V\in  \rep(\hat{D}_4)$, set $\alpha(V),\beta(V)$ to be the representations
\[\alpha(V): \hspace{0.1in} \begin{aligned}\xymatrix@=2pc@L=0.1pc{
V_1 \ar@<0.5ex>[dr]^{V_{\alpha_1}} & & V_2 \ar@<0.5ex>[dl]^{V_{\alpha_2}} \\
 & V_5 \ar@<0.5ex>[ul]^{0} \ar@<0.5ex>[ur]^{0}\ar@<0.5ex>[dl]^{0} \ar@<0.5ex>[dr]^{0} & \\
V_4 \ar@<0.5ex>[ur]^{V_{\alpha_4}} & & V_3\ar@<0.5ex>[ul]^{V_{\alpha_3}}
}\end{aligned} \hspace{0.4in}
\beta(V): \hspace{0.1in} \begin{aligned}\xymatrix@=2pc@L=0.1pc{
V^*_1 \ar@<0.5ex>[dr]^{0} & & V_2^* \ar@<0.5ex>[dl]^{0} \\
 & V_5^* \ar@<0.5ex>[ul]^{V^*_{\alpha_1}} \ar@<0.5ex>[ur]^{V^*_{\alpha_2}}\ar@<0.5ex>[dl]^{V^*_{\alpha_4}} \ar@<0.5ex>[dr]^{V^*_{\alpha_3}} & \\
V^*_4 \ar@<0.5ex>[ur]^{0} & & V^*_3\ar@<0.5ex>[ul]^{0}
}\end{aligned}\]

Clearly, $\alpha,\beta$ send indecomposables to indecomposables. For example, for an arbitrary $n>0$, we have in $\rep(\tilde{\Q},\tilde{\I})$ two 1-parameter families of indecomposables $\alpha(R_n(\lambda))$ and $\beta(R_n(\lambda))$ in the dimension vector $(n,n,n,n,2n)$ induced by the representations $R_n(\lambda)\in\rep(\hat{D}_4)$ described in Example \ref{ex:dee4hat}.

Note that we have 4 projective-injective indecomposable representations of $(\tilde{\Q},\tilde{\I})$ that are not obtained from $\alpha,\beta$, namely: $\sP^1 = \sI^2$, $\sP^2 = \sI^1$, $\sP^3 = \sI^4$ and $\sP^4=\sI^3$.

As explained in Example \ref{ex:dee4hat}, the indecomposable representations of $\hat{D}_4$ are classified. Using this, we obtain the classification for $(\tilde{\Q},\tilde{\I})$:

\begin{theorem}\label{thm:tame}
Let $X$ be an indecomposable representation in $\rep(\tilde{\Q},\tilde{\I})$. Assume that $X$ is not isomorphic to either of the projective-injectives $\sP^1,\sP^2,\sP^3,\sP^4$. Then there is an indecomposable $V$ in $\rep(\hat{D}_4)$ such that $X$ is isomorphic to either $\alpha(V)$ or $\beta(V)$. In particular, the quiver $(\tilde{\Q},\tilde{\I})$ is of (domestic) tame representation type.
\end{theorem}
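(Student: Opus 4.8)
The statement concerns only the component $(\tilde{\Q},\tilde{\I})$, so that is where all the work lies (for the other components of $\Q$ the indecomposables are just the simples, together with the two extra projective covers in the two-cycle component). The plan for $(\tilde{\Q},\tilde{\I})$ is a chain of three reductions: first turn the four outer vertices into honest leaves by separating nodes; then apply Lemma~\ref{lem:addrel} to annihilate the four ``diagonal'' length-two paths through the central vertex~$5$; and finally separate vertex~$5$ itself, which by then will have become a node, to land in a disjoint union of two copies of $\hat{D}_4$. The functors $\alpha$ and $\beta$ will reappear by unwinding the reductions, since both Lemma~\ref{lem:node} and Lemma~\ref{lem:addrel} are compatible with passing to (non-simple) indecomposables.

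First I would observe that each vertex $i\in\{1,2,3,4\}$ has exactly one incoming arrow $\beta_i$ and one outgoing arrow $\alpha_i$, and that $\beta_i\alpha_i$ lies in $\tilde{\I}$; hence each $i$ is a node. Separating all four (Lemma~\ref{lem:node}) identifies the non-simple indecomposables of $(\tilde{\Q},\tilde{\I})$ with those of a quiver $(\Q'',\I'')$ in which $1,2,3,4$ become sources carrying a single arrow $\alpha_i\colon i\to 5$ each, the new vertices $1',2',3',4'$ become sinks carrying a single arrow $\beta_i\colon 5\to i'$ each, and the only relations left are $\alpha_i\beta_j=0$ for all $(i,j)$ except the four diagonal pairs $(1,3),(3,1),(2,4),(4,2)$. (The five simple objects of $(\tilde{\Q},\tilde{\I})$ are handled directly: each is $\alpha$ of the corresponding simple of $\hat{D}_4$.) In $(\Q'',\I'')$, the local picture at vertex $5$ formed by the source-leaf $1$ (arrow $\alpha_1$), the sink-leaf $3'$ (arrow $\beta_3$), and the relations $\alpha_i\beta_3=0$ for $i\neq 1$ and $\alpha_1\beta_j=0$ for $j\neq 3$ is exactly the configuration of Lemma~\ref{lem:addrel}; so every indecomposable $V$ of $(\Q'',\I'')$ with $V\not\cong\sP^1$ satisfies $V(\alpha_1\beta_3)=0$. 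Repeating this over the other three diagonal pairs produces four exceptional projectives, which under the identification above are precisely the four projective-injectives $\sP^1,\sP^2,\sP^3,\sP^4$ of $(\tilde{\Q},\tilde{\I})$ excluded in the statement.

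Consequently, if $X$ is an indecomposable of $(\tilde{\Q},\tilde{\I})$ other than these four and other than a simple, the associated indecomposable of $(\Q'',\I'')$ satisfies $\alpha_i\beta_j=0$ for all $i,j$, so the central vertex~$5$ is a node of the quiver obtained from $(\Q'',\I'')$ by adjoining these relations. Separating it splits that quiver into a disjoint union of two copies of $\hat{D}_4$, namely (relabelings of) the subquivers $\Q(\alpha)$ and $\Q(\beta)$, and the given indecomposable is supported on exactly one of them. Unwinding the three reductions, an indecomposable living on $\Q(\alpha)$ is carried back to $\alpha(V)$ for an indecomposable $V\in\rep(\hat{D}_4)$, while one living on $\Q(\beta)$ is carried to $\beta(V)$ — the vector-space duality built into the definition of $\beta$ being exactly what reconciles the reversed orientation of the $\beta$-arrows. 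Together with the treatment of the simples, this shows every $X\not\cong\sP^1,\sP^2,\sP^3,\sP^4$ is $\alpha(V)$ or $\beta(V)$. Finally, $\alpha$ and $\beta$ are fully faithful (contravariantly for $\beta$), hence send the unique one-parameter family $R_n(\lambda)$ of $\hat{D}_4$ in dimension vector $(n,n,n,n,2n)$ to one-parameter families $\alpha(R_n(\lambda))$ and $\beta(R_n(\lambda))$; these exhaust the one-parameter families of $(\tilde{\Q},\tilde{\I})$, at most two per dimension, so the domestic tameness of $\hat{D}_4$ (Example~\ref{ex:dee4hat}) transfers to $(\tilde{\Q},\tilde{\I})$.

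The step I expect to be the main obstacle is the bookkeeping in the last paragraph: confirming that the composite of the indecomposable-correspondence functors coming from the repeated use of Lemma~\ref{lem:node} literally reproduces $\alpha$ and $\beta$ — in particular that on the $\Q(\beta)$-side one recovers the duals $V^*_i$ and transposes $V^*_{\alpha_i}$ occurring in the definition of $\beta$ rather than some other identification — and matching each exceptional projective $\sP^y$ delivered by the four applications of Lemma~\ref{lem:addrel} with the correct projective-injective of $(\tilde{\Q},\tilde{\I})$. By contrast, checking the hypotheses of Lemmas~\ref{lem:node} and~\ref{lem:addrel} against the explicit relations $\tilde{\I}$, and noting that each of $\sP^1,\dots,\sP^4$ has both a nonzero $\alpha$-map and a nonzero $\beta$-map and so is genuinely neither an $\alpha(V)$ nor a $\beta(V)$, is routine.
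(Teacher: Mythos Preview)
Your proposal is correct and follows essentially the same approach as the paper: separate the four outer nodes via Lemma~\ref{lem:node}, apply Lemma~\ref{lem:addrel} four times to kill the remaining diagonal compositions $\alpha_1\beta_3,\alpha_3\beta_1,\alpha_2\beta_4,\alpha_4\beta_2$ (producing exactly the four exceptional projective-injectives), then separate the now-node vertex~$5$ to land in two disjoint copies of $\hat{D}_4$. Your write-up is in fact more explicit than the paper's on the unwinding step and on the separate handling of the simples, both of which the paper leaves implicit.
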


\begin{proof}
Due to the relations $\beta_i\alpha_i = 0$, the vertices $1,2,3,4$ of $(\tilde{\Q},\tilde{\I})$ are nodes as defined Section \ref{subsec:quivers}. After separating each of these nodes, we obtain the following quiver $(\Q',\I')$

\[\xymatrix@=2.3pc@L=0.15pc{
1' & \ar[dr]^{\alpha_1} 1 & &  2' & 2 \ar[dll]^{\alpha_2}\\
 & & 5 \ar[ull]^{\beta_1} \ar[ur]^{\beta_2} \ar[dl]^{\beta_4} \ar[drr]^{\beta_3} & & \\
4 \ar[urr]^{\alpha_4} & 4' & & 3 \ar[ul]^{\alpha_3} & 3' 
}\]
with relations generated by $\alpha_i \beta_i$, where $i=1,2,3,4$, and $\alpha_1\beta_2,\alpha_1\beta_4,\alpha_2\beta_1,\alpha_2\beta_3,\alpha_3\beta_2,\alpha_3\beta_4,\alpha_4\beta_1,\alpha_4\beta_3$. By Lemma \ref{lem:node}, we can assume $X$ is an indecomposable of $(\Q',\I')$. 

Now we apply Lemma \ref{lem:addrel} four consecutive times. Since $X$ is not isomorphic to $\sP^1,\sP^2,\sP^3,\sP^4$, we obtain that the indecomposable $X$ must be a representation of the quiver $(\Q',\I'')$, where $\I''$ is obtained from $\I'$ by adding the relations $\alpha_1\beta_3, \alpha_3\beta_1,\alpha_2\beta_4,\alpha_4\beta_2$. But now the vertex $5$ is a node of $(\Q',\I'')$, and by separating the node, we obtain two copies of $\hat{D}_4$ (one copy with the middle vertex a sink and one copy with the middle vertex a source). Using Lemma \ref{lem:node} again, we obtain the result.
\end{proof}

\begin{remark}
It can be shown that the quiver $(\Q',\I')$ obtained by separating the 4 nodes has global dimension 2, and any each indecomposable $X \in \rep(\Q',\I')$, either the projective dimension or the injective dimension of $X$ is at most $1$. Hence $(\Q',\I')$ is quasi-tilted by \cite[Chapter XX, Theorem 3.10]{sim-sko3}, which suggests that an alternative approach could be used to classify its indecomposables via tilting theory (for more on quasi-tilted algebras see \cite[Section XX.3]{sim-sko3}; for more on tilting theory, see \cite[Chapter VI]{ass-sim-sko}).
\end{remark}

\begin{remark}
One can understand the morphisms between the indecomposable representations via Auslander-Reiten theory -- see \cite[Chapter VI]{ass-sim-sko} for the basic theory and relevant terminology. We can describe the Auslander-Reiten quiver of $(\tilde{\Q},\tilde{\I})$ by standard methods as follows. There are two ${\hat D}_4$ type sub-quivers of $(\tilde{\Q},\tilde{\I})$: the quiver $\Q(\alpha)$ with arrows $\alpha_i$ and the quiver $\Q(\beta)$ with arrows $\beta_j$. The Auslander-Reiten quivers for both of these sub-quivers have three parts: preprojective, preinjective and the regular one consisting of so-called tubes (see \cite[Section XIII.3]{sim-sko2}).

Now we describe the components of the Auslander-Reiten quiver of  $(\tilde{\Q},\tilde{\I})$. First, the tubes of both sub-quivers $\Q(\alpha)$ and $\Q(\beta)$ appear as separate components of the Auslander-Reiten quiver of $(\tilde{\Q},\tilde{\I})$. Next, the preinjective component of the quiver $\Q(\alpha)$ and the preprojective component of $\Q(\beta)$  are glued together along the four simple representations $\sS^1, \sS^2, \sS^3, \sS^4$ to become one component of the Auslander-Reiten quiver of $(\tilde{\Q},\tilde{\I})$. Finally, the last component of the Auslander-Reiten quiver of $(\tilde{\Q},\tilde{\I})$ is obtained by gluing together the preinjective component of $\Q(\beta)$ and the preprojective component of $\Q(\alpha)$ along the simple module $\sS^5$. The four projective-injective modules $\sP^1,\sP^2, \sP^3, \sP^4$ are also part of this component, and they occur in the middle along with $\sS^5$. They are part of four almost split sequences, described as follows. Denote by $M_{\alpha_i}$ the two dimensional module with the nonzero linear map $\alpha_i$ and by $M_{\beta_j}$ the two dimensional module with the nonzero linear map $\beta_j$. Then we have an almost split sequence
\[
\begin{matrix}&&\sP^3&&\\
&\nearrow&&\searrow&\\
M_{\beta_1}&&&&M_{\alpha_3}\\
&\searrow&&\nearrow\\
&&\sS^5&&
\end{matrix}
\]
and three more analogous ones by symmetry.
\end{remark}

\section{Some local cohomology calculations}\label{sec:loccoh}

We conclude this article with a couple of examples of local cohomology computations for equivariant $\D$-modules, with support in orbit closures. Many of these calculations are standard, but some require an understanding of the structure of the category $\opmod_{\GL}(\D_X)$. We begin with the following general observation. Suppose that $M$ is a $\GL$-equivariant $\D_X$-module with support contained in $\ol{O_i}$. It follows that for $j\geq i$ we have
\[H^k_{\ol{O_j}}(M) = \begin{cases}
 M & \mbox{if }k = 0, \\
 0 & \mbox{otherwise}.
\end{cases}\]
We will thus only be interested in studying the local cohomology groups $H^k_{\ol{O_j}}(M)$ when $j<i$. For iterated local cohomology groups of $S$ with respect to a family of $\GL$-invariant closed subsets we have the following.

\begin{theorem}\label{thm:iteratedloccoh}
The non-zero iterated local cohomology groups $H^{i_1}_{\ol{O_{j_1}}}(\cdots H^{i_k}_{\ol{O_{j_k}}}(S))$ with $j_1<\cdots<j_k$ are:
\begin{equation}\label{eq:HS}
  H^1_{\ol{O_3}}(S)=\frac{S_{\Delta}}{S},\quad  H^2_{\ol{O_2}}(S)= D_0,\quad H^4_{O_0}(S) = E,
 \end{equation}
\begin{equation}\label{eq:HHS}
  H^1_{\ol{O_2}}(H^1_{\ol{O_3}}(S))=D_0,\quad H^3_{O_0}(H^1_{\ol{O_3}}(S))= E,\quad H^2_{O_0}(H^2_{\ol{O_2}}(S))= E,
 \end{equation}
\begin{equation}\label{eq:HHHS}
  H^2_{O_0}(H^1_{\ol{O_2}}(H^1_{\ol{O_3}}(S)))=E.
\end{equation}
\end{theorem}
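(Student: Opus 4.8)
The plan is to derive the entire theorem from a handful of ``base'' computations for $S$ together with one structural mechanism: the Grothendieck spectral sequence attached to a chain of nested supports. Recall the orbit closure ordering $O_0\subset\ol{O_2}\subset\ol{O_3}\subset\ol{O_4}=X$, so that whenever $j_1<j_2$ we have $\ol{O_{j_1}}\subseteq\ol{O_{j_2}}$ and hence an identity of functors $\Gamma_{\ol{O_{j_1}}}=\Gamma_{\ol{O_{j_1}}}\circ\Gamma_{\ol{O_{j_2}}}$. Since $\Gamma_{\ol{O_{j_2}}}$ preserves injectivity of (quasi-coherent) $\mc{O}_X$-modules, Grothendieck's theorem on composite functors gives, for every $M\in\opmod_{\GL}(\D_X)$, a spectral sequence $E_2^{p,q}=H^p_{\ol{O_{j_1}}}\big(H^q_{\ol{O_{j_2}}}(M)\big)\Rightarrow H^{p+q}_{\ol{O_{j_1}}}(M)$. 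The key observation is that \emph{if $H^\bullet_{\ol{O_{j_2}}}(M)$ is concentrated in a single cohomological degree $q_0$, then this spectral sequence degenerates}, yielding $H^p_{\ol{O_{j_1}}}\big(H^{q_0}_{\ol{O_{j_2}}}(M)\big)\cong H^{p+q_0}_{\ol{O_{j_1}}}(M)$ for all $p$.

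\textbf{Base cases ($M=S$).} First I would record that $H^\bullet_{\ol{O_3}}(S)$, $H^\bullet_{\ol{O_2}}(S)$, $H^\bullet_{O_0}(S)$ are each concentrated in one degree. For $\ol{O_3}=V(\Delta)$, a hypersurface in the domain $S$, one has $H^i_{\ol{O_3}}(S)=0$ for $i\neq 1$ and $H^1_{\ol{O_3}}(S)=S_\Delta/S$ (as already noted in Remark~\ref{rem:P-in-Sdelta}). For $O_0=\{0\}$ with $S$ a $4$-dimensional regular ring, $H^i_{O_0}(S)=0$ for $i\neq 4$ and $H^4_{O_0}(S)=\mc{L}(O_0;X)=E$. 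For $\ol{O_2}$, the cone over the twisted cubic (codimension $2$), the vanishing $H^i_{\ol{O_2}}(S)=0$ for $i<2$ is automatic, the vanishing for $i>2$ follows since the twisted cubic is a set-theoretic complete intersection (so $\ol{O_2}$ has local cohomological dimension $2$), and $H^2_{\ol{O_2}}(S)=\mc{L}(\ol{O_2};X)=D_0$ is part of the setup; equivalently all of this is contained in the character computation of \cite{raicu-veronese}, which in particular forces the a priori possible quotient $H^2_{\ol{O_2}}(S)/\mc{L}(\ol{O_2};X)$ (supported at $O_0$) to vanish. This establishes (\ref{eq:HS}).

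\textbf{Iterating.} I would then peel the iterated groups from the inside out. The degeneration observation with $(j_2,j_1)=(3,2)$ gives $H^p_{\ol{O_2}}(H^1_{\ol{O_3}}(S))\cong H^{p+1}_{\ol{O_2}}(S)$, nonzero only for $p=1$ with value $D_0$; with $(j_2,j_1)=(3,0)$ it gives $H^p_{O_0}(H^1_{\ol{O_3}}(S))\cong H^{p+1}_{O_0}(S)$, nonzero only for $p=3$ with value $E$; with $(j_2,j_1)=(2,0)$ it gives $H^p_{O_0}(H^2_{\ol{O_2}}(S))\cong H^{p+2}_{O_0}(S)$, nonzero only for $p=2$ with value $E$. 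This is (\ref{eq:HHS}), and note that each of these outputs is again concentrated in one degree. In particular $H^\bullet_{\ol{O_2}}(H^1_{\ol{O_3}}(S))$ is concentrated in degree $1$ with value $D_0$, so $H^p_{O_0}\big(H^1_{\ol{O_2}}(H^1_{\ol{O_3}}(S))\big)=H^p_{O_0}(D_0)\cong H^{p+2}_{O_0}(S)$, nonzero only for $p=2$ with value $E$, which is (\ref{eq:HHHS}). To see that these are \emph{all} the nonzero iterated groups one runs through the finitely many strictly increasing chains in $\{0,2,3\}$ (any chain involving the index $4$ being trivial, since $\ol{O_4}=X$): at each step the inner group is concentrated in one degree, so the reduction applies and no further nonzero group can arise.

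\textbf{Main obstacle.} The step I expect to require the most care is the $\ol{O_2}$ base case --- pinning down that the local cohomological dimension of $\ol{O_2}$ equals its codimension $2$, and that $H^2_{\ol{O_2}}(S)$ is exactly the simple module $D_0$ rather than a nonsplit extension of $E$ by $D_0$. Both facts are known (the first from the twisted cubic being a set-theoretic complete intersection, the second from the explicit $\GL$-character of $D_0$ in \cite{raicu-veronese}), so in the write-up this amounts to citing these inputs; everything downstream is a formal manipulation of the Grothendieck spectral sequence for nested supports.
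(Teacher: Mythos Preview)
Your proposal is correct and follows essentially the same approach as the paper: establish the three base cases (\ref{eq:HS}) by citing the hypersurface argument for $\ol{O_3}$, the maximal ideal computation for $O_0$, and \cite{raicu-veronese} for $\ol{O_2}$, then use the degeneration of the Grothendieck spectral sequence $H^p_{\ol{O_i}}(H^q_{\ol{O_j}}(S))\Rightarrow H^{p+q}_{\ol{O_i}}(S)$ (valid since each inner group is concentrated in a single degree) to derive (\ref{eq:HHS}) and (\ref{eq:HHHS}). Your invocation of the set-theoretic complete intersection property of the twisted cubic to bound the local cohomological dimension of $\ol{O_2}$ is a pleasant addition, though the paper simply absorbs this into the citation of \cite{raicu-veronese}.
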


The formulas (\ref{eq:HHS}--\ref{eq:HHHS}) follow from (\ref{eq:HS}) and the fact that the orbit closures are cohomological complete intersections (see \cite[Example~4.2]{lyub-nmbs}). In particular, the only non-vanishing Lyubeznik number for each orbit closure is the highest one. Based on Theorem~\ref{thm:iteratedloccoh} we can also determine the local cohomology groups of simple $\GL$-equivariant $\D_X$-modules with support in the orbit closures as follows.

\begin{theorem}\label{thm:loccoh-simples}
 The only non-zero local cohomology modules $H^k_{\ol{O_j}}(M)$ when $M$ is a simple $\GL$-equivariant $\D_X$-module with support $\ol{O_i}$ and $j<i$ are the ones in (\ref{eq:HS}), as well as the following:
 \begin{equation}\label{eq:HD0}
  H^2_{O_0}(D_0)=E,
 \end{equation}
 \begin{equation}\label{eq:HP}
  H^1_{\ol{O_2}}(P)=D_0 \oplus E,\quad  H^1_{O_0}(P) = H^3_{O_0}(P) = E,
 \end{equation}
 \begin{equation}\label{eq:HQ0}
  H^1_{\ol{O_3}}(Q_0)=\frac{(Q_0)_{\Delta}}{Q_0},\quad  H^2_{\ol{O_2}}(Q_0)= H^2_{O_0}(Q_0) = E,
 \end{equation}
 \begin{equation}\label{eq:HG}
  H^1_{\ol{O_3}}(G_1) = H^1_{\ol{O_2}}(G_1) = D_1,\quad  H^1_{\ol{O_3}}(G_{-1}) = H^1_{\ol{O_2}}(G_{-1}) = D_2.
 \end{equation} 
\end{theorem}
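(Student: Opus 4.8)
\emph{Strategy and the easy cases.} The plan is to reduce every group in the statement to a handful of computations by repeatedly using three standard facts: the long exact sequence $\cdots\to\mc{H}^i_{Z'}(M)\to\mc{H}^i_Z(M)\to\mc{H}^i_{Z\setminus Z'}(M)\to\cdots$ recalled in Section~\ref{subsec:Dmods}, the composition rule $R\Gamma_Z\circ R\Gamma_{Z'}\simeq R\Gamma_Z$ for $Z\subseteq Z'$, and the vanishing $R\Gamma_{\ol{O_3}}(M)=0$ whenever $\Delta$ acts invertibly on $M$ (which, since $O_0\subset\ol{O_2}\subset\ol{O_3}=V(\Delta)$, forces $R\Gamma_{\ol{O_2}}(M)=R\Gamma_{O_0}(M)=0$ as well). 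Running through the $14$ simple modules, several cases are then immediate: $G_2=F_2$, $G_3=F_3$, $G_4=F_4$, $Q_1$ and $Q_2$ are all $S_\Delta$-modules, so $\mc{H}^k_{\ol{O_3}}$, $\mc{H}^k_{\ol{O_2}}$ and $\mc{H}^k_{O_0}$ vanish on them; the formulas $(\ref{eq:HS})$ for $S$ are those recorded in the Introduction, and the vanishing of all other $\mc{H}^k_{\ol{O_j}}(S)$ uses, as in Theorem~\ref{thm:iteratedloccoh}, that the orbit closures are cohomological complete intersections, which gives $R\Gamma_{\ol{O_3}}(S)=(S_\Delta/S)[-1]$, $R\Gamma_{\ol{O_2}}(S)=D_0[-2]$ and $R\Gamma_{O_0}(S)=E[-4]$; and for $D_0=\mc{H}^2_{\ol{O_2}}(S)$ one gets $R\Gamma_{O_0}(D_0)[-2]\simeq R\Gamma_{O_0}\bigl(R\Gamma_{\ol{O_2}}(S)\bigr)\simeq R\Gamma_{O_0}(S)\simeq E[-4]$, i.e. $R\Gamma_{O_0}(D_0)=E[-2]$, which is $(\ref{eq:HD0})$.

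\emph{The module $P$.} I will work with the exact sequence $0\to P\to S_\Delta/S\to E\to 0$ of Corollary~\ref{cor:factors-Sdelta} and Remark~\ref{rem:P-in-Sdelta}. Since $S_\Delta/S=\mc{H}^1_{\ol{O_3}}(S)=R\Gamma_{\ol{O_3}}(S)[1]$, the composition rule yields $R\Gamma_{\ol{O_2}}(S_\Delta/S)\simeq R\Gamma_{\ol{O_2}}(S)[1]=D_0[-1]$ and $R\Gamma_{O_0}(S_\Delta/S)\simeq R\Gamma_{O_0}(S)[1]=E[-3]$. Applying $R\Gamma_{\ol{O_2}}$ and $R\Gamma_{O_0}$ to the displayed sequence, using $R\Gamma_{\ol{O_2}}(E)=R\Gamma_{O_0}(E)=E$, the resulting long exact sequences give a short exact sequence $0\to E\to\mc{H}^1_{\ol{O_2}}(P)\to D_0\to 0$ with $\mc{H}^k_{\ol{O_2}}(P)=0$ for $k\neq 1$, together with $\mc{H}^1_{O_0}(P)=\mc{H}^3_{O_0}(P)=E$ and $\mc{H}^k_{O_0}(P)=0$ otherwise. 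The only non-formal point is that the extension of $D_0$ by $E$ splits; this holds because $\Ext^1_{\D_X}(D_0,E)=0$, which one reads off from the quiver of Theorem~\ref{thm:quiver} (there is no arrow $d_0\to e$). This gives $(\ref{eq:HP})$.

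\emph{The modules $G_{\pm 1}$ and $D_1,D_2$.} From $0\to G_1\to F_1\to D_1\to 0$ and $R\Gamma_{\ol{O_j}}(F_1)=0$ we obtain $R\Gamma_{\ol{O_j}}(G_1)\simeq R\Gamma_{\ol{O_j}}(D_1)[-1]$ for $j=0,2,3$; since $D_1$ is supported on $\ol{O_2}$ this immediately gives $\mc{H}^1_{\ol{O_3}}(G_1)=\mc{H}^1_{\ol{O_2}}(G_1)=D_1$ with everything else zero, and symmetrically for $G_{-1}$ and $D_2$ using $F_{-1}$. It remains to prove $R\Gamma_{O_0}(D_1)=R\Gamma_{O_0}(D_2)=0$. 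For this I will use the finite morphism $\nu\colon W\to X$, $l\mapsto l^{\otimes 3}$, which identifies $\ol{O_2}$ with the quotient $W/\mu_3$. Proper base change along $\nu$, with $\nu^{-1}(O_0)$ a reduced point, gives $R\Gamma_{O_0}(\nu_*\mc{O}_W)\simeq\nu_*R\Gamma_{O_0}(\mc{O}_W)\simeq\nu_*\bigl(E_W[-2]\bigr)\simeq E_X[-2]$. In particular $\mc{H}^0_{O_0}(\nu_*\mc{O}_W)=0$, so $\nu_*\mc{O}_W$ — which is self-dual and restricts over $O_2$ to the direct sum of the three rank-one local systems attached to the characters of $\mu_3$ — has no submodule (hence no quotient) supported at the origin, and is therefore the intermediate extension of that sum, i.e. $\nu_*\mc{O}_W\simeq D_0\oplus D_1\oplus D_2$. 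Comparing $R\Gamma_{O_0}(\nu_*\mc{O}_W)\simeq E_X[-2]$ with $R\Gamma_{O_0}(D_0)=E_X[-2]$ then forces $R\Gamma_{O_0}(D_1)=R\Gamma_{O_0}(D_2)=0$, completing the cases $G_{\pm 1}$, $D_1$ and $D_2$.

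\emph{The module $Q_0$ and the main obstacle.} Here I will use $0\to Q_0\to(Q_0)_\Delta\to(Q_0)_\Delta/Q_0\to 0$ together with $0\to P\to(Q_0)_\Delta/Q_0\to D_0\to 0$ coming from Corollary~\ref{cor:factors-FD0-delta} and Lemma~\ref{lem:indQ0-del}. Since $R\Gamma_{\ol{O_j}}\bigl((Q_0)_\Delta\bigr)=0$ we get $R\Gamma_{\ol{O_j}}(Q_0)\simeq R\Gamma_{\ol{O_j}}\bigl((Q_0)_\Delta/Q_0\bigr)[-1]$; as $(Q_0)_\Delta/Q_0$ is supported on $\ol{O_3}$ this gives $\mc{H}^1_{\ol{O_3}}(Q_0)=(Q_0)_\Delta/Q_0$ and nothing else. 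For $j=2$ and $j=0$ one feeds $R\Gamma_{\ol{O_2}}(P)=(D_0\oplus E)[-1]$ and $R\Gamma_{O_0}(P)$ from the paragraph on $P$ into the triangle coming from the second sequence, and the subtle point is the behaviour of the connecting maps. The map $D_0=\mc{H}^0_{\ol{O_2}}(D_0)\to\mc{H}^1_{\ol{O_2}}(P)=D_0\oplus E$ is injective because $\mc{H}^0_{\ol{O_2}}\bigl((Q_0)_\Delta/Q_0\bigr)=0$ (the socle of the indecomposable module $(Q_0)_\Delta/Q_0$ is $P$, which is not supported on $\ol{O_2}$), whence $\mc{H}^2_{\ol{O_2}}(Q_0)=E$ with the rest vanishing. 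The map $E=\mc{H}^2_{O_0}(D_0)\to\mc{H}^3_{O_0}(P)=E$, however, must be shown to be an isomorphism, and this I expect to be the main obstacle. I plan to settle it by identifying $R\Gamma_{O_0}(Q_0)=R\Gamma_{O_0}\bigl(\mc{F}(D_0)\bigr)$ with a shift of $E_X\otimes IH^\bullet_c(\ol{O_2})$, using the standard fact that restriction of the Fourier transform to the origin computes compactly supported de Rham cohomology, together with $D_0=\mc{L}(\ol{O_2};X)$; since $\ol{O_2}=W/\mu_3$ has $IH^\bullet_c$ concentrated in top degree, $R\Gamma_{O_0}(Q_0)$ is then concentrated in a single cohomological degree, which must be $2$ because $\mc{H}^2_{O_0}(Q_0)=E$ has already been produced, so that $\mc{H}^3_{O_0}(Q_0)=\mc{H}^4_{O_0}(Q_0)=0$ and $(\ref{eq:HQ0})$ follows. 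Should the Fourier-transform input prove awkward, an alternative is a direct estimate of $\bigl[\mc{H}^{\ge 3}_{O_0}(Q_0)\bigr]$ in $\Gamma(\GL)$ in the spirit of Section~\ref{sec:simple-Dmods}. Finally, checking the remaining simples shows there are no further nonzero groups, completing the proof.
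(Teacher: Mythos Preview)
Your argument tracks the paper's proof closely for $D_0$, $P$, the easy cases, and for $H^\bullet_{\ol{O_2}}(Q_0)$; the organization via $R\Gamma_Z\circ R\Gamma_{Z'}\simeq R\Gamma_Z$ and the long exact sequences is exactly what the paper does. Two points of divergence are worth noting.

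For the vanishing $R\Gamma_{O_0}(D_1)=R\Gamma_{O_0}(D_2)=0$ you take a geometric route via the cube map $\nu\colon W\to X$ and the decomposition $\int_\nu\mc{O}_W\simeq D_0\oplus D_1\oplus D_2$. This is a nice argument and it works. The paper instead gives a two-line character computation: any $H^i_{O_0}(D_j)$ is a sum of copies of $E$, but by (\ref{eq:wts-D012}) every weight $\ll$ occurring in $D_j$ (and hence in any of its local cohomology groups) has $\ll_1+\ll_2\not\equiv 0\pmod 3$, whereas every weight of $E$ has $\ll_1+\ll_2\equiv 0\pmod 3$ by (\ref{eq:character-E}). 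Your approach is more conceptual; the paper's is more elementary given the character machinery already in place.

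For $R\Gamma_{O_0}(Q_0)$, however, you have manufactured an obstacle that does not exist. After you compute $R\Gamma_{\ol{O_2}}(Q_0)=E[-2]$, the very composition rule you stated in your strategy paragraph finishes the job in one line:
\[
R\Gamma_{O_0}(Q_0)\;\simeq\;R\Gamma_{O_0}\bigl(R\Gamma_{\ol{O_2}}(Q_0)\bigr)\;\simeq\;R\Gamma_{O_0}(E)[-2]\;\simeq\;E[-2],
\]
since $E$ is already supported at the origin. This is precisely what the paper does: it degenerates the spectral sequence $H^p_{O_0}(H^q_{\ol{O_2}}(Q_0))\Rightarrow H^{p+q}_{O_0}(Q_0)$ using the single nonzero column $q=2$. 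There is no need to analyze the connecting map $H^2_{O_0}(D_0)\to H^3_{O_0}(P)$, and hence no need for the Fourier--transform/intersection--cohomology input you sketch (which, as you note yourself, would require additional care). The ``main obstacle'' disappears once you route through $\ol{O_2}$ rather than attacking $O_0$ directly from the triangle on $(Q_0)_\Delta/Q_0$.
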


Perhaps the most interesting computations in Theorem~\ref{thm:loccoh-simples} are that of $H^1_{\ol{O_2}}(P)=D_0$, as well as that of the local cohomology groups of $Q_0$, particularly since they require some of the structure theory that we developed for the category of $\GL$-equivariant $\D_X$-modules.

\begin{proof}[Proof of Theorem~\ref{thm:iteratedloccoh}]
 Since $\ol{O_3}$ is the hypersurface defined by $\Delta=0$ it follows that $H^i_{\ol{O_3}}(S) = S_{\Delta}/S$ for $i=0$, and it is $0$ otherwise. The calculation of $H^{\bullet}_{\ol{O_2}}(S)$ is a special case of \cite[Theorem~1.2]{raicu-veronese}, while that of $H^{\bullet}_{O_0}(S)$ is known since the ideal of $O_0$ is the maximal homogeneous ideal of $S$ \cite[Corollary~A1.6]{geom-syzygies}.
 
 For $i<j$ we have a spectral sequence $H^p_{\ol{O_i}}(H^q_{\ol{O_j}}(S)) \Rightarrow H^{p+q}_{\ol{O_i}}(S)$. We get using (\ref{eq:HS}) that this spectral sequence degenerates, since $H^q_{\ol{O_j}}(S)$ is non-zero for a unique value of $q$. This proves (\ref{eq:HHS}), as well as the vanishing of the remaining modules $H^p_{\ol{O_i}}(H^q_{\ol{O_j}}(S))$. The equation (\ref{eq:HHHS}), as well as the vanishing of $H^{i_1}_{O_0}(H^{i_2}_{\ol{O_2}}(H^{i_3}_{\ol{O_3}}(S)))$ for $(i_1,i_2,i_3)\neq(2,1,1)$ follows from (\ref{eq:HS}--\ref{eq:HHS}) since $H^1_{\ol{O_2}}(H^1_{\ol{O_3}}(S))= D_0 = H^2_{\ol{O_2}}(S)$.
\end{proof}

\begin{proof}[Proof of Theorem~\ref{thm:loccoh-simples}]
 The equation (\ref{eq:HD0}) and the vanishing of $H^i_{O_0}(D_0)$ for $i\neq 2$ follows from (\ref{eq:HS}--\ref{eq:HHS}). To prove (\ref{eq:HP}), consider the non-split exact sequence
 \begin{equation}\label{eq:PSdE}
 0 \lra P \lra S_{\Delta}/S \lra E \lra 0
 \end{equation}
 coming from Corollary~\ref{cor:factors-Sdelta} and Lemma~\ref{lem:indS-del}. Since $E$ is supported at $O_0$ we have $H^i_{\ol{O_j}}(E) = E$ for $i=0$ and it is $0$ otherwise, while the local cohomology modules of $S_{\Delta}/S = H^1_{\ol{O_3}}(S)$ have been computed in (\ref{eq:HHS}). The long exact sequence obtained by applying $H^0_{\ol{O_2}}(\bullet)$ to (\ref{eq:PSdE}) yields then a short exact sequence
 \[0\lra E \lra H^1_{\ol{O_2}}(P) \lra D_0 \lra 0\]
 and the vanishing of $H^i_{\ol{O_2}}(P)$ for $i\neq 1$. By Theorem~\ref{thm:quiver}, any extension between $D_0$ and $E$ is split, hence $H^1_{\ol{O_2}}(P) = D_0 \oplus E$. We next consider the long exact sequence obtained by applying $H^0_{O_0}(\bullet)$ to (\ref{eq:PSdE}) and get
 \[ E = H^0_{O_0}(E) \simeq H^1_{O_0}(P),\quad H^3_{O_0}(P) \simeq H^3_{O_0}(S_{\Delta}/S) = E,\]
 and $H^i_{O_0}(P)=0$ for $i\neq 1,3$, proving (\ref{eq:HP}).
 
 The calculation of $H^i_{\ol{O_3}}(Q_0)$ follows from the fact that $\ol{O_3}$ is defined by $\Delta=0$. Since the only non-vanishing is obtained for $i=1$, the spectral sequence $H^p_{\ol{O_2}}(H^q_{\ol{O_3}}(Q_0)) \Rightarrow H^{p+q}_{\ol{O_2}}(Q_0)$ degenerates and we get $H^i_{\ol{O_2}}(Q_0) = H^{i-1}_{\ol{O_2}}(H^1_{\ol{O_3}}(Q_0))$. We have from Corollary~\ref{cor:factors-FD0-delta} and Lemma~\ref{lem:indQ0-del} a non-split exact sequence
 \[0\lra P \lra H^1_{\ol{O_3}}(Q_0) \lra D_0 \lra 0,\]
 which yields by applying $H^0_{\ol{O_2}}(\bullet)$, and using (\ref{eq:HP}) and the fact that $D_0$ has support in $\ol{O_2}$, an exact sequence
 \[0\lra H^0_{\ol{O_2}}(H^1_{\ol{O_3}}(Q_0)) \overset{\a}{\lra} D_0 \overset{\b}{\lra} D_0\oplus E \lra H^1_{\ol{O_2}}(H^1_{\ol{O_3}}(Q_0)) \lra 0,\]
 as well as the vanishing $H^i_{\ol{O_2}}(H^1_{\ol{O_3}}(Q_0))=0$ for $i\geq 2$. If the map $\a$ is non-zero, then it must be an isomorphism since $D_0$ is simple, which would show that $D_0$ is a submodule of $H^1_{\ol{O_3}}(Q_0)=(Q_0)_{\Delta}/Q_0$ and contradict Lemma~\ref{lem:indQ0-del}. It follows that $\a=0$, hence $H^1_{\ol{O_2}}(Q_0)=H^0_{\ol{O_2}}(H^1_{\ol{O_3}}(Q_0)) = 0$, and moreover we have $\coker(\b)=E$ which implies $H^2_{\ol{O_2}}(Q_0)=H^1_{\ol{O_2}}(H^1_{\ol{O_3}}(Q_0)) = E$. To finish the proof of (\ref{eq:HQ0}) we note that $H^q_{\ol{O_2}}(Q_0)$ is non-zero only for $q=2$, and therefore it yields the degeneration of the spectral sequence $H^p_{O_0}(H^q_{\ol{O_2}}(Q_0)) \Rightarrow H^{p+q}_{O_0}(Q_0)$. We get that $H^q_{\ol{O_2}}(Q_0)=E$ for $q=2$ and is $0$ otherwise. 
 
 Since $F_1/G_1 = D_1$ we have $(G_1)_{\Delta} = (F_1)_{\Delta} = F_1$ and therefore
 \[H^1_{\ol{O_3}}(G_1) = (G_1)_{\Delta}/G_1 = F_1/G_1 = D_1\]
 is the only non-vanishing group $H^i_{\ol{O_3}}(G_1)$. The standard (by now) spectral sequence argument shows that $H^i_{\ol{O_2}}(G_1) = D_1$ for $i=1$ and is $0$ otherwise. A similar argument, based on $F_{-1}/G_{-1}=D_2$, applies to show the rest of (\ref{eq:HG}). To conclude the determination of the local cohomology modules of $G_{\pm 1}$ it remains to prove that $H^i_{O_0}(G_{\pm 1})=0$ for all $i$, which in turn is a consequence of the fact that $H^i_{O_0}(D_j)=0$ for all~$i$ and $j=1,2$. This vanishing result follows since on the one hand the modules $H^i_{O_0}(D_j)$ are direct sums of copies of $E$ (beging supported on $O_0$), and on the other hand their characters may only involve terms of the form~$e^{\ll}$ with $|\ll| \equiv 1,2\ \mbox{(mod 3)}$ by (\ref{eq:wts-D012}). Comparing this with (\ref{eq:character-E}) we get the desired vanishing.
 
 Finally, if $M$ is any of the modules $G_2,G_3,G_4,Q_1,Q_2$ we have that $M_{\Delta}=M$, so $H^i_{\ol{O_3}}(M)=0$ for all~$i$. Using the spectral sequence, this shows that $H^i_{\ol{O_2}}(M)=H^i_{O_0}(M)=0$ for all $i$, concluding the proof.
\end{proof}

\section*{Acknowledgements}
Experiments with the computer algebra software Macaulay2 \cite{M2} have provided numerous valuable insights. Raicu acknowledges the support of the Alfred P. Sloan Foundation, and of the National Science Foundation Grant No.~1600765. Weyman acknowledges partial support of the Sidney Professorial Fund and of the National Science Foundation grant No.~1400740.

	\begin{bibdiv}
		\begin{biblist}

\bib{ass-sim-sko}{book}{
   author={Assem, Ibrahim},
   author={Simson, Daniel},
   author={Skowro\'nski, Andrzej},
   title={Elements of the representation theory of associative algebras. Vol. 1},
   series={London Mathematical Society Student Texts},
   volume={65},
   %note={Techniques of representation theory},
   publisher={Cambridge University Press, Cambridge},
   date={2006},
   pages={x+458},
   isbn={978-0-521-58423-4},
   isbn={978-0-521-58631-3},
   isbn={0-521-58631-3},
   review={\MR{2197389}},
}

\bib{braden-grinberg}{article}{
   author={Braden, Tom},
   author={Grinberg, Mikhail},
   title={Perverse sheaves on rank stratifications},
   journal={Duke Math. J.},
   volume={96},
   date={1999},
   number={2},
   pages={317--362},
   issn={0012-7094},
   review={\MR{1666554}},
   doi={10.1215/S0012-7094-99-09609-6},
}

\bib{geom-syzygies}{book}{
   author={Eisenbud, David},
   title={The geometry of syzygies},
   series={Graduate Texts in Mathematics},
   volume={229},
   note={A second course in commutative algebra and algebraic geometry},
   publisher={Springer-Verlag, New York},
   date={2005},
   pages={xvi+243},
   isbn={0-387-22215-4},
   review={\MR{2103875}},
}

\bib{galligo-granger-1}{article}{
   author={Galligo, A.},
   author={Granger, M.},
   author={Maisonobe, Ph.},
   title={$\D$-modules et faisceaux pervers dont le support singulier
   est un croisement normal},
   language={French},
   journal={Ann. Inst. Fourier (Grenoble)},
   volume={35},
   date={1985},
   number={1},
   pages={1--48},
   issn={0373-0956},
   review={\MR{781776 (88b:32027)}},
}

\bib{galligo-granger-2}{article}{
   author={Galligo, A.},
   author={Granger, M.},
   author={Maisonobe, Ph.},
   title={$\D$-modules et faisceaux pervers dont le support singulier
   est un croisement normal. II},
   language={French},
   note={Differential systems and singularities (Luminy, 1983)},
   journal={Ast\'erisque},
   number={130},
   date={1985},
   pages={240--259},
   issn={0303-1179},
   review={\MR{804057 (88b:32028)}},
}

\bib{gel-mac-vil}{article}{
   author={Gelfand, Sergei},
   author={MacPherson, Robert},
   author={Vilonen, Kari},
   title={Perverse sheaves and quivers},
   journal={Duke Math. J.},
   volume={83},
   date={1996},
   number={3},
   pages={621--643},
   issn={0012-7094},
   review={\MR{1390658}},
}

\bib{gel-pon}{article}{
   author={Gelfand, I. M.},
   author={Ponomarev, V. A.},
   title={Problems of linear algebra and classification of quadruples of
   subspaces in a finite-dimensional vector space},
   conference={
      title={Hilbert space operators and operator algebras},
      address={Proc. Internat. Conf., Tihany},
      date={1970},
   },
   book={
      publisher={North-Holland, Amsterdam},
   },
   date={1972},
   pages={163--237. Colloq. Math. Soc. J\'anos Bolyai, 5},
   review={\MR{0357428}},
}

\bib{M2}{article}{
          author = {Grayson, Daniel R.},
          author = {Stillman, Michael E.},
          title = {Macaulay 2, a software system for research
                   in algebraic geometry},
          journal = {Available at \url{http://www.math.uiuc.edu/Macaulay2/}}
        }

\bib{hot-tak-tan}{book}{
    author={Hotta, Ryoshi},
    author={Takeuchi, Kiyoshi},
    author={Tanisaki, Toshiyuki},
    title={{$D$}-modules, perverse sheaves, and representation theory},
    series={Progress in Mathematics},
    volume={236},
    note={Translated from the 1995 Japanese edition by Takeuchi},
    publisher={Birkh\"auser Boston, Inc., Boston, MA},
    year={2008},
    pages={xii+407},
    isbn= {978-0-8176-4363-8},
    review={\MR{2357361}},
    doi={10.1007/978-0-8176-4523-6},
}

\bib{jantzen}{book}{
   author={Jantzen, Jens Carsten},
   title={Representations of algebraic groups},
   series={Pure and Applied Mathematics},
   volume={131},
   publisher={Academic Press, Inc., Boston, MA},
   date={1987},
   pages={xiv+443},
   isbn={0-12-380245-8},
   review={\MR{899071}},
}

\bib{landsberg-manivel}{article}{
   author={Landsberg, J. M.},
   author={Manivel, L.},
   title={Series of Lie groups},
   journal={Michigan Math. J.},
   volume={52},
   date={2004},
   number={2},
   pages={453--479},
   issn={0026-2285},
   review={\MR{2069810}},
   doi={10.1307/mmj/1091112085},
}

%\bib{krause}{article}{
%   author={Krause, Henning},
%   title={Stable equivalence preserves representation type},
%   journal={Comment. Math. Helv.},
%   volume={72},
%   date={1997},
%   number={2},
%   pages={266--284},
%   issn={0010-2571},
%   review={\MR{1470092}},
%}

\bib{lor-wal}{article}{
   author={L\H{o}rincz, Andr\'as C.},
   author={Walther, Uli},
   title={On categories of equivariant {$D$}-modules},
   journal={preprint},
   note={Available at \url{http://www.math.purdue.edu/~alorincz/}},
   date={2017},
}

\bib{lyubeznik}{article}{
   author={Lyubeznik, Gennady},
   title={Finiteness properties of local cohomology modules (an application
   of $D$-modules to commutative algebra)},
   journal={Invent. Math.},
   volume={113},
   date={1993},
   number={1},
   pages={41--55},
   issn={0020-9910},
   review={\MR{1223223}},
   doi={10.1007/BF01244301},
}

\bib{martinez}{article}{
   author={Mart\'\i nez-Villa, Roberto},
   title={Algebras stably equivalent to $l$-hereditary},
   conference={
      title={Representation theory, II},
      address={Proc. Second Internat. Conf., Carleton Univ., Ottawa, Ont.},
      date={1979},
   },
   book={
      series={Lecture Notes in Math.},
      volume={832},
      publisher={Springer, Berlin},
   },
   date={1980},
   pages={396--431},
   review={\MR{607166}},
}

\bib{mac-vil}{article}{
   author={MacPherson, Robert},
   author={Vilonen, Kari},
   title={Elementary construction of perverse sheaves},
   journal={Invent. Math.},
   volume={84},
   date={1986},
   number={2},
   pages={403--435},
   issn={0020-9910},
   review={\MR{833195 (87m:32028)}},
   doi={10.1007/BF01388812},
}

\bib{mac-vil-cusp}{article}{
   author={MacPherson, R.},
   author={Vilonen, K.},
   title={Perverse sheaves with singularities along the curve $y^n=x^m$},
   journal={Comment. Math. Helv.},
   volume={63},
   date={1988},
   number={1},
   pages={89--102},
   issn={0010-2571},
   review={\MR{928029}},
}

\bib{lyub-nmbs}{article}{
   author={N\'u\~nez-Betancourt, Luis},
   author={Witt, Emily E.},
   author={Zhang, Wenliang},
   title={A survey on the Lyubeznik numbers},
   conference={
      title={Mexican mathematicians abroad: recent contributions},
   },
   book={
      series={Contemp. Math.},
      volume={657},
      publisher={Amer. Math. Soc., Providence, RI},
   },
   date={2016},
   pages={137--163},
   review={\MR{3466449}},
}

\bib{raicu-matrices}{article}{
   author={Raicu, Claudiu},
   title={Characters of equivariant $\mathcal{D}$-modules on spaces of matrices},
   journal={Compos. Math.},
   volume={152},
   date={2016},
   number={9},
   pages={1935--1965},
   issn={0010-437X},
   review={\MR{3568944}},
   doi={10.1112/S0010437X16007521},
}

\bib{raicu-veronese}{article}{
   author={Raicu, Claudiu},
   title={Characters of equivariant $\D$-modules on Veronese cones},
   journal={Trans. Amer. Math. Soc.},
   volume={369},
   date={2017},
   number={3},
   pages={2087--2108},
   issn={0002-9947},
   review={\MR{3581228}},
}

\bib{raicu-weyman}{article}{
   author={Raicu, Claudiu},
   author={Weyman, Jerzy},
   title={Local cohomology with support in generic determinantal ideals},
   journal={Algebra \& Number Theory},
   volume={8},
   date={2014},
   number={5},
   pages={1231--1257},
   issn={1937-0652},
   review={\MR{3263142}},
   doi={10.2140/ant.2014.8.1231},
}

\bib{raicu-weyman-loccoh}{article}{
   author={Raicu, Claudiu},
   author={Weyman, Jerzy},
   title={Local cohomology with support in ideals of symmetric minors and
   Pfaffians},
   journal={J. Lond. Math. Soc. (2)},
   volume={94},
   date={2016},
   number={3},
   pages={709--725},
   issn={0024-6107},
   review={\MR{3614925}},
   doi={10.1112/jlms/jdw056},
}

\bib{raicu-weyman-witt}{article}{
   author={Raicu, Claudiu},
   author={Weyman, Jerzy},
   author={Witt, Emily E.},
   title={Local cohomology with support in ideals of maximal minors and
   sub-maximal Pfaffians},
   journal={Adv. Math.},
   volume={250},
   date={2014},
   pages={596--610},
   issn={0001-8708},
   review={\MR{3122178}},
   doi={10.1016/j.aim.2013.10.005},
}

\bib{b-functions}{article}{
   author={Sato, M.},
   author={Kashiwara, M.},
   author={Kimura, T.},
   author={\=Oshima, T.},
   title={Microlocal analysis of prehomogeneous vector spaces},
   journal={Invent. Math.},
   volume={62},
   date={1980/81},
   number={1},
   pages={117--179},
   issn={0020-9910},
   review={\MR{595585}},
}

\bib{sim-sko2}{book}{
   author={Simson, Daniel},
   author={Skowro\'nski, Andrzej},
   title={Elements of the representation theory of associative algebras. Vol. 2. Tubes and concealed algebras of {E}uclidean type},
   series={London Mathematical Society Student Texts},
   volume={71},
   %note={Tubes and concealed algebras of Euclidean type},
   publisher={Cambridge University Press, Cambridge},
   date={2007},
   pages={xii+308},
   isbn={978-0-521-54420-7},
   review={\MR{2360503}},
}

\bib{sim-sko3}{book}{
   author={Simson, Daniel},
   author={Skowro\'nski, Andrzej},
   title={Elements of the representation theory of associative algebras. Vol. 3. Representation-infinite tilted algebras},
   series={London Mathematical Society Student Texts},
   volume={72},
  % note={Representation-infinite tilted algebras},
   publisher={Cambridge University Press, Cambridge},
   date={2007},
   pages={xii+456},
   isbn={978-0-521-70876-0},
   review={\MR{2382332}},
}

\bib{VdB:loccoh}{article}{
   author={Van den Bergh, Michel},
   title={Local cohomology of modules of covariants},
   journal={Adv. Math.},
   volume={144},
   date={1999},
   number={2},
   pages={161--220},
   issn={0001-8708},
   review={\MR{1695237 (2000d:14051)}},
   doi={10.1006/aima.1998.1809},
}

\bib{vilonen}{article}{
   author={Vilonen, K.},
   title={Perverse sheaves and finite-dimensional algebras},
   journal={Trans. Amer. Math. Soc.},
   volume={341},
   date={1994},
   number={2},
   pages={665--676},
   issn={0002-9947},
   review={\MR{1135104}},
}

		\end{biblist}
	\end{bibdiv}

\end{document}